\theoremstyle{plain}
  \newtheorem{theorem}[subsection]{Theorem}
  \newtheorem{proposition}[subsection]{Proposition}
  \newtheorem{lemma}[subsection]{Lemma}
  \newtheorem{corollary}[subsection]{Corollary}
\def\bphi{{\boldsymbol{\phi} }}
\def\bpsi{{\boldsymbol{\psi} }}
\def\F{{\mathcal F}}
\def\R{{\mathbb{R}}}
\def\C{{\mathbb{C}}}
\def\I{{\mathcal I}}
\def\A{{\mathcal A}}
\def\B{{\mathcal B}}
\def\P{{\mathcal P}}
\def\Q{{\mathcal Q}}
\def\H{{\mathcal H}}
\def\N{{\mathcal N}}
\def\M{{\mathcal M}}
\def\E{{\mathcal E}}
\def\V{{\mathcal V}}
\def\ch{\mbox{ch} (k)}
\def\chbb{\mbox{chb} (k)}
\def\trigh{\mbox{trigh} (k)}
\def\sh{\mbox{sh} (k)}
\def\shh{\mbox{sh}}
\def\chh{\mbox{ch}}
\def\shhb{\overline {\mbox{sh}}}
\def\th{\mbox{th} (k)}
\def\chhb{\overline {\mbox{  ch}}}
\def\shbb{\mbox{shb} (k)}
\def\chb{\overline{\mbox{\rm ch}  (k)}}
\def\shb{\overline{\mbox{sh}(k)}}
\def\z{\zeta}
\def\zb{\overline{\zeta}}
\def\FF{{\mathbb{F}}}
\def\L{\Lambda}
\def\phib{\overline{\phi}}
\theoremstyle{remark}
  \newtheorem{remark}[subsection]{Remark}
\theoremstyle{definition}
  \newtheorem{definition}[subsection]{Definition}
\begin{document}

\def\vac{{\big\vert 0\big>}}
\def\fpsi{{\big\vert\psi\big>}}
\def\bphi{{\boldsymbol{\phi} }}
\def\bpsi{{\boldsymbol{\psi} }}
\def\F{{\mathcal F}}
\def\R{{\mathbb{R}}}
\def\C{{\mathbb{C}}}
\def\I{{\mathcal I}}
\def\Q{{\mathcal Q}}
\def\ch{\mbox{\rm ch} (k)}
\def\cht{\mbox{\rm ch} (2k)}
\def\chbb{\mbox{\rm chb} (k)}
\def\trigh{\mbox{\rm trigh} (k)}
\def\p{\mbox{p} (k)}
\def\sh{\mbox{\rm sh} (k)}
\def\sht{\mbox{\rm sh} (2k)}
\def\shh{\mbox{\rm sh}}
\def\chh{\mbox{ \rm ch}}
\def\th{\mbox{\rm th} (k)}
\def\thb{\overline{\mbox{\rm th} (k)}}
\def\shhb{\overline {\mbox{\rm sh}}}
\def\chhb{\overline {\mbox{\rm ch}}}
\def\shbb{\mbox{\rm shb} (k)}
\def\chbt{\overline{\mbox{\rm ch}  (2k)}}
\def\shb{\overline{\mbox{\rm sh}(k)}}
\def\shbt{\overline{\mbox{\rm sh}(2k)}}
\def\z{\zeta}
\def\zb{\overline{\zeta}}
\def\FF{{\mathbb{F}}}
\def\S{{\bf S}}
\def\Sr{{\bf S}_{red}}
\def\W{{\bf W}}
\def\WW{{\mathcal W}}
\def\N{{\bf N}}
\def\L{{\mathcal L}}
\def\E{{\mathcal E}}
\def\X{{\tilde X}}
\def\Nor{{\rm Nor}}

\title[Pair excitations, II]%
{ Pair excitations and the mean field approximation of interacting Bosons, II}

\author{M. Grillakis}
\address{University of Maryland, College Park}
\email{mng@math.umd.edu}

\author{M. Machedon}
\address{University of Maryland, College Park}
\email{mxm@math.umd.edu}

\subjclass{}
\keywords{}
\date{}
\dedicatory{}
\commby{}

\maketitle

\begin{abstract}
We consider a large number of Bosons with interaction potential $v_N(x)=N^{3 \beta}v(N^{\beta}x)$.
In our earlier papers \cite{GMM1}-\cite{GM} we considered a set of equations for the condensate
$\phi$ and pair excitation function $k$
and proved that they provide a Fock space approximation to the exact evolution of a coherent state for $\beta <\frac{1}{3}$.
In \cite{GMM}, in the hope of treating higher values of $\beta <1$, we introduced a coupled refinement of our original equations. In that paper, we showed the coupled equations conserve the number of particles and energy.
In the current paper we prove that the coupled equations do indeed provide a Fock space approximation  for $\beta < \frac{2}{3}$, at least locally in time.
In order to do that, we re-formulate the equations of \cite{GMM} in a way reminiscent of BBGKY and apply harmonic analysis techniques in the spirit of those used by X. Chen and J. Holmer in \cite{C-H1}
to prove the necessary estimates. In turn, these estimates provide bounds for the pair excitation function $k$.
While our earlier papers provide background material, the  methods of this paper paper are mostly new, and the presentation is self-contained.
\end{abstract}

\section{Introduction}

The problem considered in this paper (as well as our earlier papers \cite{GMM1}- \cite{GMM}) is  the $N$-body linear Schr\"odinger equation
\begin{align}
&\left(\frac{1}{i} \frac{\partial}{\partial t}
-\sum_{j=1}^N \Delta_{x_j}+\frac{1}{N}\sum_{i<j}v_N(x_i-x_j)\right)\psi_N(t, \cdot)=0
  \label{schl}\\
&\psi_N (0, x_1, \cdots, x_N) \sim \phi_0( x_1) \phi_0( x_2) \cdots \label{sim}
\phi_0( x_N)\\
&  \|\psi_N(t, \cdot)\|_{ L^2(\mathbb R^{3N})}=1 \notag
\end{align}
where $v_{N}(x):=N^{3\beta}v(N^{\beta}x)$ with $0\leq\beta\leq 1$,
$v\in \mathcal S$ and $v\geq 0$. The meaning of $\sim$ in \eqref{sim} will  be made precise.

The  goal is to find a rigorous, simple  approximation (in a suitable norm) to $\psi_N$ which is consistent with
\begin{align}
\psi_{approx} (t, x_1, \cdots, x_N) \sim e^{i \chi(t)}\phi(t, x_1) \phi(t, x_2) \label{manyN} \cdots
\phi(t, x_N)
\end{align}
as $N \to \infty$,
where
$\phi$ (which represents the Bose-Einstein condensate)  satisfies a  non-linear Schr\"odinger equation.
The problem becomes  more difficult, interesting, and requires new ideas as $\beta$ approaches 1,
and this explains why several authors, (including us) devoted several papers to  their programs.

We refer to \cite{lieb05} for extensive background on (static) Bose-Einstein condensation.

During recent years, in a series of papers by
 Erd\"os and Yau \cite{E-Y1}, and
 Erd\"os, Schlein and Yau \cite{E-S-Y1} to \cite{E-S-Y4}, it was proved
 \begin{align}
\gamma_1^N(t, x, x') \to \overline \phi(t, x){\phi}(t, x') \label{ESY}
\end{align}
in trace norm as $N \to \infty$, and similarly for the higher order marginal density matrices $\gamma_k^N$, where $k$ is fixed.
 Recent simplifications and generalizations
 were given in \cite{K-MMM}, \cite{KSS}, \cite{CP}, \cite{XC3},     \cite{C-H1}, \cite{C-H2}. See also
 \cite{FKS}, \cite{K-P} for a different approach.

 We also mention the approach based on the quantum de Finetti theorem in
 \cite{CHNS}, as well as results in the case of negative interaction potentials
 in \cite{XC-H-3} and the different approach of  Knowles and Pickl \cite{K-P}.

Another approach to this problem is based on Fock space techniques and the second quantization. In physics, it was pioneered
in the  papers by Bogoliubov \cite{Bog47},
 Lee, Huang and Yang \cite{Lee-H-Y} in the static case, and Wu \cite{wuI} in the time dependent case. See also the more recent papers  \cite{MargetisI}, \cite{margetisII}.

 In the rigorous mathematical literature devoted to the time evolution problem, it originates in the of work  Hepp \cite{hepp},  Ginibre and Velo \cite{G-V}
and, after lying dormant for about 30 years,   Rodnianski and Schlein  \cite{Rod-S}, followed by
\cite{GMM1}.
 Currently, it is an active field.

Our project, initiated in collaboration with Margetis in \cite{GMM1}, is to study a PDE describing additional second order corrections (given by a Bogoliubov transformation  $e^{\B}$) to the right hand side of the approximation \eqref{manyN}.
Mathematically, Boguliubov transformations are representations of a group isomorphic to a real symplectic group, corresponding to the  Segal-Shale-Weil  representation in infinite dimensions, due to Shale. Interestingly, the theories seem to have evolved independently
in physics and pure mathematics.

 Several important recent papers also use coherent states and Bogoliubov transformations. These include \cite{B-O-S} and  \cite{B-K-S}. In fact, Theorem 2.2 in \cite{B-K-S} proves that the unitary operators of the type used in \cite{hepp} and \cite{G-V} can be obtained, abstractly, as  Bogoliubov transformations. We mention in passing that our concrete\footnote{We mean that $\B =\B(k)$ where $k$ satisfies a PDE in 6+1 dimensions and is, in principle, computable numerically .} in $e^{\B}$ (to be discussed below)  agrees with that Bogoliubov transformation (up to a phase), but only when applied to the vacuum.
 Roughly speaking, our $e^{\B}$ is not the operator corresponding to the evolution of the quadratic Bogoliubov Hamiltonian, but rather diagonalizes it.

In the current paper we initiate the analysis of solutions to a coupled system of PDEs for the condensate $\phi$ and pair
excitation $k$, see\eqref{pphieq},
\eqref{sseq}-\eqref{wweq} below.
It would be very interesting to obtain estimates  up to the case $\beta <1 $, as well as large time estimates for the solutions of these equations which are uniform in $N$, similar to those
obtained in earlier works for the uncoupled equations (\eqref{meanfield-3}-\eqref{pair-W-1} below) which describe the case $\beta< 1/3$. We hope to address these question in future work. In this paper we only consider the case $1/3<\beta < 2/3$, locally in time. As pointed out by one of our referees, it is unlikely that this type of construction, based on coherent states and Bogoliubov transformations, will work in the case $\beta = 1$. This is because the construction is not sufficient to capture the ground state energy for the many body (static) case. The references for this phenomenon are \cite{ESYstatic}, \cite{Y-Y}.

See also the paper by Lewin,   Nam, and  Schlein
 \cite{LPS} for a Fock space type approach to an $L^2(\mathbb R^{3N})$ estimate based on corrections to a pure
 tensor product (Hartree state) rather than a coherent state. That approach has been generalized very recently to the case $\beta < 1/3$
 by Nam and Napiorkowski
 in \cite{N-N},
 and their work uses the linear equations \eqref{pair-S-1}, \eqref{pair-W-1} introduced in \cite{GM}, as well as some of the estimates from that paper.

Very recently, after completing this work, we have also learned of the related  paper \cite{BCS}. A brief comparison of the results of that paper with ours is included at the end next section.

 Also very recently we  learned  that,
independently and in a different framework, Bach, Breteaux, Chen, Fr\"ohlich and Sigal derived equations closely related to
the equations of \cite{GMM} and section \eqref{self-contained} of our current  paper
in their recent work \cite{BBCFS}. Those equations become equivalent to ours
in the case of pure states.

\section{Background and statement of the main result}

We start with a very brief review of symmetric Fock space. This is included for the convenience of the reader, and follows closely
the exposition from our earlier papers.
We refer the reader to \cite{GMM} for more details and comments.   The elements of
$\F$ are  vectors of the form
\begin{equation*}
\psi=\big(\psi_{0}\ ,\ \psi_{1}(x_{1})\ ,\ \psi_{2}(x_{1},x_{2})\ ,\ \ldots\ \big)
\end{equation*}
where $\psi_{0}\in \C$ and $\psi_k$ are symmetric $L^2$ functions. The inner product is
\begin{equation*}
\big<\phi, \psi\big> =\overline \phi_0 \psi_{0}+
\sum_{n=1}^{\infty}\int \overline \phi_n \psi_n\ .
\end{equation*}
Thus we use physicists' convention of an inner product linear in the {\it second} variable.
The creation and annihilation distribution valued operators denoted by $a^{\ast}_{x}$ and
$a_{x}$ respectively which act on vectors of the form $(0, \cdots, \psi_{n-1}, 0,  \cdots)$ and
$(0, \cdots, \psi_{n+1}, 0,  \cdots)$
by
\begin{align*}
&a^{\ast}_{x}(\psi_{n-1}):=\frac{1}{\sqrt{n}}\sum_{j=1}^{n}\delta(x-x_{j})
\psi_{n-1}(x_{1},\ldots ,x_{j-1},x_{j+1},\ldots ,x_{n})
\\
&a_{x}(\psi_{n+1}):=\sqrt{n+1}\psi_{n+1}([x],x_{1},\ldots ,x_{n})
\end{align*}
with $[x]$ indicating that the variable $x$ is frozen.
The vacuum state is defined as follows:
\begin{equation*}
\Omega :=(1,0,0\ldots )
\end{equation*}
 One can easily check the canonical relations $\big[a_{x},a^{\ast}_{y}\big]=\delta(x-y)$ and since
the creation and annihilation operators are distribution valued we can form operators that act on $\F$ by introducing
a field, say $\phi(x)$, and form
\begin{align*}
a (\bar{\phi}):=\int dx\left\{\bar{\phi}(x)a_{x}\right\}
\quad {\rm and}\quad
a^{\ast} (\phi):=\int dx\left\{\phi(x)a^{\ast}_{x}\right\}
\end{align*}
where by convention we associate $a$ with $\bar{\phi}$ and  $a^{\ast}$ with $\phi$.
Also define the skew-Hermitian operator
\begin{equation}
\A(\phi):=\int dx\left\{\bar{\phi}(x)a_{x}-\phi(x)a^{\ast}_{x}\right\} \label{meanfield-2}
\end{equation}
and coherent states
\begin{equation}\label{cohstates}
\psi:=e^{-\sqrt{N}\A(\phi)}\Omega \ .
\end{equation}
It is easy to check that
\begin{equation*}
e^{-\sqrt{N}\A(\phi)}\Omega =\left(\ldots\ c_{n}\prod_{j=1}^{n}\phi(x_{j})\ \ldots\right)
\quad {\rm with}\quad  c_{n}=\big(e^{-N \|\phi\|^2_{L^2}}N^{n}/n!\big)^{1/2}\ .
\end{equation*}

We also consider
\begin{align}
&\B(k):=
\frac{1}{2}\int dxdy\left\{\bar{k}(t,x,y)a_{x}a_{y}
-k(t,x,y)a^{\ast}_{x}a^{\ast}_{y}\right\}\ . \label{pairexcit-2}
\end{align}

This particular construction and the corresponding unitary operator $\M:=e^{-\sqrt{N}\A}e^{-\B}$  were introduced
(at least in the mathematics literature related to the problem under consideration)
in \cite{GMM1}.
The construction is in the spirit of Bogoliubov theory in physics, and  the Segal-Shale-Weil representation in mathematics.

The Fock Hamiltonian  is
\begin{subequations}
\begin{align}
&\H:=\H_{1}-\frac{1}{N}\V\quad\quad\quad   {\rm where,} \label{FockHamilt1-a}\\
&\H_{1}:=\int dxdy\left\{
\Delta_{x}\delta(x-y)a^{\ast}_{x}a_{y}\right\}\quad {\rm and} \label{FockHamilt1-b}\\
&\V:=\frac{1}{2}\int dxdy\left\{v_{N}(x-y)a^{\ast}_{x}a^{\ast}_{y}a_{x}a_{x}\right\}\ ,
\label{FockHamilt1-c}
\end{align}
\end{subequations}
where $v_N (x)=N^{3\beta}v\big(N^{\beta}x\big)$.
It is a diagonal operator on Fock space, and it acts as a differential operator in $n$ variable
\begin{align*}
H_{n, \, \, PDE}=\sum_{j=1}^{n}\Delta_{x_{j}} - \frac{1}{N}
\sum_{i<j}N^{3\beta}v\big(N^{\beta}(x_{j}-x_{k})\big)
\end{align*}
 on the $n$th component of $\F$. Notice this is the same as \eqref{schl}, except that the dimension $n$ is decoupled from
 the parameter $N$.

Our goal is to study the evolution of (possibly modified) coherent initial conditions of the form
\begin{align}
 \psi_{exact}=e^{i t \H}e^{-\sqrt{N}\A(\phi_{0})}e^{-\B(k_{0})}\Omega \label{exact}
 \end{align}
 In our earlier papers\cite{GMM1, GMM2, GM, GMM} we considered an approximation of the form
\begin{equation}\label{approx}
\psi_{appr}:=e^{-\sqrt{N}\A(\phi(t)}e^{-\B(k(t))}\Omega
\end{equation}
and derived suitable Schr\"odinger type equations equations for $\phi(t, x)$, $k(t, x, y)$
so that
 $\psi_{exact}(t)\approx e^{iN\chi(t)}\psi_{appr}(t)$,
 with $\chi(t)$ a real phase factor, in order to find
  precise estimates in Fock space, see Theorem \eqref{GMthm} below.
  Our strategy is to consider
  \begin{align*}
  \psi_{red}=e^{\B(t)}e^{\sqrt{N}\A(t)}e^{it\H}e^{-\sqrt{N}\A(0)}e^{-\B(0)}\Omega
  \end{align*}
 and then  compute a "reduced Hamiltonian"
 \begin{align}
 H_{red} = \frac{1}{i}\big(\partial_{t}\M^{\ast}\big)\M +\M^{\ast}\H\M \label{reduced}
 \end{align}
 so that
\begin{equation}
\frac{1}{i}\partial_{t}\psi_{red}=
\H_{red}\psi_{red} \ .
\end{equation}
To state the results of \cite{GMM1, GMM2, GM} we define the operator kernel
\begin{align}
&g_N(t, x, y):= -\Delta_x \delta (x-y)
 +(v_N * |\phi|^2 )(t, x) \delta(x-y) \nonumber \\
&\qquad\qquad\  +v_N(x-y) \overline\phi (t, x)  \phi(t, y)\label{op-g}
\end{align}
and
\begin{align}
&{\bf S}_{old}(s):= \frac{1}{i} s_t  + g_N^T \circ s + s \circ g_N\quad {\rm and} \notag\\
&{\bf W}_{old}(p):= \frac{1}{i} p_t  +[g_N^T, p]\label{wold}\\
&m_N(x, y) :=- v_N(x-y) \phi(x) \phi(y) \, \, ,
v_N(x)= N^{3 \beta} v(N^{\beta}x)\notag
\end{align}
The main result of \cite{GM}  can be summarized as follows:
\begin{theorem}\label{GMthm} Let $\phi$ and $k$ satisfy
\begin{subequations}
\begin{align}
&\frac{1}{i}\partial_{t}\phi -\Delta\phi +\big(v_{N}\ast\vert\phi\vert^{2}\big)\phi =0 \label{meanfield-3}
\\
&   {\bf S}_{old}\left(\sht\right)=m_{N} \circ \cht + \chbt \circ m_{N}  \label{pair-S-1} \\
&   {\bf W}_{old}\left(\chbt\right)= m_{N} \circ \shbt- \sht \circ \overline m_{N}\ . \label{pair-W-1}
\end{align}
\end{subequations}
with prescribed initial conditions $\phi(0, \cdot)= \phi_0$, $k(0, \cdot, \cdot)=0$. If $\phi$, $k$ satisfy the above equations, then there exists a real phase function $\chi$ such that
\begin{equation}\label{mainestim}
\big\Vert\psi_{exact}(t)-
e^{iN\chi(t)}\psi_{appr}(t)\big\Vert_{\F}\leq \frac{C (1+t) \log^4(1+t)}{N^{(1-3\beta)/2}}\ .
\end{equation}
provided $0 < \beta < \frac{1}{3}$.
\end{theorem}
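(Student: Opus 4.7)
The plan is to implement the reduction outlined right after \eqref{reduced}: compute $\H_{red}$ explicitly, choose the modulation $(\phi, k, \chi)$ so that $\H_{red}\Omega$ has no scalar, linear, or pair-creation components, and bound the remaining cubic and quartic residues in Fock norm. The starting point is to conjugate $\H$ and $\tfrac{1}{i}(\partial_t\M^*)\M$ through $\M=e^{-\sqrt N\A(\phi)}e^{-\B(k)}$ using the Bogoliubov identities
\begin{align*}
e^{\sqrt N\A(\phi)} a_x e^{-\sqrt N\A(\phi)} &= a_x+\sqrt N\,\phi(x),\\
e^{\B(k)}a_xe^{-\B(k)} &= \int\!\bigl(\chh(k)(x,y)\,a_y-\shhb(k)(x,y)\,a^*_y\bigr)dy,
\end{align*}
and the adjoint. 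Substituting into $\H=\H_1-\tfrac{1}{N}\V$ and collecting terms by monomial degree in $(a,a^*)$ produces a decomposition $\H_{red}=N\mu(t)+\mathcal L+\mathcal Q+\E_3+\E_4$, where $\mu$ is a real scalar, $\mathcal L$ is linear, $\mathcal Q$ is quadratic, $\E_3$ is the $1/\sqrt N$-weighted cubic piece coming from $\V$ cross terms, and $\E_4=-\tfrac{1}{N}\V$ conjugated. The coefficients are explicit polynomials in $\phi,k,v_N$ and their derivatives.

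Next I would choose $(\phi, k, \chi)$ to eliminate the three lower-order pieces when acting on $\Omega$. Setting the coefficient of $\mathcal L$ to zero produces the Hartree equation \eqref{meanfield-3}; setting the pair-creation part of $\mathcal Q$ to zero yields, via the addition formulas $\shh(2k)=2\shh(k)\chh(k)$ and $\chh(2k)=\chh^2(k)+\shh^2(k)$ together with the definitions \eqref{op-g}-\eqref{wold}, precisely the system \eqref{pair-S-1}-\eqref{pair-W-1} for the double-angle kernels; and integrating $\dot\chi(t)=-\mu(t)$ absorbs the scalar. The diagonal part of $\mathcal Q$ annihilates $\Omega$ and contributes only to the propagator. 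Setting $e(t):=\psi_{red}(t)-e^{iN\chi(t)}\Omega$, one then verifies
\begin{equation*}
\tfrac{1}{i}\partial_t e=\H_{red}\,e+e^{iN\chi(t)}(\E_3+\E_4)\Omega,
\end{equation*}
and since $\H_{red}$ is essentially self-adjoint on $\F$, Duhamel reduces the theorem to the bound $\int_0^t\!\|(\E_3+\E_4)\Omega\|_\F\,ds\lesssim (1+t)\log^4(1+t)\,N^{-(1-3\beta)/2}$.

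The cubic contribution $\E_3\Omega$ has $L^2$ density schematically of the form $\tfrac{1}{\sqrt N}v_N(x-y)\phi(x)\shhb(k)(x,y)$ together with companions; by Young's inequality combined with uniform Sobolev/Strichartz bounds on $\phi$ and $\shh(k)$, using $\|v_N\|_{L^1}=O(1)$ or $\|v_N\|_{L^{3/2}}=O(N^\beta)$, its Fock norm is $O(N^{-(1-3\beta)/2})$. The quartic contribution $\E_4\Omega$ gains an extra $1/N$ but loses $N^{3\beta}$ from $\|v_N\|_{L^\infty}$, yielding the same rate after one Cauchy-Schwarz against the number estimate $\langle\N\Omega_{red},\Omega_{red}\rangle\lesssim \|\shh(k)\|_{L^2}^2$. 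The logarithmic factor $\log^4(1+t)$ arises from iterating a logarithmic loss in the two-body Strichartz estimate at each Duhamel step.

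The main obstacle is producing uniform-in-$N$ Sobolev and dispersive bounds on the pair-excitation kernel $k(t,x,y)$ as the interaction $v_N$ concentrates. Because the forcings $m_N\circ\chh(2k)$ and $m_N\circ\shhb(2k)$ in \eqref{pair-S-1}-\eqref{pair-W-1} involve the singular kernel $v_N(x-y)\phi(x)\phi(y)$, naive $H^1$ energy estimates on $\R^{6+1}$ diverge in $N$; one has to exploit the smoothing of the two-particle kinetic semigroup together with the fact that $v_N$ pairs only against smooth quantities built from $\phi$. This is precisely where the restriction $\beta<1/3$ enters the bookkeeping and breaks down at the threshold, motivating the coupled refinement analyzed in the present paper.
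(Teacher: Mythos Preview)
This theorem is not proved in the present paper; it is quoted from \cite{GM} as background, and the paper says only ``See \cite{GM} for the reasons behind these equations.'' So there is no in-paper proof to compare against, and your outline should be read against \cite{GM} rather than the current text.

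That said, your strategy is the right one and matches \cite{GM}: conjugate through $\M$, arrange the equations \eqref{meanfield-3}--\eqref{pair-W-1} so that $X_1=X_2=0$, absorb the scalar into the phase, and reduce via Duhamel to bounding $\int_0^t\|(0,0,0,X_3,X_4,0,\ldots)\|_\F\,ds$. A few details in your sketch are off, however. First, $\E_3\Omega=X_3$ is a \emph{three}-particle function, of the schematic form $\tfrac{1}{\sqrt N}v_N(y_1-y_2)\phi(y_2)\shh(k)(y_3,y_1)$ (cf.\ the formulas for $X_3,X_4$ in Section~\ref{fockerror}), not the two-variable density you wrote; the relevant norm is $\|v_N\|_{L^2}=O(N^{3\beta/2})$, not $\|v_N\|_{L^1}$ or $\|v_N\|_{L^{3/2}}$, and that is what produces $\|X_3\|_{L^2}=O(N^{(3\beta-1)/2})$. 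Second, your treatment of $\E_4\Omega$ via $\|v_N\|_{L^\infty}$ and a ``number estimate against $\Omega_{red}$'' is muddled: $X_4$ is a four-particle $L^2$ function and one computes directly $\|X_4\|_{L^2}=O(N^{3\beta/2-1})$, which is strictly smaller than the cubic term for $\beta<1/3$. Third, the $\log^4(1+t)$ factor does not come from ``iterating a logarithmic loss in the two-body Strichartz estimate at each Duhamel step''; in \cite{GM} it is the time growth of the a~priori bounds on $\shh(k)$ and $p$ (obtained from the uncoupled pair-excitation equations) that feeds into $\|X_3\|_{L^2}$ and $\|X_4\|_{L^2}$ and is then integrated once.

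The genuine analytic content, which you correctly flag in your last paragraph, is the uniform-in-$N$ control of $\shh(k)$ and $p=\chh(k)-\delta$ from \eqref{pair-S-1}--\eqref{pair-W-1}; everything else is algebra plus a single energy (Duhamel) estimate. Your outline would pass as a summary of \cite{GM}, but the norm bookkeeping and the source of the time growth should be corrected as above.
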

See \cite{GM} for the reasons behind these equations. We also mention a very recent simple derivation of these equations in
\cite{N-N}.
This result was extended to the case $\beta<\frac{1}{2}$ in \cite{elif2}, where it was also argued informally that the equations of \cite{GM} do not provide an approximation for $\beta>\frac{1}{2}$.

In the hope of obtaining an approximation for higher $  \beta $, in \cite{GMM} we introduced a coupled refinement of the system \eqref{meanfield-3}, \eqref{pair-S-1}, \eqref{pair-W-1}.

The coupled equations of  \cite{GMM} were introduced
the following way:
Since $\H_{red}$ is a fourth order polynomial in $a$ and $a^*$,
\begin{align}
\H_{red} \Omega =(X_0, X_1, X_2, X_3, X_4, 0, \cdots). \label{Xidef}
\end{align}

 The new, coupled equations for $\phi$ and $k$ that we introduce in \cite{GMM} can be written abstractly as
 \begin{align}
  X_1=0 \, \, \mbox{ and} \, \, X_2=0.\label{abstract}
  \end{align}
 It was shown there that they are Euler-Lagrange equations for the Lagrangian density  $X_0$, and that their solutions preserve
 the number of particles and the energy. See also \cite{BBCFS}.

\begin{remark} The static terms of $X_0(t)$ (not involving time derivatives) also appear in the recent paper
\cite{B-O-S}, but do not serve as a Lagrangian there.
\end{remark}

To write down the equations \eqref{abstract} explicitly in terms of $\phi$ and $k$,
we introduce
\begin{definition} Define
\begin{align}
&\Lambda(t, x_1, x_2)=\frac{1}{2N}\sht(t, x_1, x_2) +\phi(t, x_1)\phi(t,  x_2) \label{Lambda2}
\\
&\Gamma(t, x_1, x_2)=\frac{1}{N}\left(\shb \circ \sh\right)(t, x_1, x_2)+\overline{\phi}(t, x_1)\phi(t, x_2)
\label{Gamma2}
\end{align}
\end{definition}
 and the new operator kernel
\begin{align}
&\tilde g_{N}(t, x, y):= -\Delta_x \delta (x-y)
+v_N(x-y) (tr \Gamma)(t, x) \delta(x-y) \notag\\
 &+v_N(x-y) \Gamma(t, x, y) \label{gn}
\end{align}
 where $tr$ denotes trace density, and define
\begin{align}
&\tilde{\bf S}(s):= \frac{1}{i} s_t  + \tilde g_{N}^T \circ s + s \circ \tilde g_{N}\quad {\rm and}\quad \label{deftilde}
\tilde{\bf W}(p):= \frac{1}{i} p_t  +[\tilde g_{N}^T, p]
\end{align}

In this notation, the following is proved in \cite{GMM}
\begin{theorem} \label{mainthm}
The equation $X_1=0$ is equivalent to
\begin{align}
&\frac{1}{i}\partial_{t}\phi(t, x) -\Delta\phi +\int v_N(x-y)\Lambda(t, x, y) \overline \phi(t, y) dy \notag\\
&+\frac{1}{N} (v_N * Tr (\sh \circ \shb)  )(t, x)\phi(t, x) \label{pphieq}\\
&+ \frac{1}{N} \int  v_N(x-y) (\sh \circ \shb) (t, x, y)\phi(t, y) dy=0 \notag
\end{align}
Here $Tr (\sh \circ \shb)  (t, x)= (\sh \circ \shb)(t, x, x)$ denotes the trace density.

The equation $X_2=0$ is equivalent to
\begin{subequations}
\begin{align}
\tilde{\S}\left(\sht\right)+ (v_N \Lambda) \circ \cht + \chbt \circ (v_N \Lambda) =0 \label{sseq}\\
\tilde{\W}\left(\chbt\right)+(v_N \Lambda) \circ \shbt- \sht \circ (\overline {v_N \Lambda})=0 \label{wweq}
\end{align}
\end{subequations}
\end{theorem}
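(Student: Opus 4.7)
The plan is to compute $\H_{red}\Omega$ explicitly, read off its Fock-space components from the decomposition \eqref{Xidef}, and match the coefficients of $a^*_x\Omega$ and $a^*_x a^*_y\Omega$ to the stated equations. Since $\H_{red} = \frac{1}{i}(\partial_t \M^*)\M + \M^*\H\M$ with $\M = e^{-\sqrt{N}\A(\phi)}e^{-\B(k)}$, and both $\A$ and $\B$ are skew-Hermitian, the substantive work is the algebraic expansion of the operator $e^{\B}e^{\sqrt{N}\A}\H e^{-\sqrt{N}\A}e^{-\B}$ applied to $\Omega$, together with the analogous expansion of the time-derivative term.

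First I would conjugate $\H = \H_1 - \frac{1}{N}\V$ by $e^{\sqrt{N}\A(\phi)}$ using the Weyl-type identity $e^{\sqrt{N}\A}a_x e^{-\sqrt{N}\A} = a_x + \sqrt{N}\phi(x)$ and its adjoint, turning $\H$ into a polynomial of degree at most $4$ in $a, a^*$ with coefficients built from $\phi, \bar\phi, v_N$, and $-\Delta$. Next I would conjugate each such monomial by $e^{\B(k)}$ via the Bogoliubov formula $e^{\B}a_x e^{-\B} = \int \cht(x,y)\, a_y\, dy + \int \sht(x,y)\, a_y^*\, dy$ (the factor $2$ in the arguments descending from the $\tfrac12$ in the definition of $\B$), and its dual. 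Applying the result to $\Omega$, monomials ending in an $a$ annihilate the vacuum, so the surviving contractions of creation/annihilation pairs reduce to finite sums of products built from $\cht, \sht, \chbt, \shbt$ contracted against $v_N$ and $\phi$. Finally, I would treat $\frac{1}{i}(\partial_t \M^*)\M$ analogously; after Bogoliubov conjugation its linear-in-$a^*$ piece contributes the $\frac{1}{i}\partial_t \phi$ that will appear in \eqref{pphieq}, while its quadratic-in-$a^*$ piece supplies precisely the $\frac{1}{i}\partial_t \sht$ and $\frac{1}{i}\partial_t \chbt$ that are embedded in $\tilde\S$ and $\tilde\W$.

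Matching coefficients then identifies $\Lambda$ in \eqref{Lambda2} as the combination of the classical $\phi\otimes\phi$ piece (from the Weyl step) and the $\frac{1}{2N}\sht$ piece (from the Bogoliubov step), while $\Gamma$ in \eqref{Gamma2} collects the corresponding $a^*a$ contractions; the operator $\tilde g_N$ in \eqref{gn} assembles $-\Delta\delta$ from $\H_1$ together with the $v_N(\operatorname{tr}\Gamma)$ and $v_N\Gamma$ contractions from the quartic $\V$. The coefficient of $a^*_x\Omega$ becomes the left-hand side of \eqref{pphieq}, with the $\sh\circ\shb$ terms arising from normal-ordering the $a^*a^*aa$ factor of $\V$. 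The coefficient of $a^*_x a^*_y\Omega$, split into its symmetric and antisymmetric components (matching the definitions of $\tilde\S$ and $\tilde\W$ in \eqref{deftilde}), yields \eqref{sseq} and \eqref{wweq}.

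The main obstacle is purely combinatorial: many contractions of similar structure must be grouped into the compact operator-composition notation $\circ$, signs tracked carefully through the skew-Hermitian conventions, and normal ordering executed so as to separate the pieces contributing to $X_0, X_1, X_2$ from those that kill $\Omega$. The conceptual content is modest, but the bookkeeping needed to land exactly on the form \eqref{pphieq}--\eqref{wweq} (including the correct placement of the $\operatorname{tr}\Gamma$ term, the coupling via $v_N\Lambda$, and the cancellation of all monomials of pure $a^*$-degree $3$ and $4$ by choice of $X_1=X_2=0$) is where the effort lies.
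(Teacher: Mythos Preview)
Your plan is correct and is essentially the direct computation the authors have in mind: expand $\H_{red}$ via the Weyl shift and Bogoliubov rotation, normal-order, and read off the Fock components. Note, though, that this paper does not itself prove the theorem; it is quoted from \cite{GMM}. The closely related computation of $\H_{red}$ does appear in Section~\ref{reducedham} of the present paper (for the purpose of estimating error terms), where Wick's theorem is used to put the cubic and quartic pieces in normal order and the quadratic part is written as $\H_{\tilde g} - \I\bigl(\begin{smallmatrix}\tilde w^T & \overline{\tilde f}\\ -\tilde f & -\tilde w\end{smallmatrix}\bigr)$, so that $X_2$ is a multiple of $\tilde f$; this is exactly the grouping you describe.

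One correction: the Bogoliubov conjugation formula is
\[
e^{\B}a_x e^{-\B} = \int\bigl(\ch(y,x)\,a_y + \sh(y,x)\,a_y^*\bigr)\,dy
\]
with $\ch(k)$ and $\sh(k)$, not $\ch(2k)$ and $\sh(2k)$ (see \eqref{conjf}); the $\tfrac12$ in the definition of $\B$ is precisely what makes the exponent $k$ rather than $2k$. The doubled-argument objects $\sht = 2\,\sh\circ\ch$ and $\chbt$ arise only after the contractions are assembled into the final equations, not in the conjugation step itself. This does not affect your overall strategy, only the intermediate bookkeeping.
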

Notice the similarity with
\eqref{meanfield-3}, \eqref{pair-S-1}, \eqref{pair-W-1}.

Since it is difficult to prove estimates for these equations directly, we will write them down in a different, equivalent form.
The derivation will be self-contained, and in fact most of the rest of this paper is independent of our previous
work \cite{GMM1}-\cite{GMM}.

We now state the main result of our current paper.
\begin{theorem} \label{GM3thm}
Let $ \frac{1}{3}< \beta < \frac{2}{3}$ , and let the interaction potential $v \in \mathcal S$ satisfy $v \ge 0$ and $|\hat v| \le \hat w$
for some  $w \in \mathcal S$. Let $\phi$, $k$ be solutions to \eqref{pphieq}, \eqref{sseq}, \eqref{wweq}
with smooth initial conditions $\phi(0, \cdot) $, $k(0, \cdot) $ satisfying the following regularity
uniformly in $N$
(expressed in terms of  $\phi$, $\Lambda$ and $\Gamma$ defined above, as well as $\sh $): For some $\epsilon_0 >0$ and all
 $0 \le i \le 1$, $0 \le j \le 2$
\begin{align}
&\|<\nabla_x>^{\frac{1}{2}+\epsilon_0} \left(\frac{\partial}{\partial t}\right)^i\nabla_{x} ^j\phi(t, x)\bigg|_{t=0}\|_{L^2(dx)} \le C \label{IVP1}\\
&\|<\nabla_x>^{\frac{1}{2}+\epsilon_0} <\nabla_y>^{\frac{1}{2}+\epsilon_0}\left(\frac{\partial}{\partial t}\right)^i\nabla_{x+y}^j \Gamma(t, x, y)\bigg|_{t=0}\|_{L^2(dx dy)} \le C\label{IVP2} \\
&\|<\nabla_x>^{\frac{1}{2}+\epsilon_0} <\nabla_y>^{\frac{1}{2}+\epsilon_0} \left(\frac{\partial}{\partial t}\right)^i\nabla_{x+y} ^j\Lambda(t, x, y)\bigg|_{t=0}\|_{L^2(dx dy)} \le C \label{IVP3}\\
&\|   \nabla_{x+y} ^j   \sh(0, x, y)\|_{L^2(dxdy)} \le C \label{IVP4}
\end{align}

Then there exists a real function $\chi(t)=\chi_N(t)$ and  a (small) $T_0 >0$ and $C=C(T_0, \epsilon_0, \beta)$  such that
\begin{align*}
&\|\psi_{exact}-\psi_{appr}\|_{\F}:=\|e^{i t \H}e^{-\sqrt{N}
\A(\phi_{0})}e^{-\B(k(0))}\Omega-
e^{i \chi(t)}e^{-\sqrt{N}\A(\phi(t))}e^{-\B(k(t))}\Omega\|_{\F}\\
&\le \frac{C}{N^{\frac{1}{6} }}
\end{align*}
for $0 \le t \le T_0$.
\end{theorem}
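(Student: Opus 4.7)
The plan is to work in the reduced frame $\psi_{red}(t) := e^{\B(t)}e^{\sqrt N \A(t)}\psi_{exact}(t)$, which solves $\frac{1}{i}\partial_t\psi_{red} = \H_{red}\psi_{red}$ with $\psi_{red}(0)=\Omega$, so that $\|\psi_{exact}-\psi_{appr}\|_\F = \|\psi_{red}(t) - e^{i\chi(t)}\Omega\|_\F$ for any real phase $\chi$. Decompose $\H_{red}\Omega = (X_0,X_1,X_2,X_3,X_4,0,\ldots)$ as in \eqref{Xidef}; Theorem \eqref{mainthm} identifies the coupled equations \eqref{pphieq}--\eqref{wweq} with the abstract conditions $X_1=X_2=0$. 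Choose $\chi(t) := \int_0^t X_0(s)\,ds$; the error $E(t) := \psi_{red}(t) - e^{i\chi(t)}\Omega$ then satisfies
\begin{equation*}
\tfrac{1}{i}\partial_t E = \H_{red}\, E + e^{i\chi(t)}\bigl(0,0,0,X_3(t),X_4(t),0,\ldots\bigr), \qquad E(0)=0.
\end{equation*}
Since $\H_{red}$ is self-adjoint on $\F$, an energy estimate reduces the theorem to the bound
\begin{equation*}
\int_0^{T_0}\bigl(\|X_3(s)\|_\F + \|X_4(s)\|_\F\bigr)\,ds\ \lesssim\ N^{-1/6}.
\end{equation*}

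The Fock norm of $X_n\Omega$ equals the $L^2$ norm of its symmetric $n$-particle kernel. Representative contributions have the shape $N^{-1/2}\,v_N(x-y)\phi(t,y)\,k(t,x,z)$ for $X_3$ and $N^{-1}\,v_N(x-y)\,k(t,x,z)k(t,y,w)$ for $X_4$, plus similar pieces built from $\Lambda, \Gamma$ and their conjugates. The problem thus reduces to $L^2$ estimates on these kernels under uniform-in-$N$ control of $\phi, \Lambda, \Gamma$ and $\sh$. To obtain that control I would first reformulate the coupled system \eqref{pphieq}--\eqref{wweq} in the variables $\phi, \Lambda, \Gamma$ as a BBGKY-style hierarchy: a Hartree-type equation for $\phi$, a two-particle Schr\"odinger equation for $\Lambda$ with a self-consistent $v_N\Lambda$ source, and a commutator equation for $\Gamma$. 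A local Picard iteration in the $\langle\nabla_x\rangle^{1/2+\epsilon_0}\langle\nabla_y\rangle^{1/2+\epsilon_0}$-weighted spaces dictated by \eqref{IVP1}--\eqref{IVP4}, exploiting the hypothesis $|\hat v|\le \hat w$ to ensure that the $v_N$-potentials act uniformly in $N$, then produces a time interval $[0,T_0]$ on which all these norms stay bounded independently of $N$.

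The main obstacle is the quartic term $X_4$: two copies of $k$ are multiplied by the concentrating weight $v_N(x-y)$, and a direct pointwise bound on $v_N$ produces the unacceptable factor $N^{3\beta}$. The remedy is a bilinear Strichartz/collapsing estimate in the spirit of Chen-Holmer \cite{C-H1}, built from the fact that $\Lambda$ satisfies a two-particle Schr\"odinger-type equation of BBGKY form: smoothing in the diagonal variable $x-y$ allows $v_N(x-y)$ to be absorbed with a loss in $N$ that is strictly sublinear precisely when $\beta<2/3$, yielding the stated $N^{-1/6}$ after integration in time. The cubic term $X_3$ is estimated by the analogous but easier trilinear inequality in which one factor of $k$ is replaced by $\phi$ and $\|v_N\|_{L^1}\lesssim 1$ is used. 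The new technical input, compared with \cite{GM, elif2}, is that this bilinear smoothing must survive the self-consistent $v_N\Lambda$ nonlinearity in \eqref{sseq}--\eqref{wweq} rather than merely the linear structure of \eqref{pair-S-1}--\eqref{pair-W-1}.
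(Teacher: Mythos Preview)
Your reduction to an energy estimate for $E$ is the point where the argument breaks. You write that since $\H_{red}$ is self-adjoint, the theorem follows from $\int_0^{T_0}(\|X_3\|_\F+\|X_4\|_\F)\,ds\lesssim N^{-1/6}$, and you then identify $X_4$ as the main obstacle while calling $X_3$ ``easier'' via $\|v_N\|_{L^1}\lesssim 1$. In fact the situation is exactly the reverse. The leading part of $X_3$ is $\frac{1}{\sqrt N}v_N(y_1-y_2)\phi(y_2)\,\shh(y_3,y_1)$, and its $L^2$ norm scales like $\frac{1}{\sqrt N}\|v_N\|_{L^2}\sim N^{(3\beta-1)/2}$, which for $\beta>1/3$ diverges as $N\to\infty$. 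The bound $\|v_N\|_{L^1}\lesssim 1$ is of no help here because the $L^2$ norm of a product localized by $v_N(y_1-y_2)$ sees $\|v_N\|_{L^2}$, not $\|v_N\|_{L^1}$. By contrast $\|X_4\|_{L^2}\sim\frac{1}{N}\|v_N\|_{L^2}\sim N^{3\beta/2-1}\ll 1$ for $\beta<2/3$, so $X_4$ is not the bottleneck.

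Because $\|X_3\|_{L^2}$ is large, the paper abandons the direct energy estimate for the equation satisfied by $E$. It instead splits $\H_{red}-X_0=\H+\P$, where $\H$ is the original diagonal Fock Hamiltonian (which admits Strichartz estimates of the form $\|u\|_{L^2_tL^6_{x_i-x_j}L^2}\lesssim\|f\|_{L^2_tL^{6/5}_{x_i-x_j}L^2}$ on each fixed particle sector). In that dual Strichartz norm the source is small: $\frac{1}{\sqrt N}\|v_N\|_{L^{6/5}}\sim N^{(\beta-1)/2}\le N^{-1/6}$ for $\beta\le 2/3$, which is where the exponent $1/6$ actually comes from. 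The price is that $\P$ is not bounded on Fock space (its worst pieces have norm $\sim N^{1/2}$), so one cannot simply apply Strichartz and absorb $\P$ as a perturbation. The paper handles this by splitting $\P=\P_1+\P_2$ with $\P_2$ bounded and $\P_1$ satisfying $\|\P_1 E\|_{S'}\lesssim N^{-1/6}\|E\|_S$, and then iterating $\S_D^{-1}\P_1$ three times so that the accumulated gain $N^{-4/6}$ beats the final $\|\P\|\lesssim N^{1/2}$ loss, after which an energy estimate closes. Your outline contains neither the Fock-space Strichartz step nor this iteration, and without them the argument cannot produce $N^{-1/6}$ in the range $\beta>1/3$.
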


Several remarks are in order.

\begin{remark}  First, we comment on the initial conditions for our equations.
The kernel considered in \cite{B-O-S} (for  $\beta=1$ ) is of the form
 $k(t, x, y) =- N \phi(t, x) \phi(t, y)
w (N(x-y))$ where $\phi$ solves the Gross-Pitaevskii NLS and $1-w$ is the solution to the zero energy scattering solution.
The function $w(x)$ is smooth near 0 and behaves like
$\frac{a}{|x|}$ at infinity. This is close to the expected form of the ground state and corresponds, in our set-up, to
\begin{align*}
\Lambda(0, x, y)=\phi(x)\phi(y)(1-N^{\beta-1}w(N^{\beta} (x-y))
 \end{align*}
 We can prescribe $\Lambda(0, x, y)$ arbitrarily, but the time derivative is determined by the equation \eqref{evLambda2} (see Section
 \eqref{self-contained})
 which has a singular term $\frac{1}{N} v_N$ and
 \begin{align*}
 \frac{1}{i}\partial_{t}\Lambda(t, x, y) =
 -\left(-\Delta_{x}-\Delta_{y}+\frac{1}{N}v_N(x-y)\right)\Lambda(t, x, y)  + \mbox{ smoother terms}
 \end{align*}
  Our initial conditions \eqref{IVP3} with  one time derivative are compatible with such initial condition provided
 \begin{align*}
 &<\nabla_x>^{\frac{1}{2}+\epsilon_0} <\nabla_y>^{\frac{1}{2}+\epsilon_0}\left(-\Delta_{x, y}+\frac{1}{N} v_N(x-y)\right)\phi(x)\phi(y)(1-N^{\beta-1}w(N^{\beta} (x-y))\\
 &\in L^2(dx dy)
 \end{align*}
which is true  if, for instance,
 \begin{align*}
 \left(-\Delta_{x, y}+\frac{1}{N} v_N(x-y)\right)(1-N^{\beta-1}w(N^{\beta} (x-y))=0
 \end{align*}
 but are incompatible with the choice $k=0$, $
\Lambda(0, x, y)=\phi(x)\phi(y)$. This is in contrast to our earlier results \cite{GMM1, GMM2, GM} on the uncoupled equations.  

\end{remark}

\begin{remark}  The value $\beta < 2/3 $  is the highest value  for which the potential $N^{3 \beta -1}v(N^{\beta}(x-y))$ can be treated as a perturbation using Strichartz estimates in various places in the paper (see the next section).  We hope it will be possible to
develop a more refined analysis which does not treat the potential as a perturbation, and
extend the range of $\beta$. 
\end{remark}

\begin{remark}
 Our theorem is stated and proved only locally in (small) time. The proof is based on
Theorem \eqref{mainNL} which is, essentially, a local existence theorem with bounds uniform in $N$ for initial conditions  with
\begin{align*}
&\|<\nabla_x>^{\frac{1}{2}+\epsilon}<\nabla_y>^{\frac{1}{2}+\epsilon}\Lambda(0, \cdot)\|_{L^2}\le C\\
&\|<\nabla_x>^{\frac{1}{2}+\epsilon}<\nabla_y>^{\frac{1}{2}+\epsilon}\Gamma(0, \cdot)\|_{L^2}\le C\\
&\|<\nabla_x>^{\frac{1}{2}+\epsilon} \phi(0, \cdot)\|_{L^2} \le C
\end{align*}
 However, one can show, based on the conservation laws proved in \cite{GMM}, for $\epsilon$ sufficiently small,
  \begin{align*}
&\|<\nabla_x>^{\frac{1}{2}+\epsilon}<\nabla_y>^{\frac{1}{2}+\epsilon}\Lambda(t, \cdot)\|_{L^2}\le C (t) \\
&\|<\nabla_x>^{\frac{1}{2}+\epsilon}<\nabla_y>^{\frac{1}{2}+\epsilon}\Gamma(t, \cdot)\|_{L^2}\le C\\
&\|<\nabla_x>^{\frac{1}{2}+\epsilon} \phi(t, \cdot)\|_{L^2} \le C
\end{align*}
  uniformly in $N$. This makes it likely that our main result Theorem \eqref{GM3thm} extends globally
  in time with some constant $C= C( t)$ depending on $t$. We do not yet know  how $C( t)$  will depend on $t$ as $t \to \infty$.

\end{remark}

\begin{remark} In the very recent and important paper \cite{BCS}, Bocatto, Cenatiempo and Schlein prove a   result closely related to ours in the full range $\beta <1$,  and the estimate is global in time.  However, there are substantial differences between their work and ours.
 The techniques used in \cite{BCS} are quite different than ours, and the approximation in
 \cite{BCS}
 is given (translating to our notation) by
 $e^{i \chi(t)}e^{-\sqrt{N}\A(\phi(t))}e^{-\B(k(t))}U_{2, N}(t) \Omega$ where $k(t)=k(t, x, y)$ is explicit (and related but different from our $k(t)$) and  $U_{2, N}(t)$ is an evolution in Fock space with a quadratic generator (see the page preceding Theorem 1.1 in \cite{BCS}). Given the complexity the evolution equation defining of $U_{2, N}(t)$,
 we believe there is still sufficient interest in having an approximation given by just
 $e^{i \chi(t)}e^{-\sqrt{N}\A(\phi(t))}e^{-\B(k(t))} \Omega$ where $k$ satisfies a classical PDE in $6+1$ variables.

\end{remark}

\section{Guide to the proof}\label{guide}

Before going into the details of the complete proof of the Main Theorem \eqref{GM3thm}, we will explain the main ideas.
In order to control the error terms in section
\eqref{fockerror}, we need estimates for $\|\sht\|_{L^2(dx dy)}$, uniformly in $N$. This is accomplished in Section
 \eqref{estforu}, but here is a miniature simplified sketch. ($\S$ denotes $\frac{1}{i} \frac{\partial}{\partial t} - \Delta$
 in $3+1$ or $6+1$ dimensions.)

 Replace equation \eqref{sseq} by the simplified version
\begin{align*}
&\S u=- v_N (x-y) \Lambda(t, x, y) \sim -\delta(x-y) \Lambda(t, x, x)
\end{align*}
( $u$ denotes $\sht$).
The right hand side is too singular to apply energy estimates or Strichartz estimates, so we proceed by writing Duhamel's formula and integrating by parts:
\begin{align}
&u(t, x, y)= u(0, x, y)+\int_0^t e^{i(t-s)\Delta_{x, y}} \delta(x-y) \Lambda(s, x, x)ds \notag \\
&= \int_0^t e^{i(t-s)\Delta_{x, y}}\Delta_{x, y}^{-1} \frac{\partial}{\partial s}\big(\delta(x-y) \Lambda(s, x, x)\big)ds
\label{timetrick}
\\
& +\mbox{boundary terms} \notag
\end{align}
Now $\Delta_{x, y}^{-1}$ smoothes out the singularity of $\delta(x-y)$, but we need good estimates for
\begin{align}
\frac{\partial}{\partial s} \Lambda(s, x, x). \label{whatweneed}
\end{align}
Therefore, we need to assume good estimates for $\frac{\partial}{\partial s} \Lambda(s, x, y)\bigg|_{s=0}$.
Continuing, we derive coupled equations for $\phi$, $\Lambda$ and $\Gamma$, see Theorem
\eqref{equivthm}.
For this discussion, just look at the simplified model:
\begin{align*}
\S \phi(t, x) = &-\int dz \,  v_N(x-z) \Lambda(t, x, z) \bar \phi(t, x) \\
\left(\S +\frac{1}{N} v_N\right) \Lambda(t, x, y) =&- \int dz\, v_N(x-z)\Lambda(t, x, z) \bar \phi(t, z)  \phi(t, y)\\
&- \int dz \,  v_N(y-z)\Lambda(t, y, z) \bar \phi(t, z)  \phi(t, x)
\end{align*}
or, replacing $v_N$ by $\delta$,
\begin{align}
\S \phi(t, x) = &- \Lambda(t, x, x) \bar \phi(t, x) \label{eq2phi}\\
\left(\S +\frac{1}{N} v_N(x-y)\right) \Lambda(t, x, y) =&- \Lambda(t, x, x) \bar \phi(t, x)  \phi(t, y) \label{eq1lambda}\\
& -\Lambda(t, y, y) \bar \phi(t, y)  \phi(t, x)
\notag
\end{align}
It is well known that NLS is well-posed in $H^{1/2}$ in $3+1$ dimensions,
so it is natural to prove a well-posedness result with $\nabla^{1/2} \phi(0, x) \in L^2$ and
$\nabla_x^{1/2}\nabla_y^{1/2} \Lambda(0, x, y) \in L^2$. To get things started, we need the space-time collapsing estimate
\eqref{higherlambda}
of Lemma \eqref{spacetime}. This holds for solutions of the homogeneous Schr\"odinger equations, and, automatically, in $X^{1/2+}$
spaces\footnote{$1/2+$ is a number slightly bigger than $1/2$.} (see section \eqref{estsection} for the definition and properties of these spaces).
Ignoring the potential for a moment, it is very easy to treat the equation
\eqref{eq1lambda}.
\begin{align*}
\S \bigg(\nabla_x^{1/2}\nabla_y^{1/2} \Lambda(t, x, y)\bigg)
=&- \nabla_x^{1/2}\bigg(\Lambda(t, x, x) \bar \phi(t, x) \bigg)\nabla_y^{1/2} \phi(t, y) \\
& -\nabla_y^{1/2}\bigg(\Lambda(t, y, y) \bar \phi(t, y) \bigg)\nabla_x^{1/2} \phi(t, x)
\end{align*}
If $\nabla^{1/2} \phi \in X^{1/2+}$ and $\nabla_x^{1/2}\nabla_y^{1/2}\Lambda \in X^{1/2+}$ then, locally in time,\\
 $\nabla^{1/2} \phi \in L^{\infty}(dt) L^2(dx)$ and  $\nabla_x^{1/2} \Lambda(t, x, x) \in L^2(dt)L^2(dx)$
 by \eqref{higherlambda},
 and thus
\begin{align*}
\nabla_x^{1/2}\left(\Lambda(t, x, x) \bar \phi(t, x) \right)\nabla_y^{1/2} \phi(t, y)&\in L^{2}(dt) L^{6/5}(dx) L^2(dy)\\ &\subset X^{-1/2-}
\end{align*}
and similarly for the second term, which puts $\nabla_x^{1/2}\nabla_y^{1/2} \Lambda(t, x, y)$ back in $X^{1/2+}$
(once we resolve the technical discrepancy between $1/2+$ and $1/2-$ by introducing some epsilons), closing the loop in $X$ spaces. The argument for equation \eqref{eq2phi} is similar.

When applying these estimates to \eqref{eq1lambda} we also have to differentiate the potential term, and
$\nabla_x^{1/2}\nabla_y^{1/2} \frac{1}{N} v_N \in L^{6/5}$ uniformly in $N$ if $\beta \le 2/3$. This is the lowest exponent for which the term can be treated as a perturbation using $L^2(dt) L^{6/5}(d(x-y)) L^2(d(x+y))$ Strichartz estimates. The precise statement is Proposition \eqref{mainXest}.

 The spaces $X^{1/2+}$ come with a fixed cut-off function, and we need to vary the size of the time interval at will
 in order to obtain a contraction out of the argument outlined above, so, for this technical reason,  the non-linear result
Theorem \eqref{mainNL}
 is proved in the spaces \eqref{nlamb}-\eqref{nphi}. Once we have this result, we can differentiate the equations with respect to $t$ and get control over $\N_T(\frac{\partial}{\partial t} \Lambda)$ which in turn controls
 \eqref{whatweneed}.
Finally, in order to control Fock space error terms such as
\begin{align*}
&\left(\S +\frac{1}{N}\sum_{1
\le i<j\le 3} v_N(x_i-x_j) \right) E= \frac{1}{\sqrt N} \phi(x_1)v_N(x_1-x_2) \sh(x_2, x_3)\\
&E(0, \cdot)=0 \notag
\end{align*}
(see \eqref{fockspaceest} ) we refrain from using time derivatives of $\sh$  which would require higher derivatives of $\Lambda$ (see
\eqref{timetrick}). Instead we use Strichartz estimates. The fact that $\frac{1}{\sqrt N}\| v_N \|_{L^{6/5}} \le \frac{C}{N^{1/6}}$
(for $\beta \le 2/3)$) explains the exponent in the statement of the theorem. The complete proof has to include several iterates of this type of argument together with energy estimates in Fock space in order to handle off diagonal terms in our reduced Hamiltonian.
\section{The equations for $\Lambda$ and $\Gamma$ (self-contained derivation)} \label{self-contained}

As already mentioned, it seems difficult to obtain estimates (uniformly in $N$) for $\phi$ and $k$ directly from equations \eqref{sseq},
\eqref{wweq}. The equations seem linear, but the "coefficients"  $v_N \Lambda$,
$v_N \Gamma$
depend on $\sht$, $\cht$. We will proceed
indirectly by
deriving and studying equations for $\Lambda$ and $\Gamma$.
\begin{theorem} \label{equivthm}
The equations of Theorem \eqref{mainthm} are equivalent to
\begin{subequations}
\begin{align}
& \hskip 0.3 in \left\{\frac{1}{i}\partial_{t}-\Delta_{x_1}\right\}\phi(x_{1}) \label{evphi}
=-\int dy\left\{
v_N(x_{1}-y)\Gamma(y,y)\right\}\phi(x_{1})
\\
&-\int dy\left\{v_N(x_{1}-y)\phi(y)\big(\Gamma(y,x_{1})-\overline{\phi}(y)\phi(x_1)\big)
+v_N(x_{1}-y)\overline{\phi}(y)\big(\Lambda(x_{1},y)-\phi(x_1)\phi(y)\big)\right\} \notag\\
 & \notag\\
&  \hskip 0.3 in \left\{\frac{1}{i}\partial_{t}-\Delta_{x_1}-\Delta_{x_2}+\frac{1}{N}v_N(x_{1}-x_{2})\right\}\Lambda(x_1, x_2) \label{evLambda2}\\
& =-\int dy \left\{v_N(x_{1}-y)\Gamma(y,y) + v_N(x_{2}-y)\Gamma(y,y)\right\}\Lambda(x_1, x_2)
\nonumber
\\
&-\int dy\left\{\big(v_N(x_{1}-y)+v_N(x_{2}-y)\big)\Big(\Lambda(x_{1},y)\Gamma(y,x_{2})+
\overline{\Gamma}(x_{1},y)\Lambda(y,x_{2})\Big)
\right\}+
\nonumber
\\
&+2\int dy\left\{\big(v_N(x_{1}-y)+v_N(x_{2}-y)\big)\vert\phi(y)\vert^{2}\phi(x_{1})\phi(x_{2})\right\}\notag\\
& \notag\\
& \hskip 0.3 in \left\{\frac{1}{i}\partial_{t}-\Delta_{x_1}+\Delta_{x_2}\right\}\overline \Gamma(x_1, x_2)\label{evGamma2}\\
&
=
-\int dy\left\{\big(v_N(x_{1}-y)-v_N(x_{2}-y)\big){\Lambda}(x_{1},y)\overline\Lambda(y,x_{2})\right\}+
\notag
\\
&-\int dy\left\{\big(v_N(x_{1}-y)-v_N(x_{2}-y)\big)\Big(\overline\Gamma(x_{1},y)\overline\Gamma(y,x_{2})+\overline\Gamma(y,y)\overline\Gamma(x_{1},x_{2})\Big)
\right\}
\nonumber
\\
&+2\int dy\left\{\big(v_N(x_{1}-y)-v_N(x_{2}-y)\big)\vert\phi(y)\vert^{2}{\phi}(x_{1})\overline\phi(x_{2})\right\} \notag
\end{align}
\end{subequations}
See \eqref{BB1}-\eqref{BB3} for the conceptual meaning of these equations in terms of the density matrices $\L$ defined below.
Also note that $\phi$, $\Lambda$ and $\Gamma$ depend on $t$, but this dependence has been suppressed in the above formulas.

\end{theorem}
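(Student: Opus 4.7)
My plan is a direct computation: given the definitions \eqref{Lambda2}, \eqref{Gamma2}, apply the Schr\"odinger operators on the left-hand sides of \eqref{evphi}--\eqref{evGamma2} to $\phi$, $\Lambda$ and $\overline\Gamma$ respectively, substitute in the known evolution equations \eqref{pphieq}, \eqref{sseq}, \eqref{wweq} of Theorem \ref{mainthm}, and finally convert every expression containing $\shh(k)$, $\chh(k)$, $\shh(2k)$, $\chh(2k)$ back into the geometric variables $\phi$, $\Lambda$, $\Gamma$ using the Bogoliubov hyperbolic identity
\[
\chh(2k) = \delta + 2\,\overline{\shh(k)}\circ\shh(k)
\]
(and its conjugate), together with $\shh(2k) = 2\,\shh(k)\circ\chh(k)$ when needed. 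The equation \eqref{evphi} for $\phi$ is essentially a rewriting of \eqref{pphieq}: the terms $\frac{1}{2N}\shh(2k)(x_1,y)=\Lambda(x_1,y)-\phi(x_1)\phi(y)$ and $\frac{1}{N}(\shh\circ\overline{\shh})(x_1,y)=\Gamma(y,x_1)-\overline\phi(y)\phi(x_1)$ (up to the symmetry of $k$) are substituted, producing exactly the right-hand side of \eqref{evphi}.

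\textbf{The $\Lambda$-equation.} For \eqref{evLambda2}, I would expand
\[
\Bigl(\tfrac{1}{i}\partial_t - \Delta_{x_1}-\Delta_{x_2} + \tfrac{1}{N}v_N(x_1-x_2)\Bigr)\Lambda
= \tfrac{1}{2N}\,\tilde\S(\shh(2k)) + \text{bilinear in }\phi.
\]
In the first summand the Laplacians from $\tilde g_N$ reproduce $-\Delta_{x_1}-\Delta_{x_2}$; the remaining Hartree-type pieces of $\tilde g_N$ contribute exactly the $v_N\ast(\mathrm{tr}\,\Gamma)$ and $v_N(x_i-y)\Gamma(\cdot,y)$ terms of \eqref{evLambda2}. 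Equation \eqref{sseq} supplies $-\frac{1}{2N}[(v_N\Lambda)\circ\chh(2k)+\overline{\chh(2k)}\circ(v_N\Lambda)]$. Applying the identity above splits each of these into a diagonal piece $\tfrac{1}{2N}(v_N\Lambda)(x_1,x_2)$ (which telescopes against a companion piece from the other factor and against the delta-like $\tfrac{1}{N}v_N(x_1-x_2)\phi(x_1)\phi(x_2)$ term coming from the bilinear-in-$\phi$ computation) plus an off-diagonal piece $\tfrac{1}{N}(v_N\Lambda)\circ(\overline{\shh}\circ\shh)$, which by \eqref{Gamma2} is $(v_N\Lambda)\circ(\Gamma-\overline\phi\otimes\phi)$. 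Combining with the evolution of $\phi\otimes\phi$ via \eqref{pphieq} (whose own right-hand side has already been put in the form \eqref{evphi}) and carefully collecting the quartic pieces in $\phi$ that arise when one expands $\Lambda$ and $\Gamma$ in terms of $\phi$ yields the stated expression.

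\textbf{The $\Gamma$-equation and the main obstacle.} For \eqref{evGamma2} I would treat $\overline\Gamma = \tfrac{1}{N}\shh\circ\overline{\shh}+\phi\otimes\overline\phi$ and, since \eqref{sseq}--\eqref{wweq} only provide evolutions for $\shh(2k)$ and $\overline{\chh(2k)}$, rewrite
\[
\tfrac{1}{N}\,\shh(k)\circ\overline{\shh(k)} \;=\; \tfrac{1}{2N}\bigl(\overline{\chh(2k)}-\delta\bigr)
\]
(the transposed/conjugated version of the same Bogoliubov identity). Then \eqref{wweq}, which has commutator structure via $\tilde\W$, automatically produces the sign pattern $-\Delta_{x_1}+\Delta_{x_2}$ and the antisymmetrised interaction $v_N(x_1-y)-v_N(x_2-y)$ seen in \eqref{evGamma2}. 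The inhomogeneity $-(v_N\Lambda)\circ\overline{\shh(2k)}+\shh(2k)\circ\overline{v_N\Lambda}$ becomes, after applying $\shh(2k)=2\shh\circ\chh$ and $\chh(k)^2 = \delta + \shh(k)\circ\overline{\shh(k)}$, a combination of $\Lambda\circ\overline\Lambda$ and $\overline\Gamma\circ\overline\Gamma$ terms matching the right-hand side of \eqref{evGamma2}; the $\overline\Gamma(y,y)\overline\Gamma(x_1,x_2)$ piece arises from the $\tilde g_N$ Hartree part. The main difficulty I anticipate is purely combinatorial: tracking the numerous quartic expressions in $\shh, \chh, \phi$ and verifying that they reassemble cleanly into the cubic expressions in $\Lambda, \Gamma, \phi$. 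The Bogoliubov identity linking $\chh(2k)$ to $\overline{\shh}\circ\shh$ is precisely the algebraic mechanism turning the ``linear-in-$\shh(2k)$'' equations of Theorem \ref{mainthm} into the ``nonlinear-in-$\Lambda,\Gamma$'' equations stated here, and the success of the whole computation depends on exact cancellations between the $\phi\otimes\phi$-part expansions and the off-diagonal $(v_N\Lambda)\circ\overline\shh\circ\shh$ contributions.
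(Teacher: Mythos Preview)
Your approach is correct and is precisely the ``direct calculation'' the paper explicitly acknowledges as possible but chooses not to pursue. The paper instead takes a more structural route: it introduces the density matrices
\[
\L_{m,n}(t,y_1,\ldots,y_m;x_1,\ldots,x_n)=N^{-(m+n)/2}\langle\Omega,\M^*\P_{m,n}\M\Omega\rangle,
\]
observes that $\L_{0,1}=\phi$, $\L_{0,2}=\Lambda$, $\L_{1,1}=\Gamma$, and derives BBGKY-type evolution equations for them (Proposition~\ref{maineqq}) using only the abstract conditions $X_1=0$, $X_2=0$ via a cancellation lemma showing $\langle\Omega,[\H_{red},\M^*\P\M]\Omega\rangle=0$ for monomials $\P$ of degree at most two. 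The higher matrices $\L_{1,2}$, $\L_{2,2}$, $\L_{1,3}$ appearing on the right-hand sides are then computed explicitly (Lemma~\ref{lmat}) and factor through $\phi$, $\Lambda$, $\Gamma$, closing the system. The final substitution $\psi=2N(\Lambda-\phi\otimes\phi)$, $\omega=2N(\Gamma-\overline\phi\otimes\phi)$ is the same one you perform, but it enters only at the very end. This route sidesteps the combinatorial bookkeeping you flag as the main obstacle, is logically independent of the explicit form of the equations in Theorem~\ref{mainthm}, and makes transparent the BBGKY interpretation \eqref{BB1}--\eqref{BB3} that informs the later harmonic-analysis estimates. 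Your approach, by contrast, is more elementary in that it needs no Fock-space machinery at all, only the hyperbolic identities for $\shh$ and $\chh$; it trades conceptual clarity for a self-contained algebraic verification.
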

While it is easy to prove this by direct calculation, we proceed with a derivation
 which is independent of Theorem \eqref{mainthm}
of our previous paper
 \cite{GMM}.

As in our previous papers \cite{GMM1}-\cite{GMM}, we
consider
\begin{align}
\M&:=e^{-\sqrt{N}\A}e^{-\B}\qquad\qquad  {\rm and \ we\ have\ the\ evolution} \label{defM}
\\
\frac{1}{i}\partial_{t}\M &=\H\M-\M\H_{\rm red}\qquad {\rm and\ of\ course,}\label{evM}
\\
-\frac{1}{i}\partial_{t}\M^{\ast}&=\M^{\ast}\H-\H_{\rm red}\M^{\ast}\ . \label{evMstar}
\end{align}
The evolution equation for $\M$ above is obvious from \eqref{reduced}.

Take a monomial of the form: (Wick ordered)
\begin{align*}
\P_{m,n} =a^{\ast}_{y_1}a^{\ast}_{y_2}\ldots a^{\ast}_{y_m}a_{x_{1}}a_{x_{2}}\ldots a_{x_{n}}
\end{align*}
and define the $\L$ matrices as follows,
\begin{align*}
&\L_{m, n}(t,y_{1},\ldots, y_{m}; x_{1},\ldots, x_{n}):
=\frac{1}{N^{(m+n)/2}}\big<a_{y_1} \cdots a_{y_m} \M\Omega, \,  a_{x_1} \cdots a_{x_n} \M\Omega \big>\\
&=\frac{1}{N^{(m+n)/2}}\big<\Omega, \, \M^{\ast}\P_{m,n}\M\Omega\big>\\
\end{align*}
The notation is chosen so that the second set of variables are  un-barred.
We will often skip the $t$ dependence, since it is passive in the calculations that we have in mind.

Fortunately we will only need $\L_{0, 1}$, $\L_{1,1}$ and $\L_{0, 2 }$ (which turn out to be $\phi$, $\Gamma$ and $\Lambda$) but the computation is quite general.

To get started,
 we observe that from the evolution of the operator $\M$ we have,
\begin{align}
&\frac{1}{i}\partial_{t}\Big(\M^{\ast}\P\M \Big)=\big[\H_{\rm red},\M^{\ast}\P\M\big]+\M^{\ast}\big[\P,\H\big]\M
\qquad\qquad  {\rm hence} \notag
\\
&\frac{1}{i}\partial_{t}\L =\frac{1}{N^{(n+m)/2}}\left(\big<\Omega,\big[\H_{\rm red},\M^{\ast}\P\M\big]\Omega\big> +
\big<\Omega, \M^{\ast}\big[\P,\H\big]\M\Omega\big> \right)  \label{twoeq}\ .
\end{align}

 At this point we record the following lemma
 \begin{lemma} \label{canc}
If $\H_{\rm red}\Omega =\big(\mu ,0,0,X_{3},X_{4},0\ldots\big)$ and
\begin{align*}
\P:=a\qquad {\rm or}\qquad \P:=aa\qquad {\rm or}\qquad \P:=a^{\ast}a
\end{align*}
then
\begin{align}
\big<\Omega,\big[\H_{\rm red},\M^{\ast}\P\M\big]\Omega\big>=0,
\end{align}
leaving only the second term in \eqref{twoeq}

\end{lemma}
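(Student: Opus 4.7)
The plan is to prove this via a slot-count (particle-number) argument in Fock space. Since $\A$ and $\B$ are skew-Hermitian, $\M$ is unitary; differentiating $\M^*\M = I$ yields $(\partial_{t}\M^*)\M + \M^*(\partial_{t}\M) = 0$, so $\frac{1}{i}(\partial_{t}\M^*)\M$ is self-adjoint, and together with self-adjointness of $\H$ this makes $\H_{\rm red}$ self-adjoint. I would then write
\[
\big<\Omega, [\H_{\rm red}, \M^*\P\M]\Omega\big> = \big<\H_{\rm red}\Omega, \M^*\P\M\Omega\big> - \big<\M^*\P^*\M\Omega, \H_{\rm red}\Omega\big>,
\]
and decompose $\H_{\rm red}\Omega = \mu\Omega + Y$ with $Y := (0,0,0,X_{3},X_{4},0,\ldots)$ supported in the three- and four-particle sectors. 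Self-adjointness also gives $\mu = \big<\Omega, \H_{\rm red}\Omega\big> \in \R$. The lemma follows once we show $\M^*\P\M\Omega$ and $\M^*\P^*\M\Omega$ are supported in the $n$-particle sectors with $n \le 2$: the cross-terms with $Y$ vanish by orthogonality of distinct sectors, and the diagonal piece reduces to $(\bar\mu-\mu)\big<\Omega,\M^*\P\M\Omega\big> = 0$.

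The sector structure of $\M^*\P\M\Omega$ is obtained by unwinding the conjugation. First, since $[\A, a_{x}] = \phi(x)$ is scalar-valued, the BCH series terminates and $e^{\sqrt{N}\A}a_{x} e^{-\sqrt{N}\A} = a_{x} + \sqrt{N}\phi(x)$, with the analogous formula for $a_{x}^*$. Second, the Bogoliubov transformation expresses $e^{\B}a_{x} e^{-\B}$ as a linear combination of the $a_{y}$ and $a_{y}^*$ with integral kernels built from $\chh(k)$ and $\shh(k)$, and similarly for $a_{x}^*$. Combining,
\[
\M^*a_{x}\M = e^{\B}\bigl(a_{x} + \sqrt{N}\phi(x)\bigr)e^{-\B}
\]
is of the form $L(x) + c(x)I$, where $L(x)$ is linear in $\{a, a^*\}$ and $c(x)$ is a scalar; applied to $\Omega$ this lies in the direct sum of the zero- and one-particle sectors. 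For $\P = aa$ or $\P = a^*a$, the factorization $\M^*\P\M = (\M^*a^{(\ast)}\M)(\M^*a^{(\ast)}\M)$ expresses $\M^*\P\M$ as a product of two such degree-$\le 1$ operators; Wick-ordering the product gives a polynomial of degree at most $2$ in $\{a,a^*\}$ plus strictly lower-order terms, so the action on $\Omega$ produces a vector in slots $0$, $1$, and $2$. The same analysis applied to $\P^*$ yields the identical conclusion, since $\P^*$ is again among $\{a^*, a^*a^*, a^*a\}$.

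This is a purely algebraic argument with no analytic content. The only step that needs care is invoking the Bogoliubov structure theorem to conclude that conjugation by $e^{\B}$ preserves the linear span of $\{a_{x}, a_{x}^*\}_{x}$; in the concrete form used here this is standard and appears explicitly in the earlier papers of the series. I do not anticipate any genuine obstacle: the lemma is really a refined bookkeeping of the creation/annihilation degree, and the hypothesis $X_{1} = X_{2} = 0$ is precisely what is needed so that the "missing" sectors of $\H_{\rm red}\Omega$ leave no residual when paired against the low-sector vectors $\M^*\P\M\Omega$ and $\M^*\P^*\M\Omega$.
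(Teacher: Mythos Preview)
Your argument is correct and follows essentially the same approach as the paper: both use the conjugation formulas $\M^{\ast}a_{x}\M = b_{x} + \sqrt{N}\phi(x)$ to see that $\M^{\ast}\P\M\Omega$ lives in Fock sectors $n\le 2$, and then use the hypothesis on $\H_{\rm red}\Omega$ together with orthogonality of distinct particle-number sectors. The only difference is that you make explicit the self-adjointness of $\H_{\rm red}$ (hence $\mu\in\R$) to handle the zeroth-slot cancellation, whereas the paper leaves this implicit and simply notes that ``only the entries in the zeroth slot survive''; this is a clarification rather than a change of strategy.
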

\begin{proof}

We use the following notation:
\begin{align*}
&c=\ch, \, \, u=\sh\\
&a_{x}(c):=\int dy\left\{a_{y}c(y,x)\right\} ,\qquad a^{\ast}_{x}(u):=\int dy\left\{a^{\ast}_{y}u(y,x)\right\}
\\
&a^{\ast}_{x}(\overline{c}) :=\int dy\left\{a^{\ast}_{y}\overline{c}(y,x)\right\} =\int dy\left\{c(x,y)a^{\ast}_{y}\right\}
\qquad {\rm by\ symmetry}
\\
&a_{x}(\overline{u}):=\int dy\left\{a_{y}\overline{u}(y,x)\right\}=\int dy\left\{\overline{u}(x,y)a_{y}\right\}
\qquad {\rm by\ symmetry}
\end{align*}

We have the conjugation formulas (see also \eqref{conjf})
\begin{subequations}
\begin{align}
\M^{\ast}a_{x}\M &=a_{x}(c)+a^{\ast}_x(u)+\sqrt{N}\phi(x):=b_{x}+\sqrt{N}\phi(x) \label{conj}
\\
\M^{\ast}a^{\ast}_{x}\M&=a^{\ast}_{x}(\overline{c})+a_{x}(\overline{u})+\sqrt{N}\overline{\phi}(x):=b^{\ast}_{x}+\sqrt{N}
\overline{\phi}(x) \label{conj1}
\end{align}
\end{subequations}
which implies the transformation of the monomial,
\begin{align*}
\M^{\ast}\P(a^{\ast},a)\M=\P\big(b^{\ast}+\sqrt{N}\overline{\phi}\ ,\ b+\sqrt{N}\phi\big)\ .
\end{align*}
Now if $\P=a$ then, using \eqref{abstract},
\begin{align*}
\big<\Omega, \H_{\rm red}\big(b_{x}+\sqrt{N}\phi(x)\big)\Omega \big> -
\big<\Omega,\big(b_{x}+\sqrt{N}\phi(x)\big)\H_{\rm red}\Omega \big> =0\ .
\end{align*}
The argument is similar if $\P=aa$ or $\P=a^{\ast}a$ since only the entries in the zeroth slot survive.

\end{proof}

Based on this, we easily prove the following proposition.

\begin{proposition} \label{maineqq} Under the assumptions of Lemma \eqref{canc}, the following equations hold
\begin{subequations}
\begin{align}
&\left(\frac{1}{i} \frac{\partial}{\partial t}
-\Delta_{x_1}\right)\label{BB1}
\L_{0, 1}(t, x_1)
  \\
&= -  \int v_N(x_1 - x_2) \L_{1, 2} (t,  x_2; x_1,   x_2) dx_2 \notag \\
& \notag\\
&\left(\frac{1}{i} \frac{\partial}{\partial t} \label{BB2}
+\Delta_{x_1}  - \Delta_{y_1} \right)\L_{1, 1}(t, x_1; y_1)
 \\
&=  \int v_N(x_1 - x_2) \L_{2, 2} (t, x_1, x_2; y_1, x_2) dx_2 \notag
 - \int v_N(y_1 - y_2)\L_{2, 2} (t, x_1, y_2; y_1, y_2)dy_2 \notag\\
 & \notag\\
&\left(\frac{1}{i} \frac{\partial}{\partial t} \label{BB3}
-\Delta_{x_1} -\Delta_{x_2}
+\frac{1}{N}  v_N(x_1-x_2)\right)\L_{0, 2}(t, x_1, x_2)\\
&= -
 \int v_N(x_1-y)\L_{1, 3}(t, y; x_1, x_2, y)dy
  -
 \int v_N(x_2-y)\L_{1, 3}(t, y; x_1, x_2, y)dy \notag
\end{align}
\end{subequations}
\end{proposition}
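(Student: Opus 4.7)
The starting point is \eqref{twoeq}: since all three choices $\P=a_{x_{1}}$, $\P=a_{x_{1}}^{\ast}a_{y_{1}}$, and $\P=a_{x_{1}}a_{x_{2}}$ satisfy the hypothesis of Lemma \eqref{canc}, the first bracket on the right of \eqref{twoeq} drops out and the proposition reduces to evaluating
\begin{equation*}
\frac{1}{i}\partial_{t}\L_{m,n} \;=\; \frac{1}{N^{(m+n)/2}}\big\langle\Omega,\,\M^{\ast}[\P,\H]\M\Omega\big\rangle
\end{equation*}
with $\H=\H_{1}-\frac{1}{N}\V$, and then recognizing the resulting expectations as the appropriate $\L_{m',n'}$ density matrices.

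For the kinetic piece I would first establish the distributional identities $[a_{u},\H_{1}]=\Delta_{u}a_{u}$ and $[a_{u}^{\ast},\H_{1}]=-\Delta_{u}a_{u}^{\ast}$, obtained directly from $[a_{u},a_{x}^{\ast}a_{y}]=\delta(u-x)a_{y}$ and integration by parts against $\Delta_{x}\delta(x-y)$. The Leibniz rule applied to each of the three choices of $\P$ then produces exactly the differential operators on the left sides of \eqref{BB1}--\eqref{BB3} once the expectation is taken and the definition of $\L_{m,n}$ is invoked. For the potential piece, the basic building block is $[a_{u},\V]=\int v_{N}(u-y)\,a_{y}^{\ast}a_{u}a_{y}\,dy$ (and its adjoint for $a_{u}^{\ast}$), from which $[\P,\V]$ is computed again by Leibniz. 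Each resulting quartic or sextic monomial must then be brought to canonical Wick form $a^{\ast}\cdots a^{\ast}a\cdots a$ in order to be identified, after taking the expectation against $\M\Omega$, with an $\L_{m+1,n+1}$; the factor $1/N$ in front of $\V$ and the $N^{(m+n+2)/2}$ hidden in the normalization of $\L_{m+1,n+1}$ then combine to give the coefficient $1$ visible on the right-hand sides.

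The only step that demands genuine care --- and the one I expect to be the main obstacle --- is the derivation of \eqref{BB3}. When the monomial $a_{x_{1}}\!\int v_{N}(x_{2}-y)\,a_{y}^{\ast}a_{x_{2}}a_{y}\,dy$ is brought to Wick form, sliding $a_{x_{1}}$ past $a_{y}^{\ast}$ produces the anomalous commutator $\delta(x_{1}-y)$, which collapses the integral and yields a residual term $v_{N}(x_{1}-x_{2})a_{x_{2}}a_{x_{1}}$. After the $-\frac{1}{N}$ factor from $-\frac{1}{N}\V$ and the $\frac{1}{N}$ normalization defining $\L_{0,2}$ (together with the symmetry $\L_{0,2}(x_{2},x_{1})=\L_{0,2}(x_{1},x_{2})$), this is exactly the anomalous $\frac{1}{N}v_{N}(x_{1}-x_{2})\L_{0,2}(x_{1},x_{2})$ term visible on the left of \eqref{BB3}. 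No such correction arises in \eqref{BB1} (only a single annihilation, nothing to slide past a creation) or in \eqref{BB2} (the reorderings involve only creation--creation or annihilation--annihilation swaps, which commute trivially). Once this residual is accounted for, collecting the remaining Wick-ordered quartic and sextic monomials produces the $\L_{1,2}$, $\L_{2,2}$, and $\L_{1,3}$ integrals on the right-hand sides of \eqref{BB1}--\eqref{BB3} and closes the proof.
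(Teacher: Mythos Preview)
Your proposal is correct and follows essentially the same route as the paper: both reduce to the identity $\frac{1}{i}\partial_{t}\L=\frac{1}{N^{(m+n)/2}}\langle\Omega,\M^{\ast}[\P,\H]\M\Omega\rangle$ via Lemma~\ref{canc}, compute $[\P,\H_{1}]$ and $[\P,\V]$ by the Leibniz rule, and read off the $\L$ matrices after Wick ordering. Your explicit identification of the anomalous $\frac{1}{N}v_{N}(x_{1}-x_{2})$ term in \eqref{BB3} as the commutator $[a_{x_{1}},a_{y}^{\ast}]=\delta(x_{1}-y)$ arising when $a_{x_{1}}$ is pushed past $a_{y}^{\ast}$, and your observation that no such cross-commutator occurs for \eqref{BB1} or \eqref{BB2}, match the paper's listed commutator formulas exactly.
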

The equation \eqref{BB2} is one of the BBGKY equations, but in our case $\L_{2, 2}$ can be expressed in terms of the earlier matrices, see Lemma \eqref{lmat}.

\begin{proof}
With $\P$ any monomial of degree one or two we know from Lemma \eqref{canc} that
\begin{align*}
\big<\Omega, \big[\H_{\rm red},\M^{\ast}\P\M\big]\Omega \big>=0
\end{align*}
and for any of the corresponding matrices $\L$ we arrive at the equation,
\begin{align*}
\frac{1}{i}\partial_{t}\L=\frac{1}{N^{\alpha}}\big<\Omega, \M^{\ast}\big[\P,\H\big]\M\Omega \big>
\qquad ,\qquad \alpha =1/2\ ,\ 1 \ .
\end{align*}
We need to compute $[\P,\H]$ and for this purpose recall our original Hamiltonian,

\begin{align*}
&\H=\int dxdy\left\{
\Delta_{x}\delta(x-y)a^{\ast}_{x}a_{y}\right\}-\frac{1}{2N}\int dxdy\left\{v_{N}(x-y)a^{\ast}_{x}a^{\ast}_{y}a_{x}a_{x}\right\}
\end{align*}
 Below we list the commutators with each mononomial:
\begin{align*}
\big[a_{x_1},\H\big]&=\Delta_{x_1}a_{x_{1}}-
\frac{1}{N}
\int dy\left\{v_N(x_{1}-y)a^{\ast}_{y}a_{y}\right\}a_{x_1}
\\
\big[a_{x_{1}}a_{x_{2}},\H\big]&=\Delta_{x_1}a_{x_{1}}a_{x_{2}}
+\Delta_{x_2}a_{x_{1}}a_{x_{2}}
-\frac{1}{N}v_N(x_{1}-x_{2})a_{x_{1}}a_{x_{2}}
\\
&-\frac{1}{N}\int dz\left\{\big(v_N(x_{1}-z)+v(x_{2}-z)\big)a^{\ast}_{z}a_{z}a_{x_1}a_{x_2}\right\}
\\
\big[a^{\ast}_{x_1}a_{x_2},\H\big]&=a^{\ast}_{x_1}\Delta_{x_2} a_{x_2}-\left(\Delta_{x_1}a_{x_1}\right)^*a_{x_2}
\\
&+\frac{1}{N}\int dz\left\{\big(v_N(x_{1}-z)-v_N(x_{2}-z)\big)a^{\ast}_{x_1}a^{\ast}_{z}a_{z}a_{x_{2}}\right\}
\end{align*}
from which we can derive the corresponding evolution equations for the $\L$ matrices:
\begin{align}
\left\{\frac{1}{i}\partial_{t}-\Delta_{x_{1}}\right\}\L_{0, 1}(t,x_{1})=
-\frac{1}{N^{3/2}}\int dy v_N(x_1-y)\left\{\big<\Omega, \M^{\ast}a^{\ast}_{y}a_{y}a_{x_1}\M\Omega \big>\right\}
\label{evphi-0}
\end{align}
\begin{align}
&\left\{\frac{1}{i}\partial_{t}-\Delta_{x_{1}}-\Delta_{x_2}+\frac{1}{N}v_N(x_{1}-x_{2})\right\}\L_{0,2}(t,x_{1},x_{2})=
\nonumber
\\
&-\frac{1}{N^{2}}\int dz\left\{\big(v_N(x_{1}-z)+v_N(x_{2}-z)\big)
\big<\Omega, \M^{\ast}a^{\ast}_{z}a_{z}a_{x_1}a_{x_2}\M\Omega \big>\right\}
\label{evLambda-0}
\end{align}
 and finally
\begin{align}
&\left\{\frac{1}{i}\partial_{t}+\Delta_{x_1}-\Delta_{x_2}\right\}\L_{1,1}(t,x_{1},x_{2}) =
\nonumber
\\
&+\frac{1}{N^{2}}\int dz\left\{\big(v_N(x_{1}-z)-v_N(x_{2}-z)\big)\big<\Omega, \M^{\ast}a^{\ast}_{x_1}a^{\ast}_{z}a_{z}a_{x_2}\M\Omega \big>\right\}
\label{evGamma-0}
\end{align}
which implies the statement of the proposition.
\end{proof}
The proof of Theorem  \eqref{equivthm} is finished by computing the necessary matrices $\L$.
We need the following
(writing throughout this proof $[x]$ indicates  freezing the variable $x$):
\begin{align}
&\M^{\ast}a_{x_1}\M\Omega\nonumber\\
&=\big(b_{x_1}+\sqrt{N}\phi(x_{1})\big)\Omega =\Big(\sqrt{N}\phi([x_{1}]),u(y,[x_{1}]),0,0\ldots\Big) \label{arot}\\
&\M^{\ast}a_{x_1}a_{x_{2}}\M\Omega=\nonumber
\\
&\big(b_{x_1}+\sqrt{N}\phi(x_{1})\big)\big(b_{x_{2}}+\sqrt{N}\phi(x_{2})\big)\Omega
=\Big(f_{0},f_{1},f_{2},0,0\ldots\Big) \nonumber
\\
&\qquad {\rm where\ the\ entries\ are:}
\nonumber
\\
&f_{0}([x_{1}],[x_{2}])=N\phi([x_{1}])\phi([x_{2}])+\big(u\circ c))[x_{1}],[x_{2}])=N\Lambda([x_{1}],[x_{2}])
\label{f0}
\\
&f_{1}(y,[x_{1}],[x_{2}])=\sqrt{N}\Big[\phi([x_{1}])u(y,[x_{2}])+\phi([x_{2}])u(y,[x_{1}])\Big]
\label{f1}
\\
&f_{2}(y_{1},y_{2},[x_{1}],[x_{2}])=\frac{1}{\sqrt{2}}\Big[u(y_{1},[x_{1}])u(y_{2},[x_{2}])+u(y_{2},[x_{1}])u(y_{1},[x_{2}])\Big]
\label{f2}
\end{align}
and similarly,
\begin{align}
&\M^{\ast}a^{\ast}_{x_1}a_{x_2}\M\Omega=\nonumber
\\
&\big(b^{\ast}_{x_{1}}+\sqrt{N}\phib(x_{1})\big(\big(b_{x_{2}}+\sqrt{N}\phi(x_{2})\big)\Omega
=\Big(g_{0},g_{1},g_{2},0,0\ldots\Big)
\nonumber
\\
&\qquad {\rm where\ the\ entries\ are :}
\nonumber
\\
&g_{0}[x_{1}],[x_{2}])=N\phib([x_{1}])\phi([x_{2}])+(\overline{u}\circ u)([x_{1}],[x_{2}])=N\Gamma([x_{1}],[x_{2}])
\label{g0}
\\
&g_{1}(y,[x_{1}],[x_{2}])=\sqrt{N}\Big[\phib([x_{1}])u(y,[x_{2}])+\overline{c}(y,[x_{1}])\phi([x_{2}])\Big]
\label{g1}
\\
&g_{2}(y_{1},y_{2},[x_{1}],[x_{2}])=\frac{1}{\sqrt{2}}
\Big[\overline{c}(y_{1},[x_{1}])u(y_{2},[x_{2}])+\overline{c}(y_{2},[x_{1}])u(y_{1},[x_{2}])\Big]
\label{g2}
\end{align}
Based on this we easily compute
\begin{lemma} \label{lmat}
The $\L$ matrices are given by
\begin{align*}
\L_{0, 1}(t,x_{1})&=\frac{1}{\sqrt{N}}\big<\Omega, \big(b_{x_1}+\sqrt{N}\phi(x_{1})\big)\Omega \big> =\phi(x_{1})
\\
\L_{0, 2}(t,x_{1},x_{2})
&=\frac{1}{N}\big<\Omega, \big(b_{x_1}+\sqrt{N}\phi(x_{1})\big)\big(b_{x_2}+\sqrt{N}\phi(x_{2})\big)\Omega\big>\\
& =
\frac{1}{N}(u\circ c)(x_{1},x_{2})+\phi(x_{1})\phi(x_{2})
\nonumber
\\
&=\frac{1}{2N}\psi(t,x_{1},x_{2})+\phi(t,x_{1})\phi(t,x_{2}) =\Lambda
\\
&\qquad {\rm where}\qquad \psi =\sht=2u\circ c 
\\
\L_{1,1}(t,x_{1};x_{2})&=\frac{1}{N}\big(\overline{u}\circ u)(x_{1},x_{2})+\phib(x_{1})\phi(x_{2})
\nonumber
\\
&=\frac{1}{2N}\omega(t,x_{1},x_{2})+\phib(t,x_{1})\phi(t,x_{2}) = \Gamma
\\
&\qquad {\rm where}\qquad \omega :=\chh(2k)-1 =2\overline{u}\circ u\quad ,
\\
\L_{1, 2}(x_1; x_2, x_3)&=\Gamma(x_1, x_2) \phi(x_3) + \frac{1}{2N} \psi (x_3, x_2)\overline \phi(x_1)+
\frac{1}{N} \overline u \circ u (x_1, x_3) \phi(x_2)
\\
\L_{2, 2}(y_1, y_2; x_1, x_2)&=\overline \Lambda(y_1, y_2) \Lambda(x_1, x_2) +
 \frac{1}{N^2} \int \overline f_1(y, y_1, y_2) f_1(y, x_1, x_2)dy\\
&+ \frac{1}{N^2} \int \overline f_2(y, z,  y_1, y_2) f_2(y, z, x_1, x_2) dy dz\\
\L_{1, 3}(y_1; x_1, x_2, x_3)& 
= \overline \Gamma (x_1, y_1) \Lambda(x_2, x_3) + \frac{1}{N^2}
\int  \overline g_1(y, x_1, y_1) f_1(y, x_2, x_3)dy\\
&+\frac{1}{N^2} \int \overline g_2(y, z,  x_1, y_1) f_2(y, z, x_2, x_3) dy dz
\end{align*}
where the integrals in the last two formulas can be trivially expressed in terms of $\psi$ and $\omega$, which in turn can be expressed in terms of $\phi$, $\Lambda$ and $\Gamma$.
\end{lemma}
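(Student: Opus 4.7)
The plan is to exploit the unitarity of $\M$ in order to reduce every $\L_{m,n}$ to a slot-by-slot Fock inner product of vectors that have already been computed in the text. Since $\M\M^{\ast}=\mathbb I$,
\begin{align*}
\L_{m,n}(y_{1},\dots,y_{m};x_{1},\dots,x_{n})
= \frac{1}{N^{(m+n)/2}}\big\langle \M^{\ast}a_{y_{1}}\cdots a_{y_{m}}\M\Omega,\ \M^{\ast}a_{x_{1}}\cdots a_{x_{n}}\M\Omega\big\rangle,
\end{align*}
and in every case these two factors can be further factored, using $\M\M^{\ast}=\mathbb I$ once more, into instances of $\M^{\ast}a_{x}\M\Omega$, $\M^{\ast}a_{x_{1}}a_{x_{2}}\M\Omega$ or $\M^{\ast}a_{x_{1}}^{\ast}a_{x_{2}}\M\Omega$, all of which have been given in closed form in \eqref{arot} and \eqref{f0}--\eqref{g2}. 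No further commutator manipulation is needed.

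The three scalar identities follow immediately by reading off the zero-particle slot of the right-hand factor (the left being $\Omega$). From \eqref{arot} this slot is $\sqrt N\phi(x_{1})$, so $\L_{0,1}=\phi$; from \eqref{f0} it is $f_{0}=N\Lambda$, so $\L_{0,2}=\Lambda$; from \eqref{g0} it is $g_{0}=N\Gamma$, so $\L_{1,1}=\Gamma$. The definitions $\psi=2u\circ c$ and $\omega=2\bar u\circ u$ recorded in the statement are just labels for the $(u\circ c)$ and $(\bar u\circ u)$ pieces of $f_{0}$ and $g_{0}$.

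For $\L_{1,2}$ I pair the two-slot vector $\M^{\ast}a_{x_{1}}\M\Omega=(\sqrt N\phi(x_{1}),u(\cdot,x_{1}),0,\dots)$ from \eqref{arot} against the three-slot vector $(f_{0},f_{1},f_{2},0,\dots)$ from \eqref{f0}--\eqref{f2}. Because Fock vectors from different particle-number sectors are orthogonal, the inner product reduces to a $0$-$0$ contribution $\sqrt N\,\overline{\phi(x_{1})}f_{0}(x_{2},x_{3})$ plus a $1$-$1$ contribution $\int dy\,\overline{u(y,x_{1})}f_{1}(y,x_{2},x_{3})$. After dividing by $N^{3/2}$ and substituting the explicit formula for $f_{1}$, one regroups using the definitions of $\Gamma$ and $\Lambda$ in terms of $\bar u\circ u$ and $u\circ c$, and uses the symmetry $\Lambda(x_{3},x_{2})=\Lambda(x_{2},x_{3})$ once, to match the stated expression.

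The identities for $\L_{2,2}$ and $\L_{1,3}$ come from the same slot-by-slot pairing, now of $(f_{0},f_{1},f_{2},0,\dots)$ with itself and of $(g_{0},g_{1},g_{2},0,\dots)$ with $(f_{0},f_{1},f_{2},0,\dots)$ respectively. The $0$-$0$ slot yields $\bar\Lambda(y_{1},y_{2})\Lambda(x_{1},x_{2})$ and $\bar\Gamma(x_{1},y_{1})\Lambda(x_{2},x_{3})$; the complex conjugation on the left entry of the inner product is precisely what turns the $\Gamma$ coming out of $g_{0}$ into the $\bar\Gamma$ displayed in the statement. The $1$-$1$ and $2$-$2$ slots then produce, verbatim, the integrals $(1/N^{2})\int\overline{f_{1}}f_{1}$, $(1/N^{2})\int\overline{f_{2}}f_{2}$ and $(1/N^{2})\int\overline{g_{1}}f_{1}$, $(1/N^{2})\int\overline{g_{2}}f_{2}$ written down in the statement. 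There is no analytic content in the lemma; the only step that requires real care, and hence the ``main obstacle'', is the bookkeeping: matching the $1/\sqrt 2$ symmetrization factors in \eqref{f2}, \eqref{g2} against the unsymmetrized Fock inner product on the two-particle sector, and consistently rewriting the composition kernels $u\circ c$, $\bar u\circ u$ in terms of $\Lambda,\Gamma,\psi,\omega$.
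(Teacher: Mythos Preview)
Your proof is correct and follows essentially the same approach as the paper: insert $\M\M^{\ast}=\mathbb{I}$ to rewrite each $\L_{m,n}$ as a Fock inner product of the already-computed vectors \eqref{arot}, \eqref{f0}--\eqref{g2}, then read off the slot-by-slot contributions. The only cosmetic difference is that for $\L_{1,2}$ the paper factors as $\langle \M^{\ast}a^{\ast}_{x_2}a_{x_1}\M\Omega,\ \M^{\ast}a_{x_3}\M\Omega\rangle$ (pairing the $g$-vector against the one-particle vector), whereas you factor as $\langle \M^{\ast}a_{x_1}\M\Omega,\ \M^{\ast}a_{x_2}a_{x_3}\M\Omega\rangle$ (pairing the one-particle vector against the $f$-vector); both splittings give the same answer after the regrouping you describe.
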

\begin{proof}
All calculations are straightforward. For instance,
\begin{align*}
&\L_{1, 2}(x_1; x_2, x_3)=\frac{1}{N^{3/2}}\big<\Omega,\M^{\ast}a^{\ast}_{x_1}a_{x_2}a_{x_3}\M\Omega \big>=\frac{1}{N^{3/2}}\big<\Omega, \M^{\ast}a^{\ast}_{x_1}a_{x_2}\M\M^{\ast}a_{x_3}\M\Omega \big>\\
&=\frac{1}{N^{3/2}}\big<\M^{\ast}a^{\ast}_{x_2}a_{x_1}\M\Omega, \M^{\ast}a_{x_3}\M\Omega \big>\\
&=\frac{1}{N}\overline g_0(x_2, x_1) \phi(x_3) + \int \overline g_1(y, x_2, x_1) u(y, x_3) dy\\
&=\Gamma(x_1, x_2) \phi(x_3) + \frac{1}{N} u \circ c (x_3, x_2)\overline \phi(x_1) +\frac{1}{N} \overline u \circ u (x_1, x_3) \phi(x_2)\\
&=\Gamma(x_1, x_2) \phi(x_3) + \frac{1}{2N} \sht (x_3, x_2)\overline \phi(x_1)+
\frac{1}{N} \overline u \circ u (x_1, x_3) \phi(x_2)
\end{align*}
\begin{align*}
&\L_{2, 2}(y_1, y_2; x_1, x_2)=
\frac{1}{N^2}\big<\M^{\ast}a_{y_1}a_{y_2}\M\Omega, \M^{\ast}a_{x_1}a_{x_2}\M\Omega\big>\\
&= \frac{1}{N^2}\overline f_0 (y_1, y_2) f_0(x_1, x_2) + \frac{1}{N^2} \int \overline f_1(y, y_1, y_2) f_1(y, x_1, x_2)dy\\
&+ \frac{1}{N^2} \int \overline f_2(y, z,  y_1, y_2) f_2(y, z, x_1, x_2) dy dz
\end{align*}

\end{proof}

With these ingredients at hand we proceed to write down the evolution of $\L_{1}=\phi$ :
\begin{align*}
&\left\{\frac{1}{i}\partial_{t}-\Delta_{x_1}
+\int dy\left\{
v_N(x_{1}-y)\Gamma(y,y)\right\}\right\}\phi(x_{1})=
\\
&-\frac{1}{2N}\int dy\left\{v_N(x_{1}-y)\phi(y)\omega(y,x_{1})
+v_N(x_{1}-y)\overline{\phi}(y)\psi(x_{1},y)\right\}
\end{align*}
and we can eliminate $\psi$ and $\omega$ by the substitution,
\begin{align*}
\omega &=2N\big(\Gamma-\overline{\phi}\otimes\phi\big)
\\
\psi &=2N\big(\Lambda-\phi\otimes\phi\big)
\end{align*}
so that we have a system involving only $\Lambda$, $\Gamma$ and $\phi$ matrices.

The evolution of $\Lambda$ is given by the expression below:
\begin{align*}
&\left\{\frac{1}{i}\partial_{t}-\Delta_{x_1}-\Delta_{x_2}+\frac{1}{N}v(x_{1}-x_{2})
+\int dy\left\{\left(v_N(x_{1}-y)+v_N(x_{2}-y)\right)\Gamma(y,y)\right\}\right\}\Lambda=
\\
&-\frac{1}{2N}\int dy\left\{\big(v_N(x_{1}-y)+v_N(x_{2}-y)\big)\phi(y)\Big(\omega(y,x_{1})\phi(x_{2})+\omega(y,x_{2})\phi(x_{1})\Big)\right\}
\\
&-\frac{1}{2N}\int dy\left\{\big(v_N(x_{1}-y)+v_N(x_{2}-y)\big)\overline{\phi}(y)\Big(\psi(x_{1},y)\phi(x_{2})+\psi(x_{2},y)\phi(x_{1})\Big)\right\}
\\
&-\frac{1}{4N^{2}}\int dy\left\{\big(v_N(x_{1}-y)+v_N(x_{2}-y)\big)\Big(\psi(x_{1},y)\omega(y,x_{2})+\omega(y,x_{1})\psi(x_{2},y)\Big)
\right\}
\end{align*}

 Finally the evolution of $\Gamma$ is given by:
\begin{align*}
&\left\{\frac{1}{i}\partial_{t}+\Delta_{x_1}-\Delta_{x_2}\right\}\Gamma=
\int dy\left\{\big(v_N(x_{1}-y)-v_N(x_{2}-y)\big)\overline{\Lambda}(x_{1},y)\Lambda(y,x_{2})\right\}+
\\
&\frac{1}{2N}\int dy\left\{\big(v_N(x_{1}-y)-v_N(x_{2}-y)\big)\Big(
\phi(y)\omega(y,x_{2})\overline{\phi}(x_{1})+\overline{\phi}(y)\omega(x_{1},y)\phi(x_{2})\Big)\right\} +
\\
&\frac{1}{2N}\int dy\left\{\big(v_N(x_{1}-y)-v_N(x_{2}-y)\big)\Big(
\vert\phi(y)\vert^{2}\omega(x_{1},x_{2})+\omega(y,y)\overline{\phi}(x_{1})\phi(x_{2})\Big)\right\}+
\\
&\frac{1}{4N^{2}}\int dy\left\{\big(v(x_{1}-y)-v(x_{2}-y)\big)\Big(\omega(x_{1},y)\omega(y,x_{2})+\omega(y,y)\omega(x_{1},x_{2})\Big)
\right\}
\end{align*}

If we substitute $\psi =2N\big(\Lambda-\phi\otimes\phi\big)$ and
$\omega=2N\big(\Gamma-\overline{\phi}\otimes\phi\big)$ we obtain the equations of
 Theorem \eqref{equivthm}.

\begin{remark} Using  the ideas in this section one can easily show that the expected  number of particles
for our approximation,  $\big<\M \Omega,
\mathcal{N} \M\Omega\big>$
(where $\mathcal{N}= \int a_x^*a_x dx$),
as well as the energy $\big<\M \Omega,
\H \M\Omega\big>$ are constant in time. This provides an easier proof of some of the results of section 8 of \cite{GMM}.
\end{remark}

\section{Estimates}\label{estsection}

In this paper, $\S$ and $\S_{\pm}$ stand for the pure differential operators, so that the operators of Theorem \eqref{mainthm}
are
$\tilde{\S} = \S + \, potential \, \, terms$, $\tilde{\W} = \S_{\pm} + \, potential \, \, terms$
\begin{align*}
&\S=\frac{1}{i}\partial_{t} -\Delta\, \, \mbox{ (in $6+1$ or $3+1$ dimensions, as will be clear from the context)}\\
&\S_{\pm}=\frac{1}{i}\partial_{t} -\Delta_x + \Delta_y
\end{align*}
and $potential \, \, terms$ are given by composition with
$v_N(x-y)\Gamma(x, y)$ and multiplication by $v_N* Tr \Gamma$.

The symbol of $\S$ is $\tau +|\xi|^2$ or $\tau  +|\xi|^2  +|\eta|^2$, depending on dimensions, and the symbol of $S_{\pm}$ is
$\tau  +|\xi|^2  -|\eta|^2$.
We will use the following norms:
\begin{align*}
&\|f\|_{X^{\delta}}=\|\left(1+|\tau +|\xi|^2|\right)^{\delta} \hat f(\tau, \xi)\|_{L^2}\\
&\|f\|_{X_S^{\delta}}=\|\left(1+|\tau +|\xi|^2 +|\eta|^2|\right)^{\delta} \hat f(\tau, \xi, \eta)\|_{L^2}\\
&\|f\|_{X_W^{\delta}}=\|\left(1+|\tau +|\xi|^2 -|\eta|^2|\right)^{\delta} \hat f(\tau, \xi, \eta)\|_{L^2}
\end{align*}
and refer to  \cite{taobook}, section 2.6 for their history and properties. Of special importance is the following result:
(proposition 2.12 in \cite{taobook}):
if $\S u =f$, $\delta >0$ and $\chi(t)$ is a fixed $C^{\infty}_0$, cut-off function, then
\begin{align}
\|\chi(t)u\|_{X^{\frac{1}{2}+\delta}} \lesssim_{\chi, \delta} \|u(0, \cdot)\|_{L^2}+  \|f\|_{X^{-\frac{1}{2}+ \delta}}\label{cutoff}
\end{align}

For the rest of the paper, we will use the standard notation $A \lesssim_{\chi, \delta} B$ to mean "there exists a constant $C$ depending of $\chi$ and $\delta$ such that $A \le C B$.

 We will also use freely the general principle that if an $L^p$ estimate holds for solutions to the homogeneous equation, it also holds in $X^{1/2+\delta}$ spaces, see Lemma 2.9 in \cite{taobook}.

 To get started, fix $w \in \mathcal S$ such that $|\hat v| \le \hat w $
 and
  fix $\epsilon>0$ depending on $ \beta<2/3$ so that   $<\nabla_x>^{\frac{1}{2}+\epsilon}<\nabla_y>^{\frac{1}{2}+\epsilon} \frac{1}{N} w_N(x-y)$,
 which is a function of $x-y$,
satisfies
\begin{align*}
  \|<\nabla_x>^{\frac{1}{2}+\epsilon}<\nabla_y>^{\frac{1}{2}+\epsilon} \frac{1}{N} w_N\|_{L^{6/5}(d(x-y)} \le \frac{C}{N^{small \, \, power}}
   \end{align*}

The basic space-time  collapsing estimates, in the spirit of \cite{K-MMM}, are:
\begin{lemma} \label{spacetime}
If $\S \Lambda=0$ then
\begin{align}
&\|| \Lambda(t, x, x) \|_{L^2(dt dx )} \lesssim \||\nabla|_{x-y}^{1/2}  \Lambda_0(x, y)\|_{L^2(dx dy)}
\label{lowerlambda}\\
&\||\nabla|_x^{1/2} \Lambda(t, x, x) \|_{L^2(dt dx )} \lesssim \||\nabla|_x^{1/2} |\nabla|_y^{1/2} \Lambda_0(x, y)\|_{L^2(dx dy)}
\label{higherlambda}
\end{align}
As a consequence, if  $\S \Lambda=F$ and $\delta>0$,  then
\begin{align}
&\sup_z\|<\nabla_x>^{\frac{1}{2}+\epsilon} \chi(t)\Lambda(t, x, x+z) \|_{L^2(dt dx )} \label{higherlambdaX} \\
&\lesssim_{\delta } \|<\nabla_x>^{\frac{1}{2}+\epsilon} <\nabla_y>^{\frac{1}{2}+\epsilon }\chi(t)\Lambda \|_{X_S^{1/2+ \delta}} \notag \\
 &\lesssim_{\delta } \|<\nabla_x>^{\frac{1}{2}+\epsilon}<\nabla_y>^{\frac{1}{2}+\epsilon} \Lambda_0(x, y)\|_{L^2(dx dy)}\notag\\
 &
+ \|<\nabla_x>^{\frac{1}{2}+\epsilon}<\nabla_y>^{\frac{1}{2}+\epsilon } F\|_{X_S^{-1/2+ \delta}} \notag
\end{align}
If
 $\S_{\pm} \Gamma=0$, then
\begin{align}
&\||\nabla_x|^{\frac{1}{2}+\epsilon} \Gamma(t, x, x) \|_{L^2(dt dx)} + \notag
\||\nabla_x|^{1/2} \Gamma(t, x, x) \|_{L^2(dt dx)}\\
&\lesssim_{\epsilon} \|<\nabla_x>^{\frac{1}{2}+\epsilon}<\nabla_y>^{\frac{1}{2}+\epsilon}  \Gamma_0(x, y)\|_{L^2(dx dy)}
\label{gammab}
\end{align}

As a consequence, if
 $\S_{\pm} \Gamma=F$, then
\begin{align}
&\sup_z\||\nabla_x|^{\frac{1}{2}+\epsilon}\chi(t) \Gamma(t, x, x+z) \|_{L^2(dt dx)} \notag
+\sup_z\||\nabla_x|^{1/2}\chi(t) \Gamma(t, x, x+z) \|_{L^2(dt dx)}
\\
&\lesssim_{\epsilon, \delta }<\nabla_x>^{\frac{1}{2}+\epsilon}<\nabla_y>^{\frac{1}{2}+\epsilon }\chi(t) \Gamma \|_{X_W^{1/2+ \delta}}\notag \\
&\lesssim_{\epsilon, \delta } \|<\nabla>_x^{\frac{1}{2}+\epsilon}<\nabla_y>^{\frac{1}{2}+\epsilon}  \Gamma_0(x, y)\|_{L^2(dx dy)}\notag\\
&
+\|<\nabla_x>^{\frac{1}{2}+\epsilon}<\nabla_y>^{\frac{1}{2}+\epsilon} F\|_{X_W^{-1/2+ \delta }}
\label{gammabX}
\end{align}
\end{lemma}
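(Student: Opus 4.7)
The plan is to establish the four homogeneous estimates \eqref{lowerlambda}, \eqref{higherlambda} and \eqref{gammab} first by direct Fourier analysis on the appropriate mass shell, and then deduce the $X^{1/2+\delta}$ inequalities \eqref{higherlambdaX} and \eqref{gammabX} as standard consequences of the transference principle (Lemma 2.9 in \cite{taobook}) combined with the cut-off estimate \eqref{cutoff}.

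For a free solution of $\S\Lambda=0$, I would write $\hat\Lambda(t,\xi,\eta)=e^{-it(|\xi|^2+|\eta|^2)}\hat\Lambda_0(\xi,\eta)$ and compute the space-time Fourier transform (in $t$ and $x$) of $\Lambda(t,x,x+z)$; setting $\eta=k-\xi$, this equals $\int \delta(\tau+|\xi|^2+|k-\xi|^2)\,e^{iz(k-\xi)}\,\hat\Lambda_0(\xi,k-\xi)\,d\xi$. For each fixed $(\tau,k)$ the mass shell is a sphere in $\xi$-space of radius $r=(-\tau/2-|k|^2/4)^{1/2}$ centered at $k/2$, on which $|\xi-\eta|=2r$ is constant. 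Plain Cauchy--Schwarz on this sphere (which has surface measure $\int\delta\,d\xi=\pi r=\frac{\pi}{2}|\xi-\eta|$) and the change of variables $\eta=k-\xi$ then yield \eqref{lowerlambda} directly. For \eqref{higherlambda} I would apply Cauchy--Schwarz with weight $|\xi|^{1/2}|k-\xi|^{1/2}$; the required auxiliary bound is
\begin{equation*}
|k|\int \delta(\tau+|\xi|^2+|k-\xi|^2)\,\frac{d\xi}{|\xi|\,|k-\xi|}\ \lesssim\ 1
\end{equation*}
uniformly in $(\tau,k)$, which follows by parametrising the sphere via the polar angle $\theta$ to $k$ and observing that the integral reduces to $(\pi/|k|)\arcsin\bigl(|k|r/(|k|^2/4+r^2)\bigr)\le \pi^2/(2|k|)$ by AM--GM.

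The hyperbolic estimate \eqref{gammab} is more delicate because the mass shell $\tau+|\xi|^2-|k-\xi|^2=0$ is now the hyperplane $\{\xi:k\cdot\xi=(|k|^2-\tau)/2\}$, which is non-compact. I would parametrise $\xi=(C_0/|k|^2)k+\xi_\perp$ with $C_0=(|k|^2-\tau)/2$ and apply Cauchy--Schwarz with the inhomogeneous weight $<\xi>^{1/2+\epsilon}<k-\xi>^{1/2+\epsilon}$. Writing $a=C_0/|k|$, $b=|k|-a$, and using the elementary inequality $(1+a^2+s)(1+b^2+s)\ge\bigl(\sqrt{(1+a^2)(1+b^2)}+s\bigr)^2$, the transverse integral (in polar coordinates on the hyperplane) reduces to an expression of the form $C_\epsilon\,|k|^{-1}\bigl((1+a^2)(1+b^2)\bigr)^{-(1/2+2\epsilon)}$. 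Combining with the weight $<k>^{1+2\epsilon}$ and using $a^2+b^2\gtrsim |k|^2$ for $|k|$ large, together with trivial inhomogeneous bounds for $|k|$ bounded, delivers the required uniform control; the $|\nabla_x|^{1/2}$ variant is the same argument without the $\epsilon$-power.

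Finally, for the $X^{1/2+\delta}$ versions, the operator sending initial data to the space-time trace $|\nabla_x|^{1/2+\epsilon}\Lambda(\cdot,x,x+z)$ is translation-invariant in $(t,x)$, so by transference the $L^2_{t,x}$ bound just established for free solutions lifts automatically to the corresponding $X_S^{1/2+\delta}$ (resp. $X_W^{1/2+\delta}$) bound, giving the first inequality in each of \eqref{higherlambdaX} and \eqref{gammabX}; the second inequality then follows from \eqref{cutoff} applied to $\chi(t)\Lambda$ (resp. $\chi(t)\Gamma$). Uniformity in $z$ is automatic because the translation contributes only the unimodular phase $e^{iz(k-\xi)}$, which drops out of the Cauchy--Schwarz step. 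The main technical obstacle is really the hyperbolic case: the shell being non-compact, homogeneous weights $|\xi|^{1/2}$ fail at large transverse $|\xi_\perp|$, and the inhomogeneous $<\nabla>^{1/2+\epsilon}$ with $\epsilon>0$ is genuinely needed to secure both integrability in $\xi_\perp$ and uniformity in $(\tau,k)$---this is precisely why \eqref{gammab} and \eqref{gammabX} carry the $\epsilon$-loss that \eqref{higherlambda} avoids.
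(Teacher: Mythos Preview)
Your proof is essentially the paper's own argument: take the space--time Fourier transform, apply Cauchy--Schwarz on the mass shell with the natural weight, and reduce to a uniform bound on a $\delta$-integral; then invoke transference and \eqref{cutoff} for the $X^{1/2+\delta}$ consequences. Your explicit $\arcsin$ evaluation of $|k|\int\delta(\tau+|\xi|^2+|k-\xi|^2)\,|\xi|^{-1}|k-\xi|^{-1}\,d\xi$ is in fact cleaner than the paper's treatment of \eqref{higherlambda}, which instead splits into the cases $|\xi-\eta|\sim|\xi+\eta|$ and $|\xi-\eta|\ll|\xi+\eta|$ and bounds each separately.

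One point to tighten in the hyperbolic case \eqref{gammab}: your transverse integral actually evaluates to $C_\epsilon\,|k|^{-1}\bigl((1+a^2)(1+b^2)\bigr)^{-\epsilon}$, not the exponent $-(1/2+2\epsilon)$ you wrote; combined with $(1+a^2)(1+b^2)\ge 1+a^2+b^2\ge 1+|k|^2/2$ (since $a+b=|k|$) this still gives the required bound $|k|^{2\epsilon}M^{-2\epsilon}\lesssim 1$. More importantly, the sentence ``the $|\nabla_x|^{1/2}$ variant is the same argument without the $\epsilon$-power'' is dangerous if read literally: with $\epsilon=0$ in the \emph{weight} the transverse integral $\int_{\mathbb R^2}(M+|\xi_\perp|^2)^{-1}\,d\xi_\perp$ diverges logarithmically. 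What works (and what the statement of \eqref{gammab} allows, since the right-hand side carries $\langle\nabla\rangle^{1/2+\epsilon}$) is to keep the weight $\langle\xi\rangle^{1/2+\epsilon}\langle k-\xi\rangle^{1/2+\epsilon}$ and only drop the $\epsilon$ from the $|k|$-power on the left; then one needs merely $|k|\cdot C_\epsilon|k|^{-1}M^{-2\epsilon}\lesssim 1$, which is trivial. The paper achieves the same end by first peeling off $|\xi|^{\epsilon/2}\lesssim|\xi-\eta|^{\epsilon/2}+|\xi+\eta|^{\epsilon/2}$ before applying Cauchy--Schwarz.
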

\begin{remark} Notice that the estimates for $\Lambda$ are  different from those for $\Gamma$ at low frequencies. We cannot
estimate $\| \Gamma(t, x, x) \|_{L^2(dt dx)}$. The "lowest" derivative we can estimate
in \eqref{gammab}
is $\||\nabla_x|^{1/2} \Gamma(t, x, x) \|_{L^2(dt dx)}$.
\end{remark}

\begin{proof}
The same  method, inspired by \cite{KM0}, works for both. Let $\widetilde{\Lambda}$ denote the space-time Fourier transform of
$\Lambda$.
For \eqref{lowerlambda}
\begin{align*}
&|\widetilde{\Lambda(t, x, x)}(\tau, \xi)|^2 \\
&\lesssim
 \int \delta(\tau-|\xi-\eta|^2 -|\xi+\eta|^2)\frac{1}{|\eta|}d \eta \\
 &\int \delta(\tau-|\xi-\eta|^2 -|\xi+\eta|^2)|\widehat{\nabla_{x-y}^{\frac{1}{2}} \Lambda_0}(\xi-\eta, \xi+\eta)|^2 d \eta
\end{align*}
In order to prove the estimate, we must show
\begin{align*}
\sup_{\tau, \xi} \int \delta(\tau -|\xi|^2 - |\eta|^2)\frac{1}{|\eta|} d \eta \lesssim 1
\end{align*}
which is obvious.
For \eqref{higherlambda}
\begin{align*}
&|\nabla_x^{1/2}\widetilde{\Lambda(t, x, x)}(\tau, \xi)|^2 \\
&\lesssim
 \int \delta(\tau-|\xi-\eta|^2 -|\xi+\eta|^2)\frac{|\xi|}{|\xi-\eta||\xi+\eta|}d \eta \\
 &\int \delta(\tau-|\xi-\eta|^2 -|\xi+\eta|^2)|\widehat{\nabla_x^{\frac{1}{2}} \nabla_y^{\frac{1}{2}}\Lambda_0}(\xi-\eta, \xi+\eta)|^2 d \eta
\end{align*}
In order to prove the estimate, we must show
\begin{align*}
\sup_{\tau, \xi} \int \delta(\tau -|\xi|^2 - |\eta|^2)\frac{|\xi|}{|\xi-\eta||\xi+\eta|} d \eta \lesssim 1
\end{align*}
Without loss of generality, consider the region $|\xi-\eta| \le |\xi + \eta|$. If $|\xi-\eta| \sim |\xi + \eta|$,
$\frac{|\xi|}{|\xi-\eta||\xi+\eta|} \lesssim \frac{1}{|\eta|}  $ and the integral can be evaluated in polar coordinates.
If $|\xi-\eta| << |\xi + \eta|$ then $|\xi| \sim |\eta|$
Writing $\frac{|\xi|}{|\xi-\eta||\xi+\eta|} \lesssim \frac{1}{|\xi-\eta|} \lesssim \frac{1}{|\eta|\sqrt{1-\cos(\theta)}}$
  where $\theta$ is the angle between $\xi$ and $\eta$, we estimate

\begin{align*}
\sup_{\tau} \int_0^{\pi}\int \delta(\tau - \rho^2)\frac{1}{\rho\sqrt{1-\cos(\theta)}} \rho^2d \rho \sin(\theta) d \theta \lesssim
1
\end{align*}
For \eqref{gammab}
\begin{align*}
&|\widetilde{|\nabla_x|^{\frac{1+\epsilon}{2}}\Gamma(t, x, x)}(\tau, \xi)|= c
|\int \delta(\tau-|\xi-\eta|^2 +|\xi+\eta|^2)|\xi|^{\frac{1+\epsilon}{2}}\hat \Gamma_0(\xi-\eta, \xi+\eta) d \eta|\\
&\lesssim \int \delta(\tau-|\xi-\eta|^2 +|\xi+\eta|^2)\left(|\xi-\eta|^{\frac{\epsilon}{2}}+|\xi+\eta|^{\frac{\epsilon}{2}}\right)
|\xi|^{\frac{1}{2}}
|\hat \Gamma_0(\xi-\eta, \xi+\eta)| d \eta
\end{align*}
Here we use the straightforward estimate
\begin{align*}
I=\sup_{\tau, \xi} \int \delta(\tau - \xi \cdot \eta)\frac{|\xi||\xi\pm\eta|^{\epsilon}}{<\xi-\eta>^{1+\epsilon}<\xi+\eta>^{1+\epsilon}} d \eta \lesssim 1
\end{align*}
To prove this, take $\xi = (|\xi|, 0, 0)$. Then
\begin{align*}
&\int \delta(\tau - \xi \cdot \eta)\frac{|\xi||\xi-\eta|^{\epsilon}}{<\xi-\eta>^{1+\epsilon}<\xi+\eta>^{1+\epsilon}} d \eta\\
&=\int \delta(\tau - |\xi|  \eta_1) \frac{|\xi|}{<\xi-\eta><\xi+\eta>^{1+\epsilon}}d \eta
\lesssim \int_{\mathbb R^2} \frac{1}{<\eta_{2, 3}>^{2+\epsilon}} d \eta_2 d \eta_3 \lesssim 1
\end{align*}

\end{proof}

Next, we record some Strichartz type estimates

\begin{lemma}
The following estimate holds
 \begin{align}
 \|e^{i t (\Delta_x \pm \Delta_y)} f\|_{L^2(dt) L^6(dx) L^2 (d y)} \lesssim \|f\|_{L^2}. \label{Xuwenest}\\
 \|e^{i t (\Delta_x \pm \Delta_y)} f\|_{L^2(dt) L^{\infty}(dx) L^2 (d y)} \lesssim \|<\nabla_x>^{1/2 + \epsilon}f\|_{L^2}. \label{Xuwenestinf}
 \end{align}
 and, as a consequence
 \begin{align}
 \|F\|_{L^2(dt) L^6(dx) L^2 (d y)} \lesssim_{\delta} \|F\|_{X^{1/2+\delta}} \label{XXuwenest}
 \end{align}
 where $X$ can be either $X_S$ or $X_W$.
 \end{lemma}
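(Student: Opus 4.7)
The key observation is that $\Delta_y$ commutes with $\Delta_x$ and, after Fourier transform in $y$, acts by the unimodular multiplier $e^{\mp i t|\eta|^2}$. This lets us peel off the $y$-dependence and reduce all three estimates to classical $3$-dimensional tools. I expect no real obstacle here; the statement is essentially a packaging of endpoint Strichartz, Bernstein, and the $X^{s,b}$ transference principle already invoked in the paper.

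For \eqref{Xuwenest}, I would first apply Minkowski's integral inequality to the squared norm,
\begin{align*}
\|g\|_{L^6(dx)L^2(dy)}^2 \le \int \|g(\cdot,y)\|_{L^6(dx)}^2\,dy,
\end{align*}
with $g(t,x,y)=e^{it(\Delta_x\pm\Delta_y)}f(x,y)$. Taking Fourier transform in $y$ and using that $e^{\pm it\Delta_y}$ is unitary on $L^2_y$, one sees that for each fixed $y$ the function $g(\cdot,\cdot,y)$ equals $e^{it\Delta_x}$ applied to an $L^2_x$ function whose $L^2_x$ norm equals $\|f(\cdot,y)\|_{L^2_x}$ (after integrating in $y$ and using Plancherel). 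The endpoint $3$-dimensional Strichartz inequality $\|e^{it\Delta_x}h\|_{L^2_tL^6_x}\lesssim\|h\|_{L^2_x}$ applied inside the $dy$-integral then gives \eqref{Xuwenest}, uniformly in the choice of sign $\pm$.

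For \eqref{Xuwenestinf}, I would decompose $f=\sum_N f_N$ into Littlewood--Paley pieces in the $x$-variable and apply Bernstein's inequality $\|u_N\|_{L^\infty_x}\lesssim N^{1/2}\|u_N\|_{L^6_x}$ (valid in $3$ spatial dimensions for functions spectrally supported in $|\xi|\sim N$) to each piece of $u=e^{it(\Delta_x\pm\Delta_y)}f$. Combined with \eqref{Xuwenest} applied piece by piece, this yields $\|u_N\|_{L^2_t L^\infty_x L^2_y}\lesssim N^{1/2}\|f_N\|_{L^2}$. Summing via Cauchy--Schwarz with weights $N^{\pm\epsilon/2}$ absorbs the logarithmic divergence and produces the claimed $\tfrac{1}{2}+\epsilon$ derivative loss. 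Finally, \eqref{XXuwenest} is an immediate application of the transference principle (Lemma 2.9 of \cite{taobook}, already invoked in the paper): since \eqref{Xuwenest} holds uniformly for both generators $e^{it(\Delta_x+\Delta_y)}$ and $e^{it(\Delta_x-\Delta_y)}$, it transfers with an $\epsilon$-loss to both $X_S^{1/2+\delta}$ and $X_W^{1/2+\delta}$, giving the stated bound.
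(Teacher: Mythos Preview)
Your approach is correct and matches the paper's: reduce to $3$-dimensional endpoint Strichartz by exploiting unitarity of $e^{\pm it\Delta_y}$ on $L^2_y$, then Sobolev/Bernstein for the $L^\infty$ version, then the $X^{s,b}$ transference principle for \eqref{XXuwenest}. One small correction to your write-up of \eqref{Xuwenest}: the order of operations should be reversed. Take the inner $L^2_y$ norm \emph{first}---unitarity of $e^{\pm it\Delta_y}$ gives $\|g(t,x,\cdot)\|_{L^2_y}=\|e^{it\Delta_x}f(x,\cdot)\|_{L^2_y}$ pointwise in $(t,x)$---and only then apply Minkowski to pass from $L^6_xL^2_y$ to $L^2_yL^6_x$; your phrase ``for each fixed $y$'' is not accurate, since $e^{\pm it\Delta_y}$ is nonlocal in $y$ and the reduction works at the level of $L^2_y$ norms (or, equivalently, for each fixed frequency $\eta$ after Fourier transform in $y$), not pointwise in physical $y$.
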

 \begin{proof}
 We argue as follows:
\begin{align*}
&\|\|e^{i t (\Delta_x \pm \Delta_y)} f\|_{L^2 (d y)}\|_{L^2(dt) L^{6}(dx)}=
\|\|e^{i t \Delta_x } f\|_{L^2 (d y)}\|_{L^2(dt) L^{6}(dx)}\\
&\lesssim \|\|e^{i t \Delta_x } f\|_{L^2(dt) L^{6}(dx)}\|_{L^2 (d y)}\\
&\lesssim \|\|e^{i t \Delta_x } f\|_{L^2(dt) L^{6}(dx)}\|_{L^2 (d y)}
\lesssim \|f\|_{L^2(dx dy)} \, \, \, \mbox{  $3+1$
end-point Strichartz \cite{K-T}}
\end{align*}
The proof of \eqref{Xuwenest} is similar, using Sobolev.
This type of estimate has first appeared, we believe, in \cite{XC1}. Versions of it were used in \cite{C-H1}, \cite{C-H2},
 \cite{elif2}.
\end{proof}

We will need the following refinements. Notice that for $\S$ we can choose $x$, $y$ coordinates or
$x+y$, $x-y$ coordinates, but this is not possible for $\S_{\pm}$.

\begin{lemma}
For each $0<\delta < \frac{1}{2}$ there exists  $ 6/5+>6/5$ a number which can be chosen arbitrarily close to $6/5$ if $\delta$ is small such that
the following estimate holds
\begin{align}
&\|\Lambda\|_{X_S^{-\frac{1}{2}+\delta}} \lesssim_{\delta} \|\Lambda\|_{L^{2}L^{6/5+}(x-y)L^2(x+y)} \label{main}\\
\end{align}
\end{lemma}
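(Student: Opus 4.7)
The plan is to prove the equivalent dual statement and then reduce to a standard Strichartz/transfer argument in the coordinates $u=x+y$, $v=x-y$, closing the gap between $L^2L^6L^2$ and $L^2L^{6-}L^2$ by complex interpolation.

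Since $X_S^{-1/2+\delta}$ is dual to $X_S^{1/2-\delta}$, the stated estimate is equivalent, by Hölder's inequality in the $v=x-y$ variable, to
\begin{align*}
\|\phi\|_{L^2(dt)L^{6-}(d(x-y))L^2(d(x+y))} \,\lesssim_{\delta}\, \|\phi\|_{X_S^{1/2-\delta}},
\end{align*}
where $6-$ denotes the Hölder conjugate of $6/5+$. So it suffices to establish the displayed embedding.

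First I would change variables to $u=x+y$, $v=x-y$. Since $\Delta_x+\Delta_y=2(\Delta_u+\Delta_v)$, the symbol of $\S$ becomes $\tau+2(|\xi_u|^2+|\xi_v|^2)$, which is comparable to $\tau+|\xi|^2+|\eta|^2$ (on the Fourier side the change of variables is also linear), so the $X_S^{b}$ norm is equivalent to its analogue in the $(u,v)$ variables. For the \emph{homogeneous} evolution $e^{2it(\Delta_u+\Delta_v)}f$, freezing $u$ and applying the $3+1$ endpoint Strichartz estimate of \cite{K-T} in the $v$ variable gives $\|e^{2it\Delta_v}f(\cdot,u)\|_{L^2(dt)L^6(dv)}\lesssim \|f(\cdot,u)\|_{L^2(dv)}$; then Minkowski's inequality (since $2\le 6$) followed by the $L^2(du)$ isometry of $e^{2it\Delta_u}$ yields
\begin{align*}
\|e^{2it(\Delta_u+\Delta_v)}f\|_{L^2(dt)L^6(dv)L^2(du)}\lesssim \|f\|_{L^2(du\,dv)},
\end{align*}
exactly in the spirit of \eqref{Xuwenest}. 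By the general transfer principle for $X^{s,b}$ spaces (Lemma 2.9 of \cite{taobook}, already used implicitly in this section), for every $\epsilon_0>0$
\begin{align*}
\|\phi\|_{L^2(dt)L^6(dv)L^2(du)} \lesssim_{\epsilon_0} \|\phi\|_{X_S^{1/2+\epsilon_0}}.
\end{align*}

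The last step is to interpolate this with the trivial Plancherel identity $\|\phi\|_{L^2L^2L^2}=\|\phi\|_{X_S^{0}}$. Both the domain spaces $X_S^b$ (weighted $L^2$ spaces on the Fourier side) and the target spaces $L^2L^pL^2$ (iterated Lebesgue spaces) form complex interpolation scales in the obvious way, so for $\theta\in(0,1)$
\begin{align*}
\|\phi\|_{L^2(dt)L^{p_\theta}(dv)L^2(du)} \,\lesssim\, \|\phi\|_{X_S^{\theta(1/2+\epsilon_0)}}, \qquad \tfrac{1}{p_\theta}=\tfrac{1-\theta}{2}+\tfrac{\theta}{6}.
\end{align*}
Choosing $\theta=(1/2-\delta)/(1/2+\epsilon_0)$ with $\epsilon_0$ small, the right-hand exponent becomes $1/2-\delta$ and $p_\theta$ is strictly less than $6$ but can be made arbitrarily close to $6$ as $\delta\to 0$, which is exactly $6-$. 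Dualizing yields the claim, with the number $6/5+$ on the right being the Hölder conjugate of $p_\theta$ and arbitrarily close to $6/5$ as $\delta\to 0$.

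The main obstacle is really only bookkeeping: ensuring that the Bourgain-space complex interpolation is applied cleanly (it is standard, since $X_S^b$ is just a weighted $L^2$ space on the frequency side) and that the relationship between $\delta$, $\epsilon_0$, $\theta$, and the resulting $p_\theta$ gives the claimed $6/5+$. The genuine analytic input — the endpoint Strichartz estimate combined with Minkowski to handle the extra $L^2(du)$ — is exactly the same ingredient already exploited in \eqref{Xuwenest}, so no new PDE estimate is required.
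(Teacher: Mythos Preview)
Your proof is correct and follows essentially the same route as the paper: both arguments interpolate the Strichartz-type embedding $L^2L^6L^2 \lesssim X_S^{1/2+}$ (in the $(x-y,x+y)$ coordinates) with the trivial Plancherel identity $L^2L^2L^2 = X_S^0$. The only cosmetic difference is that the paper first passes to the dual estimate $\|\Lambda\|_{X_S^{-1/2-\delta}}\lesssim \|\Lambda\|_{L^2L^{6/5}L^2}$ and then interpolates, whereas you dualize, interpolate on the direct side, and dualize back; since complex interpolation commutes with duality in this setting, the two are equivalent.
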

\begin{proof}
This is proved by interpolating the estimate dual to \eqref{XXuwenest}
\begin{align}
&\|\Lambda\|_{X_S^{-\frac{1}{2}-\delta}} \lesssim \|\Lambda\|_{L^{2}L^{6/5}(x-y)L^2(x+y)}
\label{dualxuwen}
\end{align}
with
\begin{align}
&\|\Lambda\|_{X_S^{-\frac{1}{2}+\frac{1}{2}}} =\|\Lambda\|_{L^2L^{2}(x-y)L^2(x+y)}  \label{triv}
\end{align}

\end{proof}
Also we will need the closely related
\begin{lemma}
For each $0<\delta < \frac{1}{2}$ there exist numbers $2-$, $6/5+$ arbitrarily close to $2$, $6/5$ if $\delta$ is close to $0$, so that
 \begin{align}
 \|F\|_{X_W^{-1/2+\delta}} \lesssim_{\delta}  \|F\|_{L^{2-}(dt) L^{6/5+}(dx) L^2 d (y)} \label{dual}
 \end{align}
\end{lemma}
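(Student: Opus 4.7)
My plan is to mirror the proof of the companion estimate \eqref{main} in the $X_S$ setting, replacing the role of the operator $\S$ in $6+1$ dimensions (with the $x\pm y$ splitting) by $\S_{\pm}$ acting in the $x$, $y$ variables separately.

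First, I would establish a dual Strichartz bound in the $X_W$ scale. Starting from the end-point $L^2(dt)L^6(dx)$ Strichartz estimate for $e^{it\Delta_x}$ and using Minkowski (placing $L^2(dy)$ \emph{outside} $L^2(dt)L^6(dx)$, valid since $6\ge 2$), together with the fact that $e^{-it\Delta_y}$ is unitary on $L^2(dy)$, I get
\begin{align*}
\|e^{it(\Delta_x-\Delta_y)}f\|_{L^2(dt)L^6(dx)L^2(dy)}\lesssim \|f\|_{L^2},
\end{align*}
exactly analogous to \eqref{Xuwenest}. The transfer principle (Lemma 2.9 of \cite{taobook}) then upgrades this to $\|G\|_{L^2(dt)L^6(dx)L^2(dy)}\lesssim_{\delta}\|G\|_{X_W^{1/2+\delta}}$. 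Dualizing (dual of $L^2L^6L^2$ is $L^2L^{6/5}L^2$, dual of $X_W^{1/2+\delta}$ is $X_W^{-1/2-\delta}$) yields the clean end-point
\begin{align*}
\|F\|_{X_W^{-1/2-\delta}}\lesssim_{\delta}\|F\|_{L^{2}(dt)L^{6/5}(dx)L^2(dy)}.
\end{align*}

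Second, I would pair this with the trivial identity $\|F\|_{X_W^0}=\|F\|_{L^2(dt\,dx\,dy)}=\|F\|_{L^{2}(dt)L^2(dx)L^2(dy)}$ (by Fubini) and complex-interpolate the two estimates at parameter $\theta=(\delta+\delta')/(1/2+\delta)$, where $\delta'>0$ is the small number appearing on the left of the target estimate. For $\delta,\delta'$ small, $\theta$ is small, so the interpolated Lebesgue exponent in $x$ is $q$ with $1/q=(1-\theta)\cdot 5/6+\theta/2$, giving $q=6/5+$ arbitrarily close to $6/5$; in $y$ and in $t$ both endpoints are $L^2$, so these are preserved. This produces $\|F\|_{X_W^{-1/2+\delta'}}\lesssim\|F\|_{L^2(dt)L^{6/5+}(dx)L^2(dy)}$. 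If a strict $L^{2-}$ in time is required by subsequent applications (rather than $L^2$), I would perform one additional Hölder/interpolation with a non-endpoint Strichartz exponent $(p_0,q_0)$ with $p_0>2$ strictly, which costs $2-\epsilon$ in time but preserves the $6/5+$ structure in $x$.

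The only real obstacle is bookkeeping: making sure the three interpolation parameters (the Sobolev loss $\delta$, the target gap $\delta'$, and the deviations from $6/5$ and $2$) can be arranged simultaneously small. This is transparent because all three tend to $0$ together as $\delta\to 0$, which is exactly the quantitative statement in the lemma. No new harmonic analysis beyond what was already deployed in the proofs of \eqref{Xuwenest}, \eqref{XXuwenest}, and \eqref{main} is needed; the proof is essentially a transcription of the argument for \eqref{main} into the $X_W$ framework.
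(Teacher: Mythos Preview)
Your approach is correct and matches the paper's proof almost exactly: dualize the Strichartz-in-$X$ estimate \eqref{XXuwenest} and then interpolate against the trivial identity $\|F\|_{X_W^{0}}=\|F\|_{L^2}$. The only difference is the order of operations for obtaining the $L^{2-}$ in time. The paper first interpolates \eqref{XXuwenest} with the energy bound $L^\infty L^2$ to land at a non-endpoint pair $L^{2+}L^{6-}$, then dualizes to $L^{2-}L^{6/5+}$, and only then interpolates with the trivial $L^2$ identity; you instead run the argument at the endpoint (getting $L^2$ in time) and relegate the passage to $L^{2-}$ to a final remark. Your remark is exactly right --- replacing the endpoint $L^2L^6$ by a non-endpoint $L^{p_0}L^{q_0}$ with $p_0>2$ before dualizing is the same move the paper makes --- but note that this step is not optional for the lemma as stated, since neither of the global-in-time bounds $\lesssim\|F\|_{L^2L^{6/5+}L^2}$ and $\lesssim\|F\|_{L^{2-}L^{6/5+}L^2}$ implies the other, and the $L^{2-}$ is what later yields the $T^{\text{small power}}$ gain in Theorem~\ref{mainNL}.
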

\begin{proof} We start with
 \begin{align*}
 &\|F\|_{L^{2+}(dt) L^{6-}(dx) L^2 (d y)} \lesssim \|F\|_{X_W^{1/2+\delta}} \, \, \mbox{(interpolate \eqref{XXuwenest} with energy estimates)}\\
 &\|F\|_{X_W^{-1/2-\delta}} \lesssim  \|F\|_{L^{2-}(dt) L^{6/5+}(dx) L^2 (d y)} \, \, \mbox{(dual estimates)}
 \end{align*}
 and interpolate with the trivial  estimate
 \begin{align*}
 \|F\|_{X_W^{-1/2+1/2}} =  \|F\|_{L^2(dt) L^{2}(dx dy)}
 \end{align*}
 we get \eqref{dual}.
\end{proof}
Finally, we have one more estimate along the same lines
\begin{lemma} \label{vlemma}
If $\S \Lambda =0$ then
\begin{align*}
\|\Lambda\|_{L^2(dt) L^{\infty}(d(x-y))L^2(d(x+y))} \lesssim  \|<\nabla_x>^{\frac{1}{2}+ \epsilon}<\nabla_y>^{\frac{1}{2}+ \epsilon}\Lambda_0\|_{L^2(dx dy)}
\end{align*}
and, as a consequence,
if $\S \Lambda =F$ then
\begin{align*}
&\|\chi(t)\Lambda\|_{L^2(dt) L^{\infty}(d(x-y))L^2(d(x+y))}\\
& \lesssim  \|<\nabla_x>^{\frac{1}{2}+ \epsilon}<\nabla_y>^{\frac{1}{2}+ \epsilon}\Lambda_0\|_{L^2(dx dy)}
+\|<\nabla_x>^{\frac{1}{2}+ \epsilon}<\nabla_y>^{\frac{1}{2}+ \epsilon}F\|_{X_S^{-\frac{1}{2}+ \delta}}
\end{align*}
\end{lemma}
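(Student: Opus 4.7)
The plan is to reduce the homogeneous estimate to the Chen-Holmer-type bound \eqref{Xuwenestinf} already proved in this section, via a linear change of variables, and then to promote the result to the inhomogeneous statement using the same $X^{1/2+\delta}$ transfer principle that was used to pass from \eqref{higherlambda}, \eqref{gammab} to \eqref{higherlambdaX}, \eqref{gammabX}.

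For the homogeneous part, I would pass to the coordinates $u := x-y$, $v := x+y$. The identity $\Delta_x + \Delta_y = 2(\Delta_u + \Delta_v)$ shows that $\S\Lambda = 0$ becomes, up to a harmless rescaling of $t$, the free Schr\"odinger equation in $(u,v) \in \mathbb R^3 \times \mathbb R^3$ driven by $\Delta_u + \Delta_v$. Estimate \eqref{Xuwenestinf}, with the $+$ sign and with the roles of $(x,y)$ played by $(u,v)$, then yields
\begin{align*}
\|\Lambda\|_{L^2(dt) L^{\infty}(du) L^2(dv)} \lesssim \|<\nabla_u>^{1/2+\epsilon} \Lambda_0\|_{L^2(du\,dv)}.
\end{align*}
By Plancherel the right hand side equals a constant multiple of $\|<\xi-\eta>^{1/2+\epsilon} \widehat\Lambda_0\|_{L^2}$ in the Fourier variables $(\xi,\eta)$ dual to $(x,y)$, and the elementary symbol bound $<\xi-\eta>^{1/2+\epsilon} \lesssim <\xi>^{1/2+\epsilon} <\eta>^{1/2+\epsilon}$ (a consequence of the subadditivity of $s \mapsto s^{1/2+\epsilon}$ on $[0,\infty)$, together with the trivial bound $<\xi-\eta> \le C<\xi><\eta>$) delivers the claimed estimate for $\S\Lambda = 0$.

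For the inhomogeneous version, I would invoke the standard transfer principle (Lemma 2.9 of \cite{taobook}): any $L^p$-type bound valid for the free Schr\"odinger flow extends to an estimate in $X_S^{1/2+\delta}$. Since the Fourier multipliers $<\nabla_x>^{1/2+\epsilon}$ and $<\nabla_y>^{1/2+\epsilon}$ commute with both the propagator and with the cutoff $\chi(t)$, the homogeneous estimate upgrades to
\begin{align*}
\|\chi(t)\Lambda\|_{L^2(dt) L^{\infty}(d(x-y)) L^2(d(x+y))} \lesssim \|<\nabla_x>^{1/2+\epsilon}<\nabla_y>^{1/2+\epsilon} \chi(t) \Lambda\|_{X_S^{1/2+\delta}},
\end{align*}
and the $X_S$-version of \eqref{cutoff} then bounds the right hand side by $\|<\nabla_x>^{1/2+\epsilon}<\nabla_y>^{1/2+\epsilon}\Lambda_0\|_{L^2} + \|<\nabla_x>^{1/2+\epsilon}<\nabla_y>^{1/2+\epsilon}F\|_{X_S^{-1/2+\delta}}$, as claimed. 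No step presents a serious obstacle; the argument is entirely parallel in structure to the derivation of \eqref{higherlambdaX} from \eqref{higherlambda}, with \eqref{Xuwenestinf} playing the role of the collapsing estimate.
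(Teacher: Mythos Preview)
Your proof is correct and follows essentially the same route as the paper: change to $(x-y,x+y)$ coordinates, apply \eqref{Xuwenestinf}, and bound $\langle\nabla_{x-y}\rangle^{1/2+\epsilon}$ by $\langle\nabla_x\rangle^{1/2+\epsilon}\langle\nabla_y\rangle^{1/2+\epsilon}$ via the symbol inequality; the inhomogeneous upgrade via the transfer principle and \eqref{cutoff} is likewise exactly what the paper does. Your write-up is in fact slightly more detailed than the paper's, which compresses the argument into two displayed lines.
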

\begin{proof} Writing \eqref{Xuwenestinf} in $x+y, x-y$ coordinates
\begin{align*}
&\|e^{i t (\Delta_x + \Delta_y)} \Lambda_0\|_{L^2(dt) L^{\infty}(d(x-y))L^2(d(x+y))}\\
&\lesssim \|<\nabla_{x-y}>^{\frac{1}{2} + \epsilon}\Lambda_0\|_{L^2(dx dy)}
\lesssim \|<\nabla_x>^{\frac{1}{2} + \epsilon}<\nabla_y>^{\frac{1}{2} + \epsilon}\Lambda_0\|_{L^2(dx dy)}
\end{align*}
\end{proof}
Note that here we are forced to use $<\nabla>$ rather than $\nabla$.

With the help of Lemma \eqref{vlemma}  we can estimate solutions to
\begin{align}
 &\left(\S + \frac{1}{N} v_N(x-y)\right) \Lambda=F \label{veq}\\
 &\Lambda(0, \cdot)=\Lambda_0(\cdot) \notag
\end{align}
The next proposition is the key estimate of our paper.
\begin{proposition} \label{mainXest}
If $\Lambda$ satisfies \eqref{veq}, $\beta <2/3$, and $\chi(t)$ is a smooth cut-off function which is $1$ on $[0, 1]$,  $\delta$ is sufficiently small, and $N\ge N_0$ is sufficiently large, then
\begin{align}
 &\|<\nabla_x>^{\frac{1}{2}+\epsilon} <\nabla_y>^{\frac{1}{2}+\epsilon} \chi(t)\Lambda\|_{X_S^{\frac{1}{2}+\delta}} \label{Xest}\\
 & \lesssim
  \|<\nabla_x>^{\frac{1}{2}+\epsilon} <\nabla_y>^{\frac{1}{2}+\epsilon} \Lambda_0(x, y)\|_{L^2(dx dy)}
+ \|<\nabla_x>^{\frac{1}{2}+\epsilon} <\nabla_y>^{\frac{1}{2}+\epsilon} F\|_{X_S^{-\frac{1}{2}+\delta}}\notag
\end{align}
and thus, by Lemma \eqref{spacetime},
\begin{align}
&\sup_z\|<\nabla_x>^{\frac{1}{2}+\epsilon} \chi(t)\Lambda(t, x+z, x) \|_{L^2(dt dx)}\label{higherlambdaV}\\
 &\lesssim \|<\nabla_x>^{\frac{1}{2}+\epsilon} <\nabla_y>^{\frac{1}{2}+\epsilon} \Lambda_0(x, y)\|_{L^2(dx dy)}
+ \|<\nabla_x>^{\frac{1}{2}+\epsilon} <\nabla_y>^{\frac{1}{2}+\epsilon} F\|_{X_S^{-\frac{1}{2}+\delta}}
\notag
\end{align}
Similar estimates hold for $\left(\frac{\partial}{\partial t}\right)^i\nabla_{x+y} ^j\left(\chi(t)\Lambda\right)$ because these derivatives commute with the potential.

\end{proposition}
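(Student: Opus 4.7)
The plan is to treat the singular potential term $\tfrac{1}{N}v_N(x-y)\Lambda$ as a perturbation of the free Schr\"odinger flow and close \eqref{Xest} by absorbing the perturbation into the left hand side for $N$ large. Abbreviate $D_\epsilon:=<\nabla_x>^{1/2+\epsilon}<\nabla_y>^{1/2+\epsilon}$, which is a Fourier multiplier and thus commutes with $\S$. Rewriting \eqref{veq} as $\S\Lambda=F-\tfrac{1}{N}v_N(x-y)\Lambda$, multiplying by $\chi(t)$, and applying the $6+1$ dimensional analogue of \eqref{cutoff} to $D_\epsilon\Lambda$ (with the usual auxiliary-cutoff trick, replacing $\chi$ by a slightly larger $\tilde\chi$ whenever needed so that the RHS quantity $v_N\Lambda$ is replaced by $v_N\tilde\chi\Lambda$) would yield
\begin{align*}
\|D_\epsilon\chi\Lambda\|_{X_S^{1/2+\delta}}
\lesssim\ &\|D_\epsilon\Lambda_0\|_{L^2}+\|D_\epsilon F\|_{X_S^{-1/2+\delta}}\\
&+\tfrac{1}{N}\|D_\epsilon\bigl(v_N(x-y)\,\chi\Lambda\bigr)\|_{X_S^{-1/2+\delta}}.
\end{align*}
The objective is then to bound the third term by $CN^{-\eta}\|D_\epsilon\chi\Lambda\|_{X_S^{1/2+\delta}}$ for some $\eta>0$, which permits absorption once $N\geq N_0$ is large.

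To prove the absorption bound, first use \eqref{main} to replace the $X_S^{-1/2+\delta}$ norm on the product by the mixed Lebesgue norm $L^2(dt)L^{6/5+}(d(x-y))L^2(d(x+y))$. Next distribute $D_\epsilon$ across the product $v_N(x-y)\cdot\chi\Lambda$ by a fractional Leibniz (Kato--Ponce / paraproduct) decomposition, producing finitely many schematic bilinear pieces of the form $(D^\alpha v_N)\cdot(D^\beta\chi\Lambda)$ with $\alpha+\beta$ reconstructing $D_\epsilon$. For each piece, apply H\"older in the $d(x-y)$ variable: place the $v_N$-factor (a function of $x-y$ alone) in $L^{6/5}(d(x-y))$ and the $\Lambda$-factor in $L^2(dt)L^\infty(d(x-y))L^2(d(x+y))$. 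The $v_N$-factor is controlled uniformly (with the $N^{-\eta}$ gain) by the standing hypothesis $\tfrac{1}{N}\|D_\epsilon v_N\|_{L^{6/5}(d(x-y))}\le CN^{-\eta}$, which is exactly where $\beta<2/3$ enters; the $\Lambda$-factor is controlled by Lemma \eqref{vlemma}, producing the desired bound by $\|D_\epsilon\chi\Lambda\|_{X_S^{1/2+\delta}}$ (modulo initial data and forcing terms already on the right). This closes the estimate and establishes \eqref{Xest}; the space-time collapsing bound \eqref{higherlambdaV} then follows immediately from \eqref{higherlambdaX} applied to $\chi\Lambda$, since after transposing the potential to the right hand side its contribution is controlled in $X_S^{-1/2+\delta}$ by the just-proved \eqref{Xest}.

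The last sentence of the proposition is handled separately but cheaply: both $\partial_t$ and $\nabla_{x+y}$ commute with multiplication by $v_N(x-y)$ (which depends on neither $t$ nor $x+y$), so differentiating \eqref{veq} produces an equation of the \emph{same} form for the differentiated quantity, with admissible new forcing, and the argument above applies verbatim.

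The main technical obstacle is implementing the fractional Leibniz step, since the weight $D_\epsilon$ couples the $x$ and $y$ derivatives while $v_N$ depends only on $x-y$. In the coordinates $u=x+y$, $v=x-y$ the potential depends on $v$ alone, but $D_\epsilon$ becomes $<\nabla_u+\nabla_v>^{1/2+\epsilon}<\nabla_u-\nabla_v>^{1/2+\epsilon}$, which is not a tensor product of a $u$-multiplier and a $v$-multiplier. The cleanest route seems to be direct Fourier: $\widehat{v_N(\cdot-\cdot)}$ is supported on the anti-diagonal $\xi+\eta=0$, and a Littlewood--Paley splitting in the $(\xi,\eta)$ variables combined with the pointwise inequality $<\xi>^{1/2+\epsilon}<\eta>^{1/2+\epsilon}\lesssim <\xi-\zeta>^{1/2+\epsilon}<\eta+\zeta>^{1/2+\epsilon}+<\zeta>^{1+2\epsilon}$ (in the two relevant high-low regimes) reduces the whole bilinear estimate to the finite list of H\"older splits described above. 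Once this bookkeeping is complete, the rest of the argument is standard $X^{s,b}$ machinery.
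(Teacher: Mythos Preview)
Your approach is essentially the same as the paper's: treat $\tfrac{1}{N}v_N\Lambda$ as a perturbation, pass to the $L^2L^{6/5+}(d(x-y))L^2(d(x+y))$ norm via \eqref{main}, distribute the derivatives by a Leibniz-type splitting, place the potential factor in $L^{6/5+}$ (which gives the $N^{-\eta}$ gain for $\beta<2/3$), place the $\Lambda$ factor in $L^2L^\infty(d(x-y))L^2$ via Lemma~\ref{vlemma}, and absorb.

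The one place where the paper is slicker is the Leibniz step that you flag as ``the main technical obstacle.'' Rather than invoke Kato--Ponce or a paraproduct decomposition (whose standard form does not directly apply because $D_\epsilon$ mixes $x$ and $y$ while the target norms are in $x\pm y$), the paper uses a positivity trick: replace $v$ by the fixed Schwartz function $w$ with $\hat w\ge|\hat v|$, and replace $\chi_1\Lambda$ by $\lfloor\chi_1\Lambda\rfloor$, defined as the inverse Fourier transform of $|\widehat{\chi_1\Lambda}|$. Then the elementary pointwise bound
\[
\langle\xi\rangle^{\frac12+\epsilon}\langle\eta\rangle^{\frac12+\epsilon}
\lesssim
\bigl(\langle\xi-\zeta\rangle^{\frac12+\epsilon}+\langle\zeta\rangle^{\frac12+\epsilon}\bigr)
\bigl(\langle\eta+\zeta\rangle^{\frac12+\epsilon}+\langle\zeta\rangle^{\frac12+\epsilon}\bigr)
\]
becomes a genuine inequality of non-negative functions on the Fourier side, yielding four honest terms (your displayed two-term inequality should be this four-term product, matching the paper's \eqref{term1}--\eqref{term4}). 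Since the $X_S^{\pm 1/2+\delta}$ norms depend only on $|\widehat{\,\cdot\,}|$, one loses nothing in passing to $w$ and $\lfloor\cdot\rfloor$, and the H\"older/Lemma~\ref{vlemma} argument then runs exactly as you describe. This device buys you a one-line ``cheap Leibniz'' in place of the bookkeeping you outline in your last paragraph.
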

\begin{proof}
Recall we introduces a potential $w \in \mathcal S$ such that $\hat w \ge |\hat v|$.
For \eqref{Xest}, we use another cut-off function $\chi_1(t)$ which is identically $1$ on the support of $\chi$ and notice that the solution of \eqref{veq} agrees with that of
\begin{align}
 &\S \Lambda + \frac{1}{N} v_N\chi_1(t) \Lambda=F \label{cuteq}\\
 &\Lambda(0, \cdot)=\Lambda_0(\cdot) \notag
\end{align}
on the support of $\chi$. So we work with the solution of \eqref{cuteq}.
Also, for technical reasons, we define $\lfloor\chi_1(t)\Lambda\rfloor$ to be the inverse Fourier transform of the absolute value of the (space-time) Fourier transform of $\chi_1(t)\Lambda$.
Then
\begin{align}
&\|<\nabla_x>^{\frac{1}{2}+\epsilon} <\nabla_y>^{\frac{1}{2}+\epsilon}\chi_1(t)\Lambda\|_{X_S^{\frac{1}{2}+\delta}} \label{LHS} \\
 &\lesssim \|<\nabla_x>^{\frac{1}{2}+\epsilon} <\nabla_y>^{\frac{1}{2}+\epsilon}\left( \frac{1}{N} v_N \chi_1(t)\Lambda \right)\|_{X_S^{-\frac{1}{2}+\delta}} \label{xterm}\\
  &+\|<\nabla_x>^{\frac{1}{2}+\epsilon} <\nabla_y>^{\frac{1}{2}+\epsilon} \Lambda_0(x, y)\|_{L^2(dx dy)} \notag
+ \|<\nabla_x>^{\frac{1}{2}+\epsilon} <\nabla_y>^{\frac{1}{2}+\delta} F\|_{X_S^{-\frac{1}{2}+\delta}}
\end{align}
We plan to absorb \eqref{xterm} in the LHS of \eqref{LHS}.
The reason we introduced $w$ and $ \lfloor\chi_1(t)\Lambda\rfloor$, which have non-negative Fourier transforms, is to have the following cheap  substitute for the
 Leibnitz rule:
 \begin{subequations}
\begin{align}
&\|<\nabla_x>^{\frac{1}{2}+\epsilon} <\nabla_y>^{\frac{1}{2}+\epsilon}\left( \frac{1}{N}\chi_1(t) v_N \Lambda \right)\|_{X_S^{-\frac{1}{2}+\delta}}\notag\\
&
\lesssim \|\left(<\nabla_x>^{\frac{1}{2}+\epsilon} <\nabla_y>^{\frac{1}{2}+\delta} \frac{1}{N} w_N \right)\lfloor\chi_1(t)\Lambda\rfloor \|_{X_S^{-\frac{1}{2}+\delta}} \label{term1}\\
&+\|\left(<\nabla_x>^{\frac{1}{2}+\epsilon}  \frac{1}{N} w_N\right)  <\nabla_y>^{\frac{1}{2}+\epsilon}\lfloor\chi_1(t)\Lambda\rfloor\|_{X_S^{-\frac{1}{2}+\delta}}\label{term2}\\
&+\|\left(<\nabla_y>^{\frac{1}{2}+\epsilon}  \frac{1}{N} w_N\right)  <\nabla_x>^{\frac{1}{2}+\epsilon}\lfloor\chi_1(t)\Lambda\rfloor\|_{X_S^{-\frac{1}{2}+\delta}}\label{term3}\\
&+\|  \frac{1}{N} w_N<\nabla_x>^{\frac{1}{2}+\epsilon}  <\nabla_y>^{\frac{1}{2}+\epsilon}\lfloor\chi_1(t)\Lambda\rfloor\|_{X_S^{-\frac{1}{2} + \delta}}\label{term4}
\end{align}
\end{subequations}
For the most singular term,  \eqref{term1}, $<\nabla_x>^{\frac{1}{2}+\epsilon}<\nabla_y>^{\frac{1}{2}+\epsilon} \frac{1}{N} w_N(x-y)$ is a function of $x-y$ and we recall
 $\epsilon$ was chosen so that\\
  $\|<\nabla_x>^{\frac{1}{2}+\epsilon}<\nabla_y>^{\frac{1}{2}+\epsilon} \frac{1}{N} w_N\|_{L^{6/5}} \le \frac{C}{N^{small \, \, power}}$. At this stage we insist $\delta$ is so small that the corresponding number $6/5+$ also satisfies\\
  $\|<\nabla_x>^{\frac{1}{2}+\epsilon}<\nabla_y>^{\frac{1}{2}+\epsilon} \frac{1}{N} w_N\|_{L^{6/5+}} \le \frac{C}{N^{small \, \, power}}$.
We estimate, using \eqref{main} and Lemma \eqref{vlemma}
\begin{align*}
&\|\left(<\nabla_x>^{\frac{1}{2}+\epsilon}<\nabla_y>^{\frac{1}{2}+\epsilon}  \frac{1}{N} w_N\right) \lfloor \chi_1(t)\Lambda\rfloor\|_{X^{-\frac{1}{2}+\delta}}\\
&\lesssim\|\left(\nabla_x^{\frac{1}{2}+\epsilon}\nabla_y^{\frac{1}{2}+\epsilon}  \frac{1}{N} w_N\right)\lfloor \chi_1(t)\Lambda\rfloor\|_{L^2(dt) L^{6/5+}(d(x-y)) L^2(d(x+y))}\\
&\lesssim \| \nabla_x^{\frac{1}{2}+\epsilon}\nabla_y^{\frac{1}{2}+\epsilon}  \frac{1}{N} w_N\|_{L^{6/5+}(d(x-y))} \| \lfloor\chi_1(t)\Lambda\rfloor\|_{L^2(dt) L^{\infty}(d(x-y)) L^2(d(x+y))}\\
&\lesssim \frac{1}{N^{small \, \, power}}\,  \| <\nabla_x>^{\frac{1}{2}+\epsilon}<\nabla_y>^{\frac{1}{2}+\epsilon}\lfloor\chi_1(t)\Lambda\rfloor\|_{X^{\frac{1}{2}+\delta}}\\
&= \frac{1}{N^{small \, \, power}}\,  \| <\nabla_x>^{\frac{1}{2}+\epsilon}<\nabla_y>^{\frac{1}{2}+\epsilon}\chi_1(t)\Lambda\|_{X^{\frac{1}{2}+\delta}}\\
\end{align*}
This can be absorbed in the LHS of \eqref{LHS}.
The other terms are easier.

\end{proof}

While we will use the $X$ spaces as tools in our proofs,  the actual norms in which we prove well-posedness  are
$L^p$ norms defined for some $0 < T \le 1$.
\begin{align}
\N_T(\Lambda) =& \sup_z\|<\nabla_x>^{\frac{1}{2}+\epsilon} \Lambda(t, x+z, x) \|_{L^2([0, T] \times \mathbb R^3)}\label{nlamb}\\
&+\|<\nabla_x>^{\frac{1}{2}+\epsilon}<\nabla_y>^{\frac{1}{2}+\epsilon} \Lambda(t, x, y) \|_{L^{\infty}([0, T] \times L^2(\mathbb R^6))}\notag\\
\dot \N_T(\Gamma) =& \label{ngam}\sup_z\||\nabla_x|^{\frac{1}{2}+\epsilon} \Gamma(t, x+z, x) \|_{L^2([0, T] \times \mathbb R^3)}\\
&+\sup_z\|||\nabla_x|^{1/2} \Gamma(t, x+z, x) \|_{L^2([0, T] \times \mathbb R^3)}\notag\\
&+\|<\nabla_x>^{\frac{1}{2}+\epsilon}<\nabla_y>^{\frac{1}{2}+\epsilon} \Gamma(t, x, y) \|_{L^{\infty}([0, T] \times L^2(\mathbb R^6))}\notag\\
\N_T(\phi) =\label{nphi}&\|<\nabla_x>^{\frac{1}{2}+\epsilon} \phi\|_{L^{\infty}([0, T])L^2} + \|<\nabla_x>^{\frac{1}{2}+\epsilon} \phi\|_{L^{2}([0, T])L^6}
\end{align}

Based on energy and Strichartz estimates,
the collapsing estimates of Lemma \eqref{spacetime} and general properties of $X$ spaces which allow one to transfer estimates for solutions to the homogeneous equation to estimates in $X^{1/2+\delta}$ spaces  we have
 \begin{align*}
 &\N_T(\Lambda)\lesssim \|<\nabla_x>^{\frac{1}{2}+\epsilon}<\nabla_y>^{\frac{1}{2}+\epsilon} \Lambda\|_{X_S^{\frac{1}{2} + \delta}}\\
 &\dot \N_T(\Gamma)\lesssim \|<\nabla_x>^{\frac{1}{2}+\epsilon}<\nabla_y>^{\frac{1}{2}+\epsilon} \overline \Gamma\|_{X_W^{\frac{1}{2} + \delta}}\\
 &\N_T(\phi \otimes  \phi) + \dot \N_T(\phi \otimes \overline \phi)
 \lesssim \|<\nabla_x>^{\frac{1}{2}+\epsilon}\phi\|^2_{X^{\frac{1}{2} + \delta}}\\
 \end{align*}

Our basic estimates for the inhomogeneous linear equations are
\begin{proposition} \label{maincorW}
If $\Gamma$ and $\phi$ satisfy
\begin{align*}
 &\S_{\pm}  \Gamma=F \\
 &\S \phi =G
\end{align*}
  $0\le T \le 1$, and $2-<2$, $6/5+>6/5$ are fixed numbers which can be chosen close to $2$ and $6/5$,  then

\begin{align*}
&\dot \N_T(\Gamma)\\
 &\lesssim \|<\nabla_x>^{\frac{1}{2}+\epsilon} <\nabla_y>^{\frac{1}{2}+\epsilon} \Gamma(0, x, y)\|_{L^2(dx dy)}\\
&+ \|<\nabla_x>^{\frac{1}{2}+\epsilon} <\nabla_y>^{\frac{1}{2}+\epsilon} F\|_{L^{2-}[0, T]L^{6/5+}(dx)L^2(dy)}\\
&\N_T(\phi)\\
 &\lesssim \|<\nabla_x>^{\frac{1}{2}+\epsilon} \phi(0, \cdot)\|_{L^2(dx )}
+ \|<\nabla_x>^{\frac{1}{2}+\epsilon}G\|_{L^{2-}[0, T]L^{6/5+}(dx)}\\
&\N_T(\phi \otimes  \phi) + \dot \N_T(\phi \otimes \overline \phi)
 \lesssim \N_T(\phi )^2
\end{align*}
Similar estimates hold when $x$ and $y$ are reversed.
\end{proposition}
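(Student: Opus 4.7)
The plan is to reduce each desired $L^p$-based bound to an $X^{s,b}$-space estimate, use the inhomogeneous cut-off bound \eqref{cutoff} (and its $X_W$ analogue) to pass from the $X^{1/2+\delta}$-norm of the solution to the $X^{-1/2+\delta}$-norm of the forcing, and then apply the dual Strichartz estimate \eqref{dual} to convert that dual-$X$ norm to the $L^{2-}_tL^{6/5+}_xL^2_y$ norm stated in the proposition. I would fix $\chi(t)$ smooth, equal to $1$ on $[0,T]\subset[0,1]$ and compactly supported slightly beyond. The general principle that homogeneous $L^p_tL^q_x$ bounds lift to $X^{1/2+\delta}$-bounds, combined with Lemma \eqref{spacetime} (specifically \eqref{higherlambdaX}, \eqref{gammabX}), the Strichartz estimate \eqref{Xuwenest}, and a standard $L^\infty_tL^2$ energy bound, gives the reductions
\[
\dot\N_T(\Gamma)\lesssim \|<\nabla_x>^{\frac{1}{2}+\epsilon}<\nabla_y>^{\frac{1}{2}+\epsilon}\chi(t)\overline\Gamma\|_{X_W^{1/2+\delta}},\qquad \N_T(\phi)\lesssim \|<\nabla_x>^{\frac{1}{2}+\epsilon}\chi(t)\phi\|_{X^{1/2+\delta}}.
\]
The low-frequency summand $\sup_z\||\nabla_x|^{1/2}\Gamma(t,x+z,x)\|_{L^2}$ of $\dot\N_T(\Gamma)$ has to be handled separately via the second half of \eqref{gammab}, since only a homogeneous derivative is admissible at low frequencies for $\Gamma$.

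Next I would apply \eqref{cutoff} to $\chi\phi$ and its $X_W$-analogue to $\chi\overline\Gamma$; this dominates the $X^{1/2+\delta}$-norm by the initial-data contribution plus the $X^{-1/2+\delta}$-norm of the forcing, namely $\|<\nabla_x>^{\frac{1}{2}+\epsilon}<\nabla_y>^{\frac{1}{2}+\epsilon}\overline F\|_{X_W^{-1/2+\delta}}$ and $\|<\nabla_x>^{\frac{1}{2}+\epsilon}G\|_{X^{-1/2+\delta}}$. By \eqref{dual} and its three-dimensional analogue (which follows by interpolating the dual endpoint Strichartz $L^2L^{6/5}\to X^{-1/2-\delta}$ from \cite{K-T} with the trivial embedding $L^2\to X^0$), these dual-$X$ norms are controlled respectively by $\|<\nabla_x>^{\frac{1}{2}+\epsilon}<\nabla_y>^{\frac{1}{2}+\epsilon}F\|_{L^{2-}_tL^{6/5+}_xL^2_y}$ and $\|<\nabla_x>^{\frac{1}{2}+\epsilon}G\|_{L^{2-}_tL^{6/5+}_x}$. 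Composing the two chains of inequalities yields the claimed linear estimates for $\dot\N_T(\Gamma)$ and $\N_T(\phi)$.

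For the tensor-product bound, the $L^\infty_tL^2_{xy}$ components of $\N_T(\phi\otimes\phi)$ and $\dot\N_T(\phi\otimes\overline\phi)$ factor immediately into $\|<\nabla>^{\frac{1}{2}+\epsilon}\phi\|_{L^\infty_tL^2_x}^2\le \N_T(\phi)^2$. For the collapsing-type components, I would apply the Kato-Ponce fractional Leibniz rule to $<\nabla_x>^{\frac{1}{2}+\epsilon}\bigl(\phi(t,x+z)\phi(t,x)\bigr)$, use translation invariance in $x$, and estimate by H\"older in $x$ with exponents $(6,3)$: the $L^2_tL^6_x$-factor is part of $\N_T(\phi)$, and the $L^\infty_tL^3_x$-factor follows from the Sobolev embedding $H^{1/2}(\R^3)\hookrightarrow L^3(\R^3)$ applied to the $L^\infty_tL^2$ component of $\N_T(\phi)$. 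The $|\nabla_x|^{1/2}$ summand inside $\dot\N_T(\phi\otimes\overline\phi)$ is a fortiori bounded by the $<\nabla_x>^{1/2+\epsilon}$ summand via the pointwise Fourier comparison $|\xi|^{1/2}\le \langle\xi\rangle^{1/2+\epsilon}$. The one technical subtlety throughout is maintaining uniformity in the translation parameter $z$; this comes for free, because every step uses either translation-invariant operations in $x$ or the collapsing estimates from Lemma \eqref{spacetime}, in which $\sup_z$ is already built in.
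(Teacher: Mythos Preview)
Your overall strategy matches the paper's: control $\dot\N_T(\Gamma)$ and $\N_T(\phi)$ by the corresponding $X^{1/2+\delta}$ norms via Lemma \ref{spacetime} and the general transfer principle, then invoke \eqref{cutoff} and \eqref{dual}. Your treatment of the tensor-product bound via Kato--Ponce and H\"older with exponents $(6,3)$ is also fine.

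There is, however, a real gap in how you obtain the restriction to $[0,T]$ on the forcing. Your application of \eqref{cutoff} produces the \emph{global} $X_W^{-1/2+\delta}$ norm of $F$ (the right side of \eqref{cutoff} is not localized to the support of $\chi$), and \eqref{dual} then turns this into $\|F\|_{L^{2-}(\mathbb R)L^{6/5+}L^2}$, not the $L^{2-}[0,T]$ norm claimed in the proposition. In addition, you allow $\chi$ to depend on $T$, while the constant in \eqref{cutoff} is stated as $\chi$-dependent, so uniformity in $T\in[0,1]$ is not automatic either.

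The paper fixes both issues with a single device: keep $\chi$ \emph{fixed} (equal to $1$ on all of $[0,1]$) and instead truncate the forcing. Define $F_T:=F\cdot\mathbf 1_{[0,T]}$ and let $\Gamma_T$ solve $\S_{\pm}\Gamma_T=F_T$ with the same initial data; then $\Gamma=\Gamma_T$ on $[0,T]$, hence $\dot\N_T(\Gamma)=\dot\N_T(\chi\,\Gamma_T)$, and now \eqref{cutoff} followed by \eqref{dual} yields exactly
$\|\langle\nabla_x\rangle^{1/2+\epsilon}\langle\nabla_y\rangle^{1/2+\epsilon}F_T\|_{L^{2-}L^{6/5+}L^2}
=\|\langle\nabla_x\rangle^{1/2+\epsilon}\langle\nabla_y\rangle^{1/2+\epsilon}F\|_{L^{2-}[0,T]L^{6/5+}L^2}$,
with constants independent of $T$. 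This is not cosmetic: the $[0,T]$ restriction is precisely what later produces the factor $T^{\text{small power}}$ needed to close the nonlinear iteration in Theorem \ref{mainNL}.
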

\begin{proof}
We will prove this for the equation for $\Gamma$, the other one for $\phi$ being easier.
The standard energy estimate and the collapsing estimate
of lemma \eqref{spacetime} prove that if $\S_{\pm} \Gamma =0$, then
\begin{align*}
\dot \N_T(\Gamma)  \lesssim \|<\nabla_x>^{\frac{1}{2}+\epsilon} <\nabla_y>^{\frac{1}{2}+\epsilon} \Gamma_0(x, y)\|_{L^2(dx dy)}
\end{align*}
From here and  general properties of $X^{1/2+\delta}$ spaces,  we get, for any $\delta>0$,
\begin{align*}
\dot \N_T(\Gamma)
 &\lesssim_{\delta}
 \|<\nabla_x>^{\frac{1}{2}+\epsilon} <\nabla_y>^{\frac{1}{2}+\epsilon} \Gamma\|_{X_W^{1/2+\delta}}
\end{align*}
uniformly in $T$.

 Finally, to prove the stated estimate,
let $F_T=F$ in $[0, T]$, and $0 $ otherwise. Let $\Gamma_T$ be the solution to
\begin{align*}
 &\S_{\pm}  \Gamma_T=F_T\\
 &\Gamma_T(0, \cdot)=\Gamma(0, \cdot)
 \end{align*}
 Then $\Gamma=\Gamma_T$ in $[0, T]$ and recalling $0<T \le 1$ and $\chi=1$ on $[0, 1]$,
\begin{align*}
&\dot \N_T(\Gamma) = \dot \N_T(\chi(t)\Gamma_T)\\
 &\lesssim  \|<\nabla_x>^{\frac{1}{2}+\epsilon}<\nabla_y>^{\frac{1}{2}+\epsilon}\chi(t) \Gamma_T \|_{X_S^{\frac{1}{2} + \delta}}
 \\
 & \lesssim
  \|<\nabla_x>^{\frac{1}{2}+\epsilon} <\nabla_y>^{\frac{1}{2}+\epsilon} \Gamma_0(x, y)\|_{L^2(dx dy)}\\
&+ \|<\nabla_x>^{\frac{1}{2}+\epsilon} <\nabla_y>^{\frac{1}{2}+\epsilon} F_T\|_{X_S^{-\frac{1}{2}+\delta}}
\, \, \mbox{ (by  \eqref{cutoff})}\\
&\lesssim \|<\nabla_x>^{\frac{1}{2}+\epsilon} <\nabla_y>^{\frac{1}{2}+\epsilon} \Gamma_0(x, y)\|_{L^2(dx dy)} \notag \\
&+\|<\nabla_x>^{\frac{1}{2}+\epsilon} <\nabla_y>^{\frac{1}{2}+\epsilon} F_T\|_{L^{2-}(dt)L^{6/5+}(dx)L^2(dy)}  \mbox{ (by \eqref{dual}) }\\
&=
\|<\nabla_x>^{\frac{1}{2}+\epsilon} <\nabla_y>^{\frac{1}{2}+\epsilon} \Gamma_0(x, y)\|_{L^2(dx dy)}\\
&+
\|<\nabla_x>^{\frac{1}{2}+\epsilon} <\nabla_y>^{\frac{1}{2}+\epsilon} F\|_{L^{2-}[0, T]L^{6/5+}(dx)L^2(dy)}
\end{align*}

\end{proof}
The same type of result holds for the equation for $\Lambda$.

\begin{proposition} \label{maincor}

There exist numbers $2-<2$, $6/5+>6/5$, which can be chosen arbitrarily close to
$2$ and $6/5$ such that,  if $\Lambda$ satisfies
\begin{align}
 &\left(\S + \frac{1}{N} v_N(x-y)\right) \Lambda=F \label{veq1}\\
 &\Lambda(0, \cdot)=\Lambda_0(\cdot) \notag
\end{align}
$\beta <2/3$, and $0\le T \le 1$, then

\begin{align}
&\N_T(\Lambda)\label{higherlambdalp}\\
 &\lesssim \|<\nabla_x>^{\frac{1}{2}+\epsilon} <\nabla_y>^{\frac{1}{2}+\epsilon} \Lambda_0(x, y)\|_{L^2(dx dy)}\notag\\
&+ \|<\nabla_x>^{\frac{1}{2}+\epsilon} <\nabla_y>^{\frac{1}{2}+\epsilon} F\|_{L^{2-}[0, T]L^{6/5+}(dx)L^2(dy)}
\notag
\end{align}
Similar estimates hold for $\left(\frac{\partial}{\partial t}\right)^i\nabla_{x+y} ^j\Lambda$:
\begin{align}
&\N_T\left(\left(\frac{\partial}{\partial t}\right)^i\Lambda\right)\label{higherlambdalpd}\\
 &\lesssim \|<\nabla_x>^{\frac{1}{2}+\epsilon} <\nabla_y>^{\frac{1}{2}+\epsilon}
 \left(\frac{\partial}{\partial t}\right)^i\nabla_{x+y} ^j
  \Lambda(t, x, y)\bigg|_{t=0}\|_{L^2(dx dy)}\notag\\
&+ \|<\nabla_x>^{\frac{1}{2}+\epsilon} <\nabla_y>^{\frac{1}{2}+\epsilon}\left(\frac{\partial}{\partial t}\right)^i\nabla_{x+y} ^jF\|_{L^{2-}[0, T]L^{6/5+}(dx)L^2(dy)}
\notag
\end{align}
\end{proposition}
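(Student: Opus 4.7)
The plan is to run the template of Proposition \ref{maincorW} essentially verbatim, with Proposition \ref{mainXest} playing the role that the free-Schr\"odinger energy bound \eqref{cutoff} played there, so that the potential $\frac{1}{N} v_N(x-y)$ is absorbed rather than treated as forcing.

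First I would reduce to the whole line in time by cutting the forcing. Define $F_T := F \cdot \mathbf{1}_{[0,T]}$ and let $\Lambda_T$ solve \eqref{veq1} with $F$ replaced by $F_T$ and the same initial datum $\Lambda_0$. Since $0 < T \le 1$ and the fixed cut-off $\chi$ is identically $1$ on $[0,1]$, we have $\Lambda = \Lambda_T = \chi(t)\Lambda_T$ on $[0,T]$, and therefore $\N_T(\Lambda) = \N_T(\chi(t)\Lambda_T)$.

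Second, I would bound $\N_T(\chi(t)\Lambda_T)$ by the norm $\|<\nabla_x>^{\frac{1}{2}+\epsilon}<\nabla_y>^{\frac{1}{2}+\epsilon}\chi(t)\Lambda_T\|_{X_S^{1/2+\delta}}$. The collapsing component $\sup_z\|<\nabla_x>^{\frac{1}{2}+\epsilon}\Lambda_T(t,x+z,x)\|_{L^2([0,T]\times\mathbb{R}^3)}$ is exactly the left-hand side of the inhomogeneous bound \eqref{higherlambdaV}, while the energy component $\|<\nabla_x>^{\frac{1}{2}+\epsilon}<\nabla_y>^{\frac{1}{2}+\epsilon}\Lambda_T\|_{L^\infty_t L^2_{xy}}$ follows from the embedding $X_S^{1/2+\delta} \hookrightarrow L^\infty_t L^2_{xy}$. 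Applying Proposition \ref{mainXest} to $\chi(t)\Lambda_T$ then yields
\begin{align*}
\N_T(\Lambda) \lesssim \|<\nabla_x>^{\tfrac{1}{2}+\epsilon}<\nabla_y>^{\tfrac{1}{2}+\epsilon}\Lambda_0\|_{L^2(dx\,dy)} + \|<\nabla_x>^{\tfrac{1}{2}+\epsilon}<\nabla_y>^{\tfrac{1}{2}+\epsilon}F_T\|_{X_S^{-1/2+\delta}}.
\end{align*}
Third, I would convert the last $X_S^{-1/2+\delta}$ norm into the desired $L^{2-}L^{6/5+}L^2$ norm by interpolating the dual of the Strichartz inequality \eqref{XXuwenest} (valid for $\Delta_x + \Delta_y$ just as for $\Delta_x - \Delta_y$, so that the analogue of \eqref{dual} holds with $X_W$ replaced by $X_S$) against the trivial inclusion $L^2 \hookrightarrow X_S^0$. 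Since $F_T$ is supported in $[0,T]$ this last norm equals $\|\cdot\|_{L^{2-}[0,T]L^{6/5+}(dx)L^2(dy)}$, closing \eqref{higherlambdalp}.

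For the higher-order bound \eqref{higherlambdalpd}, the key observation is that both $\frac{\partial}{\partial t}$ and $\nabla_{x+y}$ commute with multiplication by $v_N(x-y)$ --- the former trivially, the latter because $v_N(x-y)$ has no dependence on $x+y$. Consequently $\left(\frac{\partial}{\partial t}\right)^i \nabla_{x+y}^j \Lambda$ solves the same equation \eqref{veq1} with forcing $\left(\frac{\partial}{\partial t}\right)^i\nabla_{x+y}^j F$ and initial data $\left(\frac{\partial}{\partial t}\right)^i\nabla_{x+y}^j \Lambda\big|_{t=0}$, and the argument above applies verbatim. I expect the only delicate step to be the simultaneous choice of $\epsilon$ and $\delta$: $\epsilon$ must be small enough (in terms of $\beta < 2/3$) for the absorption of the potential inside Proposition \ref{mainXest}, while $\delta$ must be small enough that the interpolated exponents land as close to $(2,6/5)$ as prescribed; both can clearly be arranged in order, first $\epsilon$, then $\delta$.
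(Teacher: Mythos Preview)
Your proposal is correct and follows essentially the same route as the paper: the paper's proof of this proposition says in full that it is identical to that of Proposition~\ref{maincorW}, except that Proposition~\ref{mainXest} replaces the free estimate~\eqref{cutoff}, and that~\eqref{higherlambdalpd} follows by first differentiating and noting that $\partial_t$ and $\nabla_{x+y}$ commute with the potential. Your observation that the $X_S$ analogue of~\eqref{dual} is needed (and follows from~\eqref{XXuwenest} with the $+$ sign) is exactly the missing link the paper leaves implicit.
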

\begin{proof}
The proof is the same as the one of Proposition \eqref{maincorW}, except that, because of the potential $v_N$ we don't get the estimate
\eqref{cutoff} from general principles but use
 Proposition \eqref{mainXest}.
To prove \eqref{higherlambdalpd}, first differentiate the equation, noting that these derivatives commute with the potential, then
apply the previous result.

\end{proof}

\section{The non-linear equations}
Now we come to our main PDE result
\begin{theorem} \label{mainNL}
Let  $\Lambda$, $\Gamma$ and $\phi$ be solutions of
\eqref{evLambda2}, \eqref{evGamma2}, \eqref{evphi} with initial conditions $\phi_0, k_0 \in \mathcal S$.  There exists $N_0$ such that for all $N \ge N_0$,
the following estimates hold:
\begin{align*}
&\N_T\left(\Lambda\right)
\lesssim
\|<\nabla_x>^{\frac{1}{2}+\epsilon}<\nabla_y>^{\frac{1}{2}+\epsilon}\Lambda(0, \cdot)\|_{L^2}\\
&+ T^{small power} \bigg( \N_T\left(\Lambda\right)
 \dot \N_T\left(\Gamma\right)+ \N^4_T\left(\phi\right)\bigg)
\end{align*}
\begin{align*}
&\dot \N_T\left(\Gamma\right)
\lesssim
\|<\nabla_x>^{\frac{1}{2}+\epsilon}<\nabla_y>^{\frac{1}{2}+\epsilon}\Gamma(0, \cdot)\|_{L^2}\\
&+ T^{small power} \bigg( \N^2_T\left(\Lambda\right)+
\dot \N^2_T\left(\Gamma\right)+ \N^4_T\left(\phi\right)\bigg)
\end{align*}

\begin{align*}
&\N_T\left(\phi\right)
\lesssim
\|<\nabla_x>^{\frac{1}{2}+\epsilon}\phi(0, \cdot)\|_{L^2}\\
&+  T^{small power} \bigg(  \N_T\left(\Lambda\right) +  \dot \N_T\left(\Gamma\right)
+\N^2_T\left(\phi\right)\bigg)\N_T\left(\phi\right)
\end{align*}
So there exists $T_0 >0$ such that,
if $T \le T_0$,
\begin{align*}
&\N_T\left(\Lambda\right)+\dot \N_T\left(\Gamma\right)+\N_T\left(\phi\right)\\
&\lesssim
\|<\nabla_x>^{\frac{1}{2}+\epsilon}<\nabla_y>^{\frac{1}{2}+\epsilon}\Lambda(0, \cdot)\|_{L^2}+
\|<\nabla_x>^{\frac{1}{2}+\epsilon}<\nabla_y>^{\frac{1}{2}+\epsilon}\Gamma(0, \cdot)\|_{L^2}\\
&+
\|<\nabla_x>^{\frac{1}{2}+\epsilon}\phi(0, \cdot)\|_{L^2}
\end{align*}

Also, similar estimates hold for the derivatives which commute with the potential:
\begin{align}
&\N_T\left(\frac{\partial}{\partial t}   \nabla_{x+y} ^j\Lambda \right) +\dot \N_T\left(\frac{\partial}{\partial t} \nabla_{x+y} ^j\Gamma\right)
+\N_T\left(\frac{\partial}{\partial t} \nabla_{x} ^j\phi\right)
 \label{commute}\\
&\lesssim \notag
\|<\nabla_x>^{\frac{1}{2}+\epsilon}<\nabla_y>^{\frac{1}{2}+\epsilon}\frac{\partial}{\partial t} \nabla_{x+y} ^j
\Lambda \bigg|_{t=0}\|_{L^2}\notag\\
&+\notag
\|<\nabla_x>^{\frac{1}{2}+\epsilon}<\nabla_y>^{\frac{1}{2}+\epsilon}
\frac{\partial}{\partial t} \nabla_{x+y} ^j
\Gamma \bigg|_{t=0}\|_{L^2}\\
&+\notag
\|<\nabla_x>^{\frac{1}{2}+\epsilon}
\frac{\partial}{\partial t}\nabla_{x} ^j
\phi |_{t=0}\|_{L^2}
\end{align}
The time interval $T_0$ and the implicit constants in the above inequalities depend only on
\begin{align*}
&\|<\nabla_x>^{\frac{1}{2}+\epsilon}<\nabla_y>^{\frac{1}{2}+\epsilon}\Lambda(0, \cdot)\|_{L^2}+\\
&\|<\nabla_x>^{\frac{1}{2}+\epsilon}<\nabla_y>^{\frac{1}{2}+\epsilon}\Gamma(0, \cdot)\|_{L^2}+\\
&\|<\nabla_x>^{\frac{1}{2}+\epsilon} \phi(0, \cdot)\|_{L^2}
\end{align*}

\end{theorem}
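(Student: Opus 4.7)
The plan is to combine the linear inhomogeneous estimates of Propositions \eqref{maincor} and \eqref{maincorW} with trilinear bounds on each nonlinear source term in the Duhamel norm $L^{2-}_t L^{6/5+}_x L^2_y$, then close the system by a bootstrap once $T$ is small enough. The $T^{\text{small power}}$ gain will come from H\"older in time, exploiting that the norms $\N_T(\Lambda), \dot\N_T(\Gamma), \N_T(\phi)$ contain $L^\infty_t$ components while the collapsing pieces are already in $L^2_t$.

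First I would apply Proposition \eqref{maincor} to the $\Lambda$--equation \eqref{evLambda2}, Proposition \eqref{maincorW} to the $\Gamma$--equation \eqref{evGamma2}, and the $\phi$--part of Proposition \eqref{maincorW} to \eqref{evphi}. This reduces the three a priori inequalities to estimating the right-hand sides in $L^{2-}_t L^{6/5+}_x L^2_y$, uniformly in $N$. The model trilinear source from \eqref{evLambda2} is
\[
I(x_1, x_2) = \int v_N(x_1 - y)\, \Lambda(x_1, y)\, \Gamma(y, x_2)\, dy,
\]
which I would bound by changing variables $y = x_1 + z$ so that $I = \int v_N(-z)\,\Lambda(x_1, x_1+z)\,\Gamma(x_1+z, x_2)\, dz$, then applying H\"older. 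The factor $\Lambda(x_1, x_1+z)$ is controlled, after a shift, by the diagonal collapsing piece $\sup_z \|\langle\nabla_x\rangle^{1/2+\epsilon}\Lambda(t, x+z, x)\|_{L^2_t L^2_x}$ built into $\N_T(\Lambda)$, while $\Gamma(x_1+z, x_2)$ is placed in $L^\infty_t L^2_{x_1 x_2}$ from $\dot\N_T(\Gamma)$. The weight $v_N(-z)$ contributes only $\|v_N\|_{L^1_z} = O(1)$ uniformly in $N$. A small amount of Sobolev embedding converts the resulting mixed Lebesgue exponents into the target $L^{6/5+}_{x_1}$, with the derivatives distributed by the cheap Leibniz substitute introduced in the proof of Proposition \eqref{mainXest}. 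The composition terms in \eqref{evGamma2} such as $\int v_N(x_1-y)\,\bar\Lambda(x_1,y)\,\Lambda(y,x_2)\,dy$ produce $\N_T(\Lambda)^2$, the $\bar\Gamma\bar\Gamma$ terms produce $\dot\N_T(\Gamma)^2$, and the pure $|\phi|^{4}$ contributions give $\N_T(\phi)^4$. The cubic sources in \eqref{evphi} are handled by pulling one $\phi$ into Strichartz and collapsing the remaining $\Lambda$ or $\Gamma$ factor. Summing these trilinear inequalities produces the three displayed estimates in the statement; choosing $T_0$ small enough yields a closed bound on $\N_T(\Lambda) + \dot\N_T(\Gamma) + \N_T(\phi)$ in terms of the data norm, and existence follows by the standard Picard iteration in the same space. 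For the derivative bound \eqref{commute}, I would differentiate the equations in $t$ and tangentially in $x+y$; since $\frac{1}{N} v_N(x-y)$ depends only on the difference $x-y$, it commutes with $\partial_t$ and $\nabla_{x+y}$, so the differentiated equations are of exactly the same form and the same argument applies. The data for these differentiated equations at $t=0$ are computed from \eqref{evLambda2}--\eqref{evphi} and are controlled by the hypotheses \eqref{IVP1}--\eqref{IVP4}.

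The main obstacle will be carrying out the above source-term bounds \emph{uniformly in $N$}, because $v_N$ concentrates as $N \to \infty$ and its $L^p$ norms for $p>1$ blow up. The linear half of this difficulty is already handled by Proposition \eqref{mainXest}, which absorbs the potential term $\frac{1}{N} v_N(x-y) \Lambda$ into $X_S^{1/2+\delta}$ via the key bound $\|\langle\nabla_x\rangle^{1/2+\epsilon}\langle\nabla_y\rangle^{1/2+\epsilon} \frac{1}{N} w_N\|_{L^{6/5}} \le C N^{-\text{small}}$; this is precisely where the restriction $\beta < 2/3$ enters. The uniform-in-$N$ control of the nonlinear sources works because the only places $v_N$ appears in \eqref{evphi}--\eqref{evGamma2} are as a convolution paired against $\Lambda$ or $\Gamma$, and the shifted-diagonal collapsing estimates $\sup_z \|\cdot(t, x+z, x)\|_{L^2_t L^2_x}$ of Lemma \eqref{spacetime} and Proposition \eqref{mainXest} are exactly adapted, through the change of variables $y \mapsto x + z$, to trade the concentration of $v_N$ for its uniform $L^1_z$ norm while keeping the derivative budget intact.
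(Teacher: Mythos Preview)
Your proposal is correct and essentially matches the paper's proof: apply Propositions \eqref{maincor} and \eqref{maincorW}, then bound each source term in $L^{2-}_t L^{6/5+}_x L^2_y$ via the substitution $y\mapsto x+z$ (so $v_N$ enters only through its $L^1$ norm) together with the shifted-collapsing estimate and Sobolev embedding, gaining $T^{\text{small}}$ by H\"older in time. The one minor difference is that the paper distributes $\langle\nabla_x\rangle^{1/2+\epsilon}\langle\nabla_y\rangle^{1/2+\epsilon}$ on the products via the classical Kato--Ponce fractional Leibniz rule rather than the Fourier-side ``cheap substitute'' from Proposition \eqref{mainXest}; both work here, but Kato--Ponce is more direct since the factors are physical-space quantities.
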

\begin{proof}

 For equation \eqref{evphi}, which can be abbreviated as
 \begin{align*}
 &\frac{1}{i}\partial_{t}\phi -\Delta\phi \\
&=- (v_N \Lambda) \circ \overline \phi
-(v_N \overline\Gamma )\circ \phi -( v_N *Tr \Gamma )\cdot \phi \notag
+ 2(v_N*|\phi|^2)\phi\\
  &:=RHS\eqref{evphi}
\end{align*}
we first apply Proposition \eqref{maincorW}:
 \begin{align*}
&  \N_T(\phi) \lesssim
\|<\nabla_x>^{\frac{1}{2}+\epsilon} \phi(0, \cdot)\|_{L^2}+
\|<\nabla_x>^{\frac{1}{2}+\epsilon}RHS\eqref{evphi}\|_{L^{2-}([0, T]) L^{6/5+}(dx)} \\
&\lesssim \|<\nabla_x>^{\frac{1}{2}+\epsilon} \phi(0, \cdot)\|_{L^2}+ T^{small \, power}    \|<\nabla_x>^{\frac{1}{2}+\epsilon}RHS\eqref{evphi}\|_{L^{2}([0, T]) L^{6/5+}(dx)}\\
& \lesssim \|<\nabla_x>^{\frac{1}{2}+\epsilon} \phi(0, \cdot)\|_{L^2}+ T^{small \, power}
 \bigg( \N_T(\Lambda) + \dot\N_T(\Gamma)+ \N^2_T(\phi)\bigg)\N_T(\phi)\\
\end{align*}
The last line follow from  the classical fractional Leibnitz rule in $L^p$ spaces due to Kato and Ponce, \cite{K-Pon} .
 We only present a typical term in RHS\eqref{evphi}:
\begin{align*}
&\|<\nabla_x>^{\frac{1}{2}+\epsilon}\bigg(( v_N *Tr \Gamma )\cdot \phi \bigg)\|_{L^{2}([0, T]) L^{6/5+}(dx)}\\
&\lesssim \| (v_N *Tr \Gamma )\cdot \phi \|_{L^{2}([0, T]) L^{6/5+}(dx)}+
\||\nabla_x|^{\frac{1}{2}+\epsilon}\bigg((v_N *Tr \Gamma )\cdot \phi\bigg)\|_{L^{2}([0, T]) L^{6/5+}(dx)}\\
&\lesssim \|Tr \Gamma\|_{L^2([0, T]) L^3(dx)} \|\phi\|_{L^{\infty}L^{2+}}\\
&+\||\nabla_x|^{\frac{1}{2}+\epsilon}Tr \Gamma \|_{L^{2}([0, T]) L^{2}(dx)}\|\phi\|_{L^{\infty}L^{3+}}
+\|Tr \Gamma \|_{L^{2}([0, T]) L^{3+}(dx)} \|\nabla_x|^{\frac{1}{2}+\epsilon} \phi\|_{L^{\infty}L^{2}}\\
&\lesssim  \dot \N_T(\Gamma)\N_T(\phi)
\end{align*}

Now we deal with the equation \eqref{evLambda2}, which can be abbreviated as
 \begin{align*}
&\left(\S + \frac{1}{N} v_N\right) \Lambda\\
&=-(v_N \Lambda) \circ  \Gamma -  \overline \Gamma \circ (v_N \Lambda)\\
&-\big( \left(v_N*  Tr   \Gamma\right)(x)+ \left(v_N* Tr \Gamma\right)(y)\big)   \Lambda(x, y)\\
&-\left(v_N \overline\Gamma\right)\circ\Lambda - \Lambda\circ \left(v_N\Gamma\right) \\
&+2(v_N*|\phi|^2)(x)\phi (x)\phi(y) +2(v_N*|\phi|^2)(y)\phi (y) \phi(x)\\
&:= RHS\eqref{evLambda2}
\end{align*}
Applying Proposition \eqref{maincor} to the equation \eqref{evLambda2}\\
\begin{align*}
&\N_T(\Lambda)
\lesssim \|<\nabla_x>^{\frac{1}{2}+\epsilon}<\nabla_y>^{\frac{1}{2}+\epsilon} \Lambda(0, \cdot)\|_{L^2}\\
&+\|<\nabla_x>^{\frac{1}{2}+\epsilon}<\nabla_y^{\frac{1}{2}+\epsilon} >RHS\eqref{evLambda2}\|_{L^{2-}[0, T]L^{6/5+}(dx)L^2(dy)}
\end{align*}
We would like to estimate $<\nabla_x>^{\frac{1}{2}+\epsilon}<\nabla_y>^{\frac{1}{2}+\epsilon} RHS\eqref{evLambda2}$ in $L^2L^{\frac{6}{5} + }L^2 $
to gain a small power of $T$ from Cauchy-Schwarz in time.

  Typical term in RHS \eqref{evLambda2}:
 \begin{align*}
  &<\nabla_x>^{\frac{1}{2}+\epsilon}<\nabla_y^{\frac{1}{2}+\epsilon} >\bigg(\left(v_N\Lambda\right) \circ  \Gamma (x, y)\bigg)\\
 & =\int v_N(z)<\nabla_x>^{\frac{1}{2}+\epsilon}<\nabla_y^{\frac{1}{2}+\epsilon} >\left( \Lambda (x, x-z)  \Gamma(x-z, y )\right)dz
 \end{align*}
Applying the fractional Leibniz rule in $L^p$ spaces,
 \begin{align*}
 \int v_N(z) &\|<\nabla_x>^{\frac{1}{2}+\epsilon}<\nabla_y^{\frac{1}{2}+\epsilon} >\bigg( \Lambda (x, x-z)  \Gamma(x-z, y )\bigg)\|_{L^2([0, T])L^{\frac{6}{5} + }(dx)L^2(dy)} dz\\
 \lesssim \int v_N(z)& \|<\nabla_x>^{\frac{1}{2}+\epsilon} \Lambda (x, x-z)\|_{L^2([0, T])L^2(dx)} \times\\
  &\| <\nabla_y^{\frac{1}{2}+\epsilon} >\Gamma(x-z, y )\|_{L^{\infty}([0, T])L^{3+  }(dx)L^2(dy)} dz\\
 +\int v_N(z) &\|\Lambda (x, x-z)\|_{L^2([0, T])L^{3+}(dx)} \times\\
 &\| <\nabla_x>^{\frac{1}{2}+\epsilon}  <\nabla_y^{\frac{1}{2}+\epsilon} >\Gamma(x-z, y )
 \|_{L^{\infty}([0, T])L^{2  }(dx)L^2(dy)} dz\\
 &\lesssim \N_T(\Lambda)\dot  \N_T(\Gamma)
 \end{align*}

 All other terms are treated in a similar manner. In fact, all terms on the RHS\eqref{evLambda2}
  and RHS\eqref{evGamma2}
 are of the form $(v_N F) \circ G$ or $(v_N * Tr F) G$ where $F$, $G$ can be $\Lambda$, $\Gamma$, $\phi \otimes \phi$ of
 $\phi \otimes \overline \phi$ (or their complex conjugates) and $\N_T(F)$, $\N_T(G)$ are estimated as above by
 Proposition \eqref{maincorW} or  Proposition \eqref{maincor}.

\end{proof}

\section{Estimates for $\sht$ } \label{estforu}
Recall the equation \eqref{sseq}, which can be written explicitly as
\begin{align}
&\S\left(\sht\right)= - 2v_N \Lambda\label{sht}\\
&- (v_N \Lambda) \circ p_2 - \overline p_2 \circ (v_N \Lambda)\notag\\
 &-\left(\left(v_N* Tr \Gamma\right)(x)+ \left(v_N* Tr \Gamma\right)(y)\right) \sht(x, y)
-\left(v_N\Gamma\right)\circ \sht - \sht\circ \left(v_N\Gamma\right)\notag\\
&:=RHS\eqref{sht} \notag
 \end{align}
Now that we control the quantities \eqref{commute} we use proofs similar to those of section 4 of \cite{GM}, or section 3
 of \cite{elif2} with $- v_N \Lambda $ playing the role of $m$ and $\tilde \S$ playing the role of $\S$, at least locally in time.
 The crucial ingredient is that
 \begin{align}
 &\sup_z \|\Lambda(t, x +z, x)\|_{L^2(dx)}+ \notag
 \sup_z \|\left(\frac{\partial}{\partial t}\right)^i\Lambda(t, x +z, x)\|_{L^2(dx)}\\
 &+ \|\Lambda\|_{L^{\infty}} + \|\left(\frac{\partial}{\partial t}\right)^i\Lambda\|_{L^{\infty}}
  \lesssim 1 \label{crucial}
  \end{align}
  for $i=0, 1$. The $\nabla_{x+y}$ derivatives have been used to control \\
 $ \|\Lambda(t, x +z, x)\|_{L^{\infty}(dx)} \le C \sum_{j \le 2} \|\left(\nabla^j_{x+y}\Lambda\right)(t, x +z, x)\|_{L^{2}(dx)}$.

 The reader is warned, with apologies, that $\S$ in \cite{GM} is not the $\S$ of the current paper (which is a purely differential operator), but what was called $\S_{old}$ in the introduction.

 In this section, we will prove

 \begin{theorem} \label{s2thm}
Let $\sht$, $\cht$ satisfy the equations \eqref{sseq}, \eqref{wweq} with initial conditions as in Theorem
\eqref{GM3thm}. Then, for $T_0$ as in Theorem
\eqref{mainNL} and $0 \le j \le 2$,
\begin{align}
&\|   \nabla_{x+y}^j  \sht(t, \cdot, \cdot)\|_{L^2(dxdy)} \lesssim 1 \label{l2}\\
&\sup_x\|\sht(t, x, \cdot)\|_{L^2(dy)}  \lesssim  1 \label{linf}
  \, \, \, (0 \le t \le T_0)
\end{align}
\end{theorem}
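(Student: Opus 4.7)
The strategy will be to adapt the scheme used in Section 4 of \cite{GM} and Section 3 of \cite{elif2} to the present equation \eqref{sht}, with $-v_N \Lambda$ playing the role of the source $m_N$ in those references. Of all the terms on the right-hand side of \eqref{sht}, only $-2 v_N(x-y)\Lambda(t,x,y)$ is genuinely singular: it behaves like $\delta(x-y)\Lambda(t,x,x)$ and cannot be estimated in any $X^{-1/2+}$-type space. All other terms carry either a bounded multiplier (such as $v_N * \mathrm{Tr}\,\Gamma$, which is locally uniformly bounded in view of \eqref{crucial}) or a composition with a kernel whose diagonal is controlled, and are handled by direct Strichartz/energy estimates. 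The obstruction posed by the singular term is exactly what forces the integration-by-parts-in-time manoeuvre \eqref{timetrick}.

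For \eqref{l2} in the case $j=0$, the plan is to write the solution via Duhamel based at $t=0$ and, in the contribution from $-2v_N\Lambda$, integrate by parts in the $s$ variable as in \eqref{timetrick}. This produces boundary contributions of the form $\Delta_{x,y}^{-1}(v_N\Lambda)(\tau,\cdot)$ at $\tau=0$ and $\tau=t$, plus an interior term involving $\Delta_{x,y}^{-1}\partial_s(v_N\Lambda)$. A direct Fourier-side computation in the six variables $(x,y)$, using $\widehat{\delta(x-y)g(x)}(\xi,\eta)=\hat g(\xi+\eta)$ and a standard one-variable scaling, shows that $\|\Delta_{x,y}^{-1}(v_N\Lambda)(\tau,\cdot)\|_{L^2_{x,y}}$ is controlled uniformly in $N$ by a suitable norm of the diagonal function $\Lambda(\tau,x,x)$ of the type furnished by the collapsing estimate \eqref{higherlambda}. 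The boundary term at $\tau=0$ is then bounded using the initial-data hypotheses \eqref{IVP3}, \eqref{IVP4}; the boundary term at $\tau=t$ uses the same estimate applied to the local-in-time output $\N_T(\Lambda)\lesssim 1$ of Theorem \eqref{mainNL}. The interior term requires $\|\partial_s\Lambda(s,x,x)\|_{L^2_x}$ locally in $s$, i.e.\ the quantity \eqref{whatweneed} already singled out in the Guide; this is precisely what is controlled by the derivative estimate \eqref{commute}.

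For $\nabla_{x+y}^j \sht$ with $j=1,2$, I would use that $\nabla_{x+y}$ commutes both with $\S$ and with the kernel $v_N(x-y)$, so differentiating \eqref{sht} and running the same IBP-in-time argument reduces matters to the analogous estimates for $\nabla_{x+y}^j \Lambda$ and $\nabla_{x+y}^j \partial_t \Lambda$ evaluated on the diagonal, which are again delivered by \eqref{commute}. The remaining pieces on the right-hand side of \eqref{sht} are then easy: $(v_N * \mathrm{Tr}\,\Gamma)\sht$ is closed by an energy estimate using \eqref{crucial} and $\dot \N_T(\Gamma)\lesssim 1$; $(v_N\Gamma)\circ \sht$ and its mirror are handled by placing $\Gamma$ in $L^\infty$ on the diagonal via \eqref{crucial} and treating the composition with $\sht$ as a multiplier; and the terms involving $p_2 = \cht-I$ close by a standard bootstrap once the bound for $\sht$ itself has been propagated.

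The main obstacle, and the most delicate point, is \eqref{linf}. The bound $\sup_x \|\sht(t,x,\cdot)\|_{L^2_y}$ does not follow from \eqref{l2} by Sobolev embedding, because only $\nabla_{x+y}$ derivatives are under control. My plan is to fix $x$, regard $y\mapsto \sht(t,x,y)$ as a three-dimensional Schr\"odinger evolution in which $x$ enters as a parameter, and apply Strichartz in $y$ to the corresponding Duhamel formula while taking the essential supremum in $x$ outside. The singular source again requires the IBP-in-time trick, now in a form that keeps the $x$ parameter pointwise; the key kernel bound to establish is that $\sup_x \|\Delta_y^{-1}\bigl(v_N(x-\cdot)\Lambda(\tau,x,\cdot)\bigr)\|_{L^2_y}$ is controlled uniformly in $N$ by the pointwise $L^\infty$ bound on $\Lambda$ from \eqref{crucial} together with $\sup_x\|v_N(x-\cdot)\|_{L^{6/5}}\lesssim 1$ (valid for $\beta\le 2/3$). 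This is exactly the type of kernel-level estimate carried out in Section 4 of \cite{GM} and Section 3 of \cite{elif2}, and I expect to be able to import it once the local-in-time inputs \eqref{crucial} and \eqref{commute} supplied by Theorem \eqref{mainNL} are plugged in.
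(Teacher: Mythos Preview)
Your treatment of \eqref{l2} is essentially the paper's approach: the singular source $-2v_N\Lambda$ is isolated and handled by the integration-by-parts-in-time trick of \eqref{timetrick}, with the interior term controlled by \eqref{whatweneed} via \eqref{commute}, while the remaining potential-type terms (those involving $v_N*\mathrm{Tr}\,\Gamma$, $v_N\Gamma$, and $p_2$) are absorbed by energy estimates. The paper packages this a bit more explicitly by writing $\sht=s_a+s_e$, first solving $\tilde\S s_a=-2v_N\Lambda$ via a general lemma (controlling $E$ by $\|F\|_{L^2(d(x+y))L^1(d(x-y))}$ and the same for $\partial_s F$), and then closing a coupled energy system for $(s_e,\overline{p_2})$; but your bootstrap description captures the same mechanism.

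The real divergence is at \eqref{linf}. You assert that $\sup_x\|\sht(t,x,\cdot)\|_{L^2_y}$ \emph{cannot} follow from \eqref{l2} by Sobolev embedding because only $\nabla_{x+y}$ derivatives are available, and you propose instead a parametric-in-$x$ Strichartz argument in $y$. This is the step where you miss the paper's point. After the change of variables $y=x+z$, one has $\nabla_x\bigl[\sht(t,x,x+z)\bigr]=(\nabla_{x+y}\sht)(t,x,x+z)$, so $\nabla_{x+y}$ is precisely the full $x$-gradient of the map $x\mapsto\sht(t,x,x+z)$ at fixed $z$. Hence
\[
\sup_x\|\sht(t,x,\cdot)\|_{L^2_y}=\big\|\,\|\sht(t,x,x+z)\|_{L^2_z}\big\|_{L^\infty_x}
\le \big\|\sup_x|\sht(t,x,x+z)|\big\|_{L^2_z}
\lesssim \sum_{j=0}^2\|\nabla_{x+y}^j\sht\|_{L^2_{x,y}},
\]
using Minkowski and three-dimensional Sobolev embedding. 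This is exactly why the paper carries $\nabla_{x+y}^j$ for $j\le 2$ through Theorem \eqref{mainNL} and through the $L^2$ estimate \eqref{l2}. Your alternative route---freezing $x$ and running a $3$-dimensional evolution in $y$---is delicate because $\S$ contains $\Delta_x$, so the $y$-evolution is not autonomous and the free propagator $e^{it\Delta_x}$ in the $x$ variable does not allow pointwise-in-$x$ bounds; at best you would be reproducing machinery from \cite{GM} that the present paper deliberately bypasses with the one-line Sobolev argument above.
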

Define $\ch=\delta(x-y)+p$. The following is immediate, as in the proof of Corollary 4.2 in \cite{GM}:
\begin{corollary} \label{shest}
 The following estimates hold uniformly in $0 \le t \le T_0$
\begin{align*}
&\|     \sh(t, \cdot, \cdot)\|_{L^2(dxdy)} \lesssim 1\\
&\|   p(t, \cdot, \cdot)\|_{L^2(dxdy)} \lesssim  1\\
&\sup_x\|\sh(t, x, \cdot)\|_{L^2(dy)}  \lesssim 1 \\
&\sup_x\|p(t, x, \cdot)\|_{L^2(dy)}  \lesssim 1
\end{align*}

\end{corollary}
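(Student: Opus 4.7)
\medskip

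The plan is to follow the strategy of \cite{GM} Section 4 (or \cite{elif2} Section 3) with the reduction already indicated in the Guide to the Proof. Rewrite \eqref{sht} in Duhamel form, separating the dangerous driving term $-2v_N(x-y)\Lambda(t,x,y)$ (which is essentially $-2v_N(x-y)\Lambda(t,x,x)$ up to a smoother remainder involving $\nabla_{x-y}\Lambda$) from the ``lower order'' terms $(v_N\Lambda)\circ p_2$, $\overline{p_2}\circ(v_N\Lambda)$, $(v_N*\operatorname{Tr}\Gamma)\cdot \sht$ and $(v_N\Gamma)\circ \sht$. The quantity $\|v_N\Lambda\|_{L^2(dxdy)}$ is not uniformly bounded in $N$, so standard energy estimates on the inhomogeneous term fail; this is the main obstacle and is precisely what the time-integration-by-parts trick \eqref{timetrick} is designed to overcome.

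First I would handle the singular driving term. Write
\[
\int_0^t e^{i(t-s)\Delta_{x,y}}\bigl(v_N(x-y)\Lambda(s,x,x)\bigr)\,ds
= \Bigl[\Delta_{x,y}^{-1}\cdots\Bigr]_0^t - \int_0^t e^{i(t-s)\Delta_{x,y}}\Delta_{x,y}^{-1}\partial_s\bigl(v_N(x-y)\Lambda(s,x,x)\bigr)\,ds.
\]
The key observation is that $\Delta_{x,y}^{-1}v_N(x-y)$ is uniformly bounded in $L^2_{\rm loc}$ in $N$ for $\beta<2/3$, once multiplied by a bounded function of $x$ (indeed, $v_N$ has $L^1$ norm $\|v\|_{L^1}$ and $\Delta^{-1}$ in $6$ dimensions supplies enough smoothing; this matches what is done in \cite{GM}). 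The time derivative $\partial_s \Lambda(s,x,x)$ is controlled by \eqref{crucial}, which is exactly the content of \eqref{commute} in Theorem~\ref{mainNL} with $i=1$ together with the $\nabla_{x+y}^j$ derivatives used to pass to $L^\infty_x$ on the diagonal via Sobolev. This bootstraps the singular source into an $L^2(dxdy)$ bound uniform in $N$.

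Second, the remaining terms on the right-hand side of \eqref{sht} are composition-type terms $(v_N F)\circ G$ or $(v_N*\operatorname{Tr}F)\,G$ with $F\in\{\Lambda,\Gamma\}$ and $G\in\{\sht,\Gamma\}$. For these I would use the standard energy estimate on $\S$ in $L^2(dxdy)$ together with Young's inequality in the $(x-y)$ variable: since $\|v_N\|_{L^1}=\|v\|_{L^1}$, each such term is bounded, pointwise in $t$, by $C\,\|\sht(t,\cdot,\cdot)\|_{L^2}$ times the (already controlled) quantities $\sup_z\|\Lambda(t,x+z,x)\|_{L^\infty_x}$, $\|\operatorname{Tr}\Gamma\|_{L^\infty}$, and similar bounds for $\Gamma$, all of which are finite on $[0,T_0]$ from \eqref{commute} plus Sobolev in $x$. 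A Gronwall argument in $t$ then closes the estimate \eqref{l2} for $j=0$.

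Third, for $j=1,2$, apply $\nabla_{x+y}^j$ to \eqref{sht}. These derivatives commute with $v_N(x-y)$, with $(v_N*\operatorname{Tr}\Gamma)(x)+(v_N*\operatorname{Tr}\Gamma)(y)$ (up to distributing derivatives using the $L^p$ fractional Leibniz rule), and with $\S$, so the same scheme applies, driven now by $\nabla_{x+y}^j\Lambda$ whose time derivative is controlled by \eqref{commute}. Finally, for \eqref{linf}, the endpoint $\sup_x\|\sht(t,x,\cdot)\|_{L^2(dy)}$ is obtained via Sobolev in the $x$-slot: bound $\sup_x|\sht(t,x,y)|^2\le\sum_{j\le 2}\int |\nabla_{x+y}^j \sht(t,x,y)|^2\,dx$ after trading the $x$-only derivative for one of the $\nabla_{x+y}^j$ derivatives, then integrate in $y$ and use \eqref{l2}. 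The hard part remains the first step: integrating by parts in time against $\Delta_{x,y}^{-1}$ and verifying that the smoothed singular kernel is uniformly bounded in $N$ for $\beta<2/3$; the rest is bookkeeping with the bounds already assembled in Theorem~\ref{mainNL}.
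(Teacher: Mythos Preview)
Your approach mirrors the paper's proof of Theorem~\ref{s2thm} (which is what really needs proving; the Corollary then follows by the operator identities $\sht = 2\,\sh\circ\ch$, $\cht - \delta = 2\,\shb\circ\sh$ together with $\ch\ge\delta$ as a positive operator, exactly as in \cite{GM}, Corollary~4.2 --- a step you do not mention). The time-integration-by-parts for the singular driver $-2v_N\Lambda$, the use of \eqref{crucial} to control $\partial_t\Lambda$ on the diagonal, and the Sobolev argument for \eqref{linf} are all correct and match the paper (compare Lemmas~\ref{ellipest} and~\ref{salemma}).

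There is, however, a genuine gap in your Gronwall step. You list the remaining right-hand terms as having $G\in\{\sht,\Gamma\}$, but two of them are $(v_N\Lambda)\circ p_2$ and $\overline{p_2}\circ(v_N\Lambda)$ with $p_2=\cht-\delta$, and you offer no mechanism to bound $p_2$. The relation $p_2=2\,\shb\circ\sh$ is circular (that is what you ultimately want), and there is no algebraic inequality giving $\|p_2\|_{L^2}$ from $\|\sht\|_{L^2}$ alone. The paper closes this by invoking the companion equation \eqref{wweq}: one writes $\sht=s_a+s_e$ with $\tilde{\S}s_a=-2v_N\Lambda$ (controlled by your first step), and then $(s_e,\overline{p_2})$ satisfy the \emph{coupled} system
\[
\tilde{\S}(s_e)= -(v_N\Lambda)\circ p_2 - \overline{p_2}\circ(v_N\Lambda),\qquad
\tilde{\W}(\overline{p_2})=M-(v_N\Lambda)\circ\overline{s_e}+s_e\circ(\overline{v_N\Lambda}),
\]
with $M$ built from $s_a$ and hence already bounded. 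A joint energy estimate on $(s_e,p_2)$ then closes. Without bringing in the $\tilde{\W}$ equation, your Gronwall loop on $\sht$ alone cannot close.
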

We start with some preliminary lemmas.

\begin{lemma} \label{startlemma} (replacing Lemma 4.4 in \cite{GM}.) Let $s_a^0$ be the solution to
\begin{align}
\S s_a^0 = - 2  v_N(x-y) \Lambda \label{starteq}\\
s_a^0(0, x, y)=\sht(0, x, y) \notag
\end{align}
Then
\begin{align*}
&\|  s_a^0(t, \cdot, \cdot)\|_{L^2(dxdy)} \lesssim 1\\
&(0 \le t \le T_0)
\end{align*}
with similar estimates for $\nabla_{x+y}^js_a^0$.

\end{lemma}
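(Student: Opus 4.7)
The plan is to follow the time integration-by-parts strategy sketched in \eqref{timetrick}, which is the only way to get a bound uniform in $N$ since a naive energy estimate gives $\|v_N\Lambda\|_{L^2}\lesssim \|v_N\|_{L^2}\|\Lambda\|_{L^\infty}\sim N^{3\beta/2}$, which blows up. Writing Duhamel's formula for \eqref{starteq},
\begin{align*}
s_a^0(t) = e^{it\Delta_{x,y}}\sht(0,\cdot) - 2\int_0^t e^{i(t-s)\Delta_{x,y}}\bigl(v_N(x-y)\Lambda(s,x,y)\bigr)\,ds,
\end{align*}
the homogeneous piece is bounded in $L^2$ by $\|\sht(0,\cdot)\|_{L^2}$, which is controlled by hypothesis \eqref{IVP4}.

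For the Duhamel integral, I use the identity $e^{i(t-s)\Delta_{x,y}} = i\,\Delta_{x,y}^{-1}\partial_s e^{i(t-s)\Delta_{x,y}}$ and integrate by parts in $s$, setting $f(s,x,y):=v_N(x-y)\Lambda(s,x,y)$:
\begin{align*}
\int_0^t e^{i(t-s)\Delta_{x,y}}f(s)\,ds = i\,\Delta_{x,y}^{-1}f(t) - i\,e^{it\Delta_{x,y}}\Delta_{x,y}^{-1}f(0) - i\int_0^t e^{i(t-s)\Delta_{x,y}}\Delta_{x,y}^{-1}\partial_s f(s)\,ds.
\end{align*}
It suffices to show, uniformly in $N$ and in $s\in[0,T_0]$,
\begin{align*}
\bigl\|\Delta_{x,y}^{-1}\bigl(v_N(x-y)\Lambda(s,x,y)\bigr)\bigr\|_{L^2(dxdy)} + \bigl\|\Delta_{x,y}^{-1}\bigl(v_N(x-y)\partial_s\Lambda(s,x,y)\bigr)\bigr\|_{L^2(dxdy)}\lesssim 1,
\end{align*}
after which the $L^2$ norm of $s_a^0(t)$ is bounded by $1 + t\cdot\sup_s(\cdots)\lesssim 1$ on $[0,T_0]$.

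The key lemma needed is therefore: for any reasonable $F(x,y)$,
\begin{align*}
\bigl\|\Delta_{x,y}^{-1}\bigl(v_N(x-y)F(x,y)\bigr)\bigr\|_{L^2(dxdy)} \lesssim \sup_{z}\|F(x+z,x)\|_{L^2(dx)} \;+\; \|F\|_{L^2(dxdy)},
\end{align*}
with constants independent of $N$. To prove this, I pass to the coordinates $u=x+y$, $v=x-y$, under which $\Delta_{x,y} = 2(\Delta_u+\Delta_v)$ and $v_N(x-y)F = v_N(v)\widetilde F(u,v)$. In Fourier variables $(\mu,\nu)$ this reduces to estimating
\begin{align*}
\int \frac{|\,\hat v_N\ast_{\nu}\widehat{\widetilde F}(\mu,\cdot)(\nu)\,|^2}{(|\mu|^2+|\nu|^2)^2}\,d\mu\,d\nu,
\end{align*}
and since $|\hat v_N(\nu)| = |\hat v(\nu/N^\beta)| \le \|\hat v\|_\infty$ uniformly in $N$, Cauchy--Schwarz and a standard $\int(|\mu|^2+|\nu|^2)^{-2}d\nu \sim |\mu|^{-1}$ computation bound the integral by a combination of the $L^2(dx)$ norm of $F$ along the diagonals $y=x+z$ (the ``collapsing'' part) and its full $L^2(dxdy)$ norm. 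These two ingredients are precisely what Theorem \ref{mainNL} supplies: the norms $\N_T(\Lambda)$ and $\N_T(\partial_t\Lambda)$ control $\sup_z\|\Lambda(s,x+z,x)\|_{L^2_x}$, $\sup_z\|\partial_s\Lambda(s,x+z,x)\|_{L^2_x}$, and the full $L^2$ norms, uniformly in $s\in[0,T_0]$ and $N\ge N_0$, using the initial data hypotheses \eqref{IVP2}, \eqref{IVP3}.

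For the derivatives $\nabla_{x+y}^j s_a^0$, note that $\nabla_{x+y}^j$ commutes with both $\Delta_{x,y}$ and multiplication by $v_N(x-y)$, so the same equation holds with $\Lambda$ replaced by $\nabla_{x+y}^j\Lambda$, and the argument above runs identically, using the $j\le 2$ cases of \eqref{commute} together with \eqref{IVP3}, \eqref{IVP4}. The main obstacle is really just the verification of the key $\Delta^{-1}$ estimate uniform in $N$; once that is in hand, the argument is a two-line application of Duhamel and integration by parts.
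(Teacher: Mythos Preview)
Your overall strategy—Duhamel plus integration by parts in $s$ to trade the singular forcing $v_N\Lambda$ for $\Delta_{x,y}^{-1}(v_N\Lambda)$—is exactly the paper's, and is the right idea. The gap is in the ``key lemma'' you state: the estimate
\[
\bigl\|\Delta_{x,y}^{-1}\bigl(v_N(x-y)F(x,y)\bigr)\bigr\|_{L^2(dxdy)} \lesssim \sup_{z}\|F(x+z,x)\|_{L^2(dx)} + \|F\|_{L^2(dxdy)}
\]
is false as written, and your Fourier sketch does not prove it. After bounding the convolution by $\|\hat v_N\|_\infty\|\hat{\tilde F}(\mu,\cdot)\|_{L^1_\nu}$ and integrating $\int(|\mu|^2+|\nu|^2)^{-2}d\nu\sim|\mu|^{-1}$, you are left with $\int|\mu|^{-1}\|\hat{\tilde F}(\mu,\cdot)\|_{L^1_\nu}^2\,d\mu$, a negative-order norm in the center-of-mass variable $u=x+y$ that is \emph{not} controlled by either term on your right-hand side. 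Concretely, take $F(x,y)=g(x+y)h(x-y)$ with $h$ fixed, $\|g\|_{L^2}=1$, and $\hat g$ supported in $\{|\mu|<\epsilon\}$; then the left side behaves like $\epsilon^{-1/2}$ while both terms on the right are $O(1)$.

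The paper avoids this by not applying $\Delta^{-1}$ globally. It proves instead (Lemma~\ref{ellipest}) the bound with right-hand side $\|F\|_{L^2(d(x+y))L^1(d(x-y))}+\|\partial_sF\|_{L^2(d(x+y))L^1(d(x-y))}$, via a frequency split in the variable dual to $x-y$: at low $(x-y)$-frequency one keeps the un-integrated Duhamel formula and uses that the low-pass projector has an $L^2$ convolution kernel (so maps $L^1(d(x-y))\to L^2$); at high $(x-y)$-frequency one uses the integrated-by-parts formula but dominates $(|\xi|^2+|\eta|^2)^{-1}$ by $|\xi|^{-2}$ (with $\xi$ dual to $x-y$), so that $P_{|\xi|\ge 1}\Delta_{x-y}^{-1}$ again has an $L^2$ kernel. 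Neither branch produces a negative power of the center-of-mass frequency. For $F=v_N\Lambda$ the $L^2_{x+y}L^1_{x-y}$ norm factors as $\|v_N\|_{L^1}\sup_z\|\Lambda(\cdot+z,\cdot)\|_{L^2}$, which is precisely the collapsing quantity you identified—so your intuition about which norm of $\Lambda$ is ultimately used is correct, but the route to it requires the low/high split rather than a single $\Delta^{-1}$ estimate.
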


This lemma is a particular case (with $F=v_N(x-y)\Lambda(t, x, y)$ of a more general result:
\begin{lemma} \label{ellipest}
Let $F $ be a function of $6+1$ variables ($k \ge 0$, $x, y \in \mathbb R^3$, $t \in [0, 1]$)
and let
\begin{align*}
E(x, y,  t)= \int_0^t e^{i (t-s)\Delta_{x, y}} F(s) ds
\end{align*}
Then
\begin{align*}
&\|E(t, \cdot)\|_{L^2} \\
&\lesssim \sup_{0 \le s \le t}\left(\|F(s, \cdot)\|_{L^2(d(x+y))  L^1(d(x-y))}
+\|\frac{\partial}{\partial s}F(s, \cdot)\|_{L^2(d(x+y))  L^1(d(x-y))}\right)
\end{align*}
\end{lemma}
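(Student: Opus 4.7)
The plan is to work on the Fourier side in coordinates adapted to $\Delta_{x,y}$ and split into high and low frequencies, using a time integration-by-parts only in the high-frequency regime. Change variables to $u = x+y$, $v = x-y$, so that $\Delta_{x,y} = 2(\Delta_u+\Delta_v)$ and the norm $\|F\|_{L^2(d(x+y))L^1(d(x-y))}$ becomes $\|F\|_{L^2(du)L^1(dv)}$. The crucial elementary bound is that, since Fourier transform in $v$ maps $L^1(dv) \to L^\infty(d\eta')$ and Plancherel holds in $u$,
\begin{align*}
\sup_{\eta'} \|\widehat{G}(\cdot,\eta')\|_{L^2(d\xi')} \,=\, \sup_{\eta'}\|\widehat{G}^{v}(\cdot,\eta')\|_{L^2(du)} \,\le\, \|G\|_{L^2(du)L^1(dv)}.
\end{align*}
Throughout, I write $|K|^2 := |\xi'|^2+|\eta'|^2$, which (up to a constant) equals the symbol of $-\Delta_{x,y}$.

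For the low-frequency region $\{|K|^2 \le 1\}$, I cannot usefully invert $\Delta$, so I bound $|\widehat{E}(t,\xi',\eta')| \le \int_0^t |\widehat{F}(s,\xi',\eta')| ds$ directly, square, apply Cauchy–Schwarz in $s$, and use that the set $\{|\eta'|\le 1\}\subset\mathbb R^3$ has finite measure:
\begin{align*}
\int_{|K|^2\le 1}|\widehat{E}(t)|^2\,d\xi'd\eta' \,\le\, t\int_0^t\int_{|\eta'|\le 1}\|\widehat{F}(s,\cdot,\eta')\|^2_{L^2(d\xi')}\,d\eta'\,ds \,\lesssim\, t^2\sup_s \|F(s)\|^2_{L^2(du)L^1(dv)},
\end{align*}
which is bounded since $t \in [0,1]$.

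For the high-frequency region $\{|K|^2 > 1\}$, I integrate by parts in $s$ in the formula $\widehat{E}(t,\xi',\eta') = \int_0^t e^{-i(t-s)|K|^2} \widehat{F}(s,\xi',\eta')\,ds$, using $\partial_s e^{-i(t-s)|K|^2} = i|K|^2 e^{-i(t-s)|K|^2}$, to produce
\begin{align*}
\widehat{E}(t,\xi',\eta') \,=\, \frac{i}{|K|^2}\Bigl[\widehat{F}(t) - e^{-it|K|^2}\widehat{F}(0) - \int_0^t e^{-i(t-s)|K|^2}\partial_s\widehat{F}(s)\,ds\Bigr].
\end{align*}
On this region $1/|K|^2 \lesssim 1/(1+|K|^2)$, so (after a Cauchy–Schwarz in $s$ on the integral term) the whole thing reduces to the single key estimate
\begin{align*}
\int \frac{|\widehat{G}(\xi',\eta')|^2}{(1+|K|^2)^2}\,d\xi'd\eta' \,\lesssim\, \|G\|^2_{L^2(du)L^1(dv)}
\end{align*}
applied to $G = F(t),\, F(0),\, \partial_s F(s)$. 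This estimate follows immediately from the pointwise $\eta'$-bound above and the convergence $\int_{\mathbb R^3}(1+|\eta'|^2)^{-2}d\eta' < \infty$: bound $(1+|K|^2)^{-2} \le (1+|\eta'|^2)^{-2}$ and do the $d\xi'$ integral first.

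The only real subtlety is the low-frequency piece, where $\Delta^{-1}$ is unbounded as a map into $L^2$, so the naive integration-by-parts approach produces spurious divergences near the origin in Fourier space; this forces the separate direct Cauchy–Schwarz argument in $s$, which is harmless because $t\le 1$. Combining the two regions gives
\begin{align*}
\|E(t,\cdot)\|_{L^2(\mathbb R^6)} \,\lesssim\, \sup_{0\le s\le t}\bigl(\|F(s)\|_{L^2(du)L^1(dv)} + \|\partial_s F(s)\|_{L^2(du)L^1(dv)}\bigr),
\end{align*}
which is the claim.
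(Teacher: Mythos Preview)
Your proof is correct and follows essentially the same strategy as the paper: split into low and high frequencies, bound the low-frequency piece directly, and integrate by parts in time on the high-frequency piece, with the gain coming from the $L^1\to L^\infty$ mapping property of the Fourier transform in the $v=x-y$ variable. The only cosmetic differences are that the paper cuts in the single frequency variable dual to $x-y$ (rather than in the full $|K|^2$) and phrases the resulting bounds via the $L^2(dx)$ kernels of $P_{|\xi|\le 1}$ and $P_{|\xi|\ge 1}\Delta_x^{-1}$, whereas you work entirely on the Fourier side with the weight $(1+|\eta'|^2)^{-2}$; since you ultimately discard the $\xi'$-part of $(1+|K|^2)^{-2}$ anyway, the two arguments are equivalent.
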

\begin{proof}
Change variables, so we  work in $x, y$ rather than $x-y, x+y$ coordinates. In these coordinates, integrating by parts,
\begin{align}
&E(t, \cdot)=\int_0^t e^{i(t-s)\Delta_{x, y}} F(s, \cdot)ds \label{low}\\
&= \int_0^t e^{i(t-s)\Delta_{x, y}}\Delta_{x, y}^{-1} \frac{\partial}{\partial s}F(s, \cdot)ds
\label{high}\\
&+e^{it\Delta_{x, y}}\Delta_{x, y}^{-1} F(0, \cdot)
-\Delta^{-1} F(t, \cdot)\notag\
\end{align}
We are going to project in frequencies dual to $x$ only. For the low frequency case we use \eqref{low}:
\begin{align*}
&\|P_{|\xi| \le 1}E(t, \cdot)\|_{L^2}=\|\int_0^t e^{i(t-s)\Delta_{x, y}}P_{|\xi| \le 1} F(s, \cdot)ds\|_{L^2}\\
&\le  \sup_{0 \le s \le t}\|P_{|\xi| \le 1}F(s, \cdot)\|_{L^2(dx dy )}\\
&\lesssim \sup_{0 \le s \le t}\|F(s, \cdot)\|_{L^2(dy ) L^1(dx)}
\end{align*}
where we have used the fact that the (physical space) kernel corresponding to $ P_{|\xi| \le 1}$ is in $L^2(dx)$.
For the high frequency case we use \eqref{high}. We only write down one term, the boundary terms being similar:
\begin{align*}
&\|P_{|\xi| \ge 1}E(t, \cdot)\|_{L^2}=\|\int_0^t e^{i(t-s)\Delta_{x, y}}P_{|\xi| \ge 1}\Delta_{x, y}^{-1} \frac{\partial}{\partial s} F(s, \cdot)ds\|_{L^2}\\
&\le \sup_{0 \le s \le t} \|P_{|\xi| \ge 1}\Delta_{x}^{-1} \frac{\partial}{\partial s} F(s, \cdot)ds\|_{L^2(dxdy)}\\
&\lesssim \sup_{0 \le s \le t}\|\frac{\partial}{\partial s}F(s, \cdot)\|_{L^2(dy ) L^1(dx)}
\end{align*}
where we have used that the kernel of $P_{|\xi| \ge 1}\Delta_{x}^{-1}$ in in $L^2(dx)$.
\end{proof}

Next, we include the potential terms:
\begin{lemma} \label{salemma}
Let $s_a$ be the solution to
\begin{align}
&\tilde \S s_a=- 2v_N \Lambda \label{sa}\\
&s_a(0, x, y)=\sht(0, x, y) \notag
\end{align}
\begin{align*}
&\|  s_a^0(t, \cdot, \cdot)\|_{L^2(dxdy)} \lesssim 1  \, \, \, (0 \le t \le T_0)\\
\end{align*}
with similar estimates for $\nabla_{x+y}^j s_a$.
\end{lemma}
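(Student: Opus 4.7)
\textbf{Plan for Lemma \ref{salemma}.} The idea is to treat the potential piece of $\tilde{\S}$ as a perturbation of the pure Schr\"odinger operator $\S$ already handled in Lemma \ref{startlemma}. Write $\tilde{\S} = \S + V_1$, where
\[
V_1 s \,=\, (v_N*\mathrm{tr}\,\Gamma)(x)\,s(x,y) + (v_N*\mathrm{tr}\,\Gamma)(y)\,s(x,y) + \int v_N(z-x)\Gamma(z,x)\,s(z,y)\,dz + \int s(x,z)\,v_N(z-y)\Gamma(z,y)\,dz .
\]
Setting $s_a = s_a^0 + r$ with $s_a^0$ as in Lemma \ref{startlemma}, the function $r$ satisfies $\S r = -V_1 s_a$ with $r(0)=0$, so by Duhamel
\[
\|r(t)\|_{L^2} \,\le\, \int_0^t \|V_1 s_a(\tau,\cdot)\|_{L^2}\,d\tau .
\]
The heart of the proof is the operator bound $\|V_1 s(\tau,\cdot)\|_{L^2} \le C(\tau)\,\|s(\tau,\cdot)\|_{L^2}$ with $C\in L^2([0,T_0])\subset L^1([0,T_0])$; combining this with $\sup_{t\le T_0}\|s_a^0(t)\|_{L^2}\lesssim 1$ from Lemma \ref{startlemma} and Gronwall then yields $\|s_a\|_{L^\infty([0,T_0];L^2)}\lesssim 1$.

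To bound $V_1$ on $L^2$, I will estimate $\|v_N*\mathrm{tr}\,\Gamma\|_{L^\infty(dx)} \le \|v\|_{L^1}\,\|\mathrm{tr}\,\Gamma\|_{L^\infty(dx)}$ for the multiplication terms, and use Schur's test on the kernel $K(x,z)=v_N(z-x)\Gamma(z,x)$, whose row and column sums are both bounded by $\|v\|_{L^1}\|\Gamma\|_{L^\infty(\mathbb R^6)}$, so that the map $g\mapsto \int K(x,z)g(z,y)\,dz$ is bounded on $L^2(dx\,dy)$ with norm $\lesssim \|\Gamma\|_{L^\infty}$. The main obstacle, and the crucial input, is therefore
\[
\|\mathrm{tr}\,\Gamma\|_{L^2([0,T_0];L^\infty(\mathbb R^3))} + \sup_{z}\|\Gamma(\cdot,\cdot+z,\cdot)\|_{L^2([0,T_0];L^\infty(\mathbb R^3))} \,\lesssim\, 1 ,
\]
which I will extract from Theorem \ref{mainNL} as follows. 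Using the chain-rule identity $\nabla_{x+y}^{2}\Gamma\big|_{y=x+z}=\tfrac{1}{4}\nabla_x^{2}\bigl[\Gamma(t,x+z,x)\bigr]$, the bound \eqref{commute} for $j=2$ together with the collapsing estimate in $\dot{\N}_T$ gives
\[
\sup_z \|\nabla_x^{5/2+\epsilon}\Gamma(t,x+z,x)\|_{L^2([0,T_0]\times\mathbb R^3)} \lesssim 1 ,
\]
and since $5/2+\epsilon>3/2$ the Sobolev embedding $H^{5/2+\epsilon}(\mathbb R^3)\hookrightarrow L^\infty(\mathbb R^3)$ converts this to the required $L^2_t L^\infty_x$ bound uniformly in $z$.

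For the $\nabla_{x+y}^{j}$ estimates ($j\le 2$), I apply $\nabla_{x+y}^{j}$ to $\tilde{\S} s_a = -2v_N\Lambda$. Since $v_N(x-y)$ is annihilated by $\nabla_{x+y}$, one obtains $\tilde{\S}\bigl(\nabla_{x+y}^{j}s_a\bigr) = -2v_N\,\nabla_{x+y}^{j}\Lambda + \mathcal{R}_j$, where $\mathcal{R}_j$ is a commutator of the same form as $V_1$ but with $\Gamma$ replaced by $\nabla_{x+y}^{j'}\Gamma$ for $j'\le j$ acting on $\nabla_{x+y}^{j''}s_a$ with $j''<j$. The source term $-2v_N\,\nabla_{x+y}^{j}\Lambda$ is controlled by Lemma \ref{ellipest} applied with $F=v_N(x-y)\,\nabla_{x+y}^{j}\Lambda(t,x,y)$, using \eqref{crucial} (and its time-derivative analogue, guaranteed by the $\frac{\partial}{\partial t}$ part of \eqref{commute}) to bound $\|F\|_{L^2(d(x+y))L^1(d(x-y))}$ uniformly in $t$; the commutator $\mathcal{R}_j$ is absorbed by the same Gronwall argument as above, once the bounds on the lower-order $\nabla_{x+y}^{j''}s_a$ have been established inductively.
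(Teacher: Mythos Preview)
Your approach is essentially the same as the paper's. The paper writes $s_a=s_a^0+s_a^1$ with $\tilde{\S}s_a^1=-V(s_a^0)$ and invokes energy estimates, whereas you write $s_a=s_a^0+r$ with $\S r=-V_1 s_a$ and use Duhamel plus Gronwall; both arguments hinge on the same fact, namely that the potential part $V$ of $\tilde{\S}$ is a bounded operator on $L^2$ once one knows $\Gamma$ (and its $\nabla_{x+y}^{j}$ derivatives) are in $L^\infty$. The paper asserts this bound directly from \eqref{commute}, while you derive it via Sobolev on the collapsed function $x\mapsto\Gamma(t,x+z,x)$, obtaining an $L^2_t L^\infty_x$ control---which is actually what the collapsing part of $\dot\N_T$ delivers and is enough for Gronwall since $L^2([0,T_0])\subset L^1([0,T_0])$. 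One small point to tighten: the collapsing norms in $\dot\N_T$ are homogeneous ($|\nabla_x|^{1/2}$ and $|\nabla_x|^{1/2+\epsilon}$), so after bringing in $\nabla_{x+y}^2$ you land in $\dot H^{5/2+\epsilon}$, not $H^{5/2+\epsilon}$; to conclude $L^\infty(\mathbb R^3)$ you should pair this with the $\dot H^{1/2}$ bound coming from the $j=0$ case of \eqref{commute}, since $\dot H^{1/2}\cap\dot H^{5/2+\epsilon}\hookrightarrow L^\infty$ in three dimensions.
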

\begin{proof}
Let $V$ be the "potential" part of $\tilde S$:
\begin{align*}
 &V(u)=\left(\left(v_N* Tr \Gamma\right)(x)+ \left(v_N* Tr \Gamma\right)(y)\right) u
+\left(v_N\Gamma\right)\circ u+ u\circ \left(v_N\Gamma\right)
 \end{align*}
 Form the estimates \eqref{commute} for $\Gamma$ we see that $\bigg| \nabla_{x+y}^j   \Gamma\bigg| \lesssim 1$, thus $V$
 and $[V, \nabla_{x+y}^j ]$ have
  bounded operator norm (on $L^2$).
 Write $s_a=s_a^0+s_a^1$ ($s_a^0$ as in the previous lemma) so that $s_a^1$ satisfies the equation
 \begin{align*}
 &\tilde \S s_a^1= -V(s_a^0)\\
 &s_a^1(0, x, y)=0
 \end{align*}
Using energy estimates and the previous lemma we see
\begin{align*}
\|  \nabla_{x+y}^j s_a^1\|_{L^2} \lesssim 1
\end{align*}
 The result follows from the previous lemma.

\end{proof}
Finally, we can prove Theorem \eqref{s2thm}.
\begin{proof} (of theorem \eqref{s2thm}) Write $\sht:=s_2=s_a + s_e$ where $s_a$ satisfies \eqref{sa} and $\cht= \delta(x-y) + p_2$. In analogy with
(64a), (64b) of \cite{GM}, they satisfy
\begin{align*}
\tilde \S(s_e)= -(v_N \Lambda)\circ p_2 - \overline p_2 \circ (v_N \Lambda)\\
\tilde \W(\overline p_2)=-(v_N \Lambda)\circ \overline s_a + s_a \circ (v_N \Lambda)\\
-(v_N \Lambda)\circ \overline s_e + s_e \circ (v_N \Lambda)\\
:=M
-(v_N \Lambda)\circ \overline s_e + s_e \circ (v_N \Lambda)
\end{align*}
Using the result of Lemma \eqref{salemma}, as well as estimates \eqref{commute} for $\Lambda$ we see that
\begin{align*}
\| \nabla_{x+y}^j    M\|_{L^2(dxdy)} \lesssim 1  \, \, \, (0 \le t \le T_0)
\end{align*}
and the result follows by energy estimates.

Finally, to prove \eqref{linf} we will use the $L^2$ estimate
\eqref{l2}:
\begin{align*}
&\big\| \|\sht(x, z)\|_{L^2(dz)}\big\|_{L^{\infty}(dx)}=
\big\| \|\sht(x, x+z)\|_{L^2(dz)}\big\|_{L^{\infty}(dx)}\\
&\le
\big\| \|\sht(x, x+z)\|_{L^{\infty}(dx)}\big\|_{L^2(dz)} \le C \sum_{j=0}^2
\big\| \|\nabla_x^j\left(\sht(x, x+z)\right)\|_{L^{2}(dx)}\big\|_{L^2(dz)}\\
&=C \sum_{j=0}^2 \big\| \|\left(\nabla_{x+y}^j\sht\right)(x, x+z)\|_{L^{2}(dx)}\big\|_{L^2(dz)}\\
&=C\sum_{j=0}^2\|\left(\nabla_{x+y}^j\sht\right)(x, y)\|_{L^{2}(dx dy)} \le C
\end{align*}

\end{proof}

\section{The reduced Hamiltonian \label{reducedham}}
Recall  \eqref{reduced} for the definition of
the reduced Hamiltonian.  $\H_{red}$ was computed, for instance, in Section 5 of \cite{GM}.
We will write it in a different way, using Wick's theorem.
Recall the conjugation formulas
\begin{align}
e^{\B} a_x e^{-\B}& = \label{conjf}
\int \left( \ch (y, x) a_y + \sh (y, x) a^*_y \right) dy\\
&=a(\chb(x, \cdot)) + a^*(\sh(x, \cdot)):=b_{x} \notag\\
e^{\B} a^*_x e^{-\B}&=\int \left( \shb (y, x)a_y + \chb (y, x) a^*_y \right) \notag dy\\
&=a(\shb(x, \cdot))+a^*(\ch(x, \cdot)) \notag
:=b^{\ast}_{x}\ .
\end{align}
The reduced Hamiltonian is
\begin{subequations}
\begin{align}
 &\H_{red}
=N\mu_{0}(t) \label{zeroline}\\
&+N^{1/2}
\int dx\left\{
h(\phi(t,x))b^{\ast}_{x}
+\bar{h}(\phi(t,x))b_x \right\} \label{firstline'}\\
&+
\frac{1}{i}\frac{\partial}{\partial t} \left(e^{\B(k(t))}
\right)e^{-\B(k(t)}
+\int dx  b_x^* \Delta b_x \\
& -\frac{1}{2}
\int dx dy v_N(x-y)\left(\overline \phi(x)\overline \phi(y) b_x b_y
+ \phi(x)\phi(y)  b^*_x b^*_y + 2 \phi(x) \overline \phi(y) b_x^* b_y\right) \label{badquad} \\
&-\int dx( v_N*|\phi|^2) b_x^* b_x\\
 &- \frac{1}{\sqrt N}
\int dx_{1}dx_{2}\left\{
 v_N(x_{1}-x_{2}) \left(\overline{\phi}(x_{2}) b^{\ast}_{x_{1}} b_{x_{1}}b_{x_{2}} + \phi(x_{2})b_{x_{1}}^*b_{x_{2}}^*b_{x_{1}}\right)
\right\}\\
&-\frac{1}{2 N}
\int dx_{1}dx_{2}\left\{ v_N(x_{1}-x_{2}) b^*_{x_1} b^*_{x_2}  b_{x_1}b_{x_2} \right\}\ .
\label{thirdline}
\end{align}
\end{subequations}

The functions $\mu_{0}(t)$ and $h(\phi(t,x))$ appearing in \eqref{zeroline},\eqref{firstline'} are given below,
\begin{align*}
&\mu_{0} :=\int dx\left\{\frac{1}{2i}\big(\phi\bar{\phi}_{t}-\bar{\phi}\phi_{t}\big)-\big\vert\nabla\phi\big\vert^{2}
\right\}
-\frac{1}{2}\int dxdy\left\{v_{N}(x-y)\vert\phi(x)\vert^{2}\vert\phi(y)\vert^{2}\right\}
\\
&h(\phi(t,x)):=-\frac{1}{i}\partial_{t}\phi  +\Delta\phi -\big(v_{N}\ast\vert\phi\vert^{2}\big)\phi\ .
\end{align*}

We re-arrange the terms in $\H_{red}$ using Wick's theorem . Define the contraction of
$A(f):=a(\bar f_1) + a^*(f_2)$ and $A(g):=a(\bar g_1) + a^*(g_2)$ to be $C(A(f), A(g))=[a(\bar f_1), a^*(g_2)]=\int \bar f_1 g_2$, and define
the normal order $\Nor\big(A(f)A(g)A(h)A(k)\big)$ to be $A(f)A(g)A(h)A(k)$ expanded and re-arranged so all starred terms have been moved to the left, as if they commuted with unstarred terms.
Wick's theorem, which can easily be proved by induction, says, in particular, that
\begin{align*}
&A(g)A(h)A(k)= \Nor\big(A(g)A(h)A(k)\big)\\
&  + C(A(g), A(h))A(k)  + C(A(g), A(k))A(h) +  C(A(h), A(k))A(g)
\end{align*}
\begin{align*}
&A(f)A(g)A(h)A(k)\\
&= \Nor\bigg(A(f)A(g)A(h)A(k)\bigg)\\
& + C(A(f), A(g))A(h)A(k) + C(A(f), A(h))A(g)A(k) + \cdots \mbox{ (6 terms)}\\
&=\Nor\bigg(A(f)A(g)A(h)A(k)\\
& + C(A(f), A(g))A(h)A(k) + C(A(f), A(h))A(g)A(k) + \cdots\bigg)\\
& \mbox{ (6 ordered terms with 1 contraction)}\\
&+C(A(f), A(g))C(A(h), A(k)) + C(A(f), A(h))C(A(g), A(k)) + \cdots \\
& \mbox{ (6 ordered terms with 2 contractions)}
\bigg)
\end{align*}
Applying Wick's theorem to $\H_{red}$ (putting the quartic and cubic terms in normal order, but not the quadratics) we get
\begin{subequations}
\begin{align}
 &\H_{red} \notag
=N\mu_{0}(t)  \\
&+N^{1/2}
\int dx\left\{
\tilde h(\phi(t,x))b^{\ast}_{x} \notag
+\bar{\tilde h}(\phi(t,x))b_x \right\}  \\
&+
\frac{1}{i}\frac{\partial}{\partial t} \left(e^{\B(k(t))} \label{quad1}
\right)e^{-\B(k(t)}
+\int dx  b_x^* \Delta b_x \\
& -\frac{1}{2} \label{quad2}
\int dx dy v_N(x-y)\left(\overline\Lambda(x, y) b_x b_y
+ \Lambda(x, y)  b^*_x b^*_y + 2 \overline \Gamma(x, y) b_x^* b_y\right) \\
&-\int dx (v_N*Tr \Gamma) b_x^* b_x  \label{quad3}\\
 &- \frac{1}{\sqrt N}  \notag
\int dx_{1}dx_{2} \Nor \left\{
 v_N(x_{1}-x_{2}) \left(\overline{\phi}(x_{2}) b^{\ast}_{x_{1}} b_{x_{1}}b_{x_{2}} + \phi(x_{2})b_{x_{1}}^*b_{x_{2}}^*b_{x_{1}}\right)
\right\}\\
&-\frac{1}{2 N} \notag
\int dx_{1}dx_{2}\Nor \left\{ v_N(x_{1}-x_{2}) b^*_{x_1} b^*_{x_2}  b_{x_1}b_{x_2} \right\}\ .
\end{align}
\end{subequations}
where $\tilde h$ is the modified Hartree operator \eqref{pphieq}.
Let us remark that, in complete analogy to (67c) in \cite{GM}, the quadratic terms \eqref{quad1}+\eqref{quad2}+ \eqref{quad3} can be written concisely
and explicitly as
\begin{align*}
\eqref{quad1}+\eqref{quad2}+ \eqref{quad3}=\H_{\tilde G}-
\I\left(\begin{matrix}
\tilde w^{T}&\overline{\tilde f}
\\
-\tilde f&-\tilde w
\end{matrix}
\right)
\end{align*}
where
\begin{align*}
&\tilde f:=\big(\tilde \S(\sh)+\chb\circ(v_N \Lambda) \big)\circ\ch  -\big(\tilde \W(\chb)-\sh\circ \overline{(v_N \Lambda)}\big)\circ \sh
\\
&\tilde w:=\big(\tilde \W(\chb)-\sh\circ\overline{(v_N\Lambda)}\big)\circ \chb -\big(\tilde \S(\sh)+\chb\circ ( v_N \Lambda)\big)\circ\shb\\
&\H_{\tilde g} = \int \tilde g(x, y)a^*_x a_y dx dy\\
\end{align*}
Also, $\I$ is the Lie algebra isomorphism used  in our previous papers \cite{GMM1}-\cite{GMM}
(see for instance formula (27) in \cite{GM}) is
\begin{align*}
\I\left(\begin{matrix}
\tilde w^{T}&\overline{\tilde f}
\\
-\tilde f&-\tilde w
\end{matrix}
\right)
=-\frac{1}{2}\int dxdy\left\{
\tilde w(y,x)a_xa^*_y+\tilde w(x,y)a_x^*a_y - \tilde f(x,y)a_x^* a_y^*- \overline { \tilde f} (x,y)a_x a_y\right\}
\end{align*}
In this notation,  $\X_2$ is a multiple of $\tilde f $, thus $\tilde f=0$ if our equations are satisfied.

If we also put the quadratics in normal order,
the above formula becomes
\begin{align*}
 &\H_{red}
=X_0(t)+  \\
&  N^{1/2}
\int dx\left\{
\tilde h(\phi(t,x))b^{\ast}_{x}
+\bar{\tilde h}(\phi(t,x))b_x \right\}  \\
&+
\H_{\tilde g} + \Nor \Bigg(\I\left(\begin{matrix}
\tilde w^{T}&\overline{\tilde f}
\\
-\tilde f&-\tilde w
\end{matrix}
\right)
\\
 &- \frac{1}{\sqrt N}
\int dx_{1}dx_{2} \left\{
 v_N(x_{1}-x_{2}) \left(\overline{\phi}(x_{2}) b^{\ast}_{x_{1}} b_{x_{1}}b_{x_{2}} + \phi(x_{2})b_{x_{1}}^*b_{x_{2}}^*b_{x_{1}}\right)
\right\}\\
&-\frac{1}{2 N}
\int dx_{1}dx_{2} \left\{ v_N(x_{1}-x_{2}) b^*_{x_1} b^*_{x_2}  b_{x_1}b_{x_2} \right\}\Bigg) \ .
\end{align*}
where $X_0$ is written down explicitly in Section 6 on \cite{GMM}.
In conclusion, if $\phi$ and $k$ satisfy our equation $X_1=0, X_2=0$, then $\tilde h(\phi(t,x))=0$,  $\tilde f =0$ and
\begin{align}
\P&: \label{pform}
= H_{red}-\H -X_0\\
&=
\H_{\tilde g} -\H_{1}
+ \Nor \bigg(\I\left(\begin{matrix}
\tilde w^{T}&0
\\
0&-\tilde w \notag
\end{matrix}
\right) \bigg)\\
 +& \Nor \bigg(- \frac{1}{2 \sqrt N} \notag
\int dx_{1}dx_{2} \left\{
 v_N(x_{1}-x_{2}) \left(\overline{\phi}(x_{2}) b^{\ast}_{x_{1}} b_{x_{1}}b_{x_{2}} + \phi(x_{2})b_{x_{1}}^*b_{x_{2}}^*b_{x_{1}}\right)
\right\}\\
&-\frac{1}{2 N} \notag
\int dx_{1}dx_{2} \left\{ v_N(x_{1}-x_{2}) b^*_{x_1} b^*_{x_2}  b_{x_1}b_{x_2} \right\}\bigg)\\
&+\frac{1}{2 N}
\int dx_{1}dx_{2} \left\{ v_N(x_{1}-x_{2}) a^*_{x_1} a^*_{x_2}  a_{x_1}a_{x_2} \right\} \notag
 \ .
\end{align}
Remark that the third and fourth term are  $-\frac{1}{\sqrt N}\Nor \left(e^{\B} [\A, \V] e^{-\B}\right)$ and $-\frac{1}{ N}\Nor \left(e^{\B} \V e^{-\B}\right)$ (see  Section 5 of \cite{GM}).

\section{Estimates for the error term }\label{fockerror}
In this section we  apply the estimates of Corollary \eqref{shest} in order to estimate the error.

We proceed as in our previous papers \cite{GMM1}-\cite{GM}, using the identity
\begin{align*}
\big\Vert\psi_{exact}(t)-
e^{i \int X_0(t) dt}\psi_{appr}(t)\big\Vert_{\F}
 = \big\Vert e^{-i \int X_0(t) dt}\psi_{red} - \Omega\big\Vert_{\F}
 \end{align*}
and estimate the right hand side term using the equation
\begin{align*}
\left(\frac{1}{i}\frac{\partial}{\partial t} - \H_{red} + X_0\right)\left( e^{-i \int X_0(t) dt}\psi_{red} - \Omega\right)
=(0, 0, 0, X_3, X_4, 0, \cdots):=\tilde X
\end{align*}
Recall $X_0$ is the zeroth order term in $\H_{red}$.\\
Denote $E=e^{-i \int X_0(t) dt}\psi_{red} - \Omega$, so that
\begin{align}
&\left(\frac{1}{i}\frac{\partial}{\partial t} - \H_{red} + X_0\right)E \label{Edef}
=(0, 0, 0, X_3, X_4, 0, \cdots) := \tilde X\\
&E(0, \cdot)=0 \notag
\end{align}
In the proof that follows we will write
\begin{align*}
\S_F&:=\frac{1}{i}\frac{\partial}{\partial t} - \H_{red} + X_0\\
&:=\S_D -\P
\end{align*}
where
\begin{align*}
&\S_D=\frac{1}{i}\frac{\partial}{\partial t} - \H\\
&\H =  \int a^*_x\Delta a_x - \frac{1}{2N}\V
\end{align*}
Thus $\H$ is the original (unconjugated) Fock space Hamiltonian \eqref{FockHamilt1-a}-\eqref{FockHamilt1-c},
 and $\P$ accounts for the rest of the terms:
 \begin{align*}
 \P= H_{red}-\H -X_0
 \end{align*}
 Recall $\H$ acts on  Fock space as
 \begin{align*}
 \Delta -\frac{1}{N}\sum_{j<k}N^{3\beta}v\big(N^{\beta}(x_{j}-x_{k})\big)
 \end{align*}
 interpreted as $0$ on the zeroth slot and  $\Delta$ on the first one.

The terms $X_0$, $X_3$ and $X_4$ were computed in Section 5 of \cite{GM}, see formulas (74c) and (72c).
We only need $X_3$ and $X_4$ here, and they  are (modulo symmetrization and normalization)
\begin{align*}
X_3(y_1, y_2, y_3)=
&\frac{1}{\sqrt N} \int\chhb(y_{1},x_{1})\chh(x_{2},y_{2})v_{N}(x_{1}-x_{2})\phi(x_{2})\shh(y_{3},x_{1})dx_1dx_2\\
&= \frac{1}{\sqrt N}\left(v_{N}(y_{1}-y_{2})\shh(y_{3},y_{1})\phi(y_{2}) + LOT\right)\\
X_4(y_1, y_2, y_3, y_4)=
&\frac{1}{ N}\int\chhb(y_{1},x_{1})\chh(x_{2},y_{2})v_{N}(x_{1}-x_{2})\shh(y_{3},x_{1})\shh(x_{2},y_{4})dx_1dx_2\\
&=\frac{1}{ N}v_{N}(y_{1}-y_{2})\left(\shh(y_{3},y_{1})\shh(y_{2},y_{4}) +LOT\right)
\end{align*}
where the lower order terms $LOT$ come from the $p$ component of $\ch=\delta(x-y)+p$.

 We will use the following Strichartz norms $S$ and dual Strichartz norms $S'$
 for the equation $\S u=f$ where $u=u(t, x_1, \cdots x_n)$, $x_i \in \mathbb R^3$, $t \in [0, T_0]$.
 \begin{definition} Define the following norms for $0 \le t \le T_0$:
 \begin{align*}
 &\|u\|_S=\max\{ \|u\|_{L^{\infty}(dt) L^2(dx_1 \cdots dx_n)},
   \|u\|_{L^{2}(dt) L^6(d(x_1- x_2)) L^2(d(x_1+x_2) \cdots dx_n)},\\
 & \mbox{and all other permutaions}\}
 \end{align*}
 and
 \begin{align*}
 &\|u\|_{S'}=\min\{ \|u\|_{L^{1}(dt) L^2(dx_1 \cdots dx_n)},
  \|u\|_{L^{2}(dt) L^{6/5}(d(x_1- x_2)) L^2(d(x_1+x_2) \cdots dx_n)} ,\\
 & \mbox{and all other permutaions}\}
 \end{align*}
Also, if $X$ is an element of Fock space with finitely many non-zero components  $X_0, \cdots, X_k$, we denote
\begin{align*}
\|X\|_S
= \max\{|X_0|, \|X_1\|_{S}, \cdots, \|X_k\|_{S}\},
\end{align*}
and similarly
\begin{align*}
\|X\|_{S'}
= \max\{|X_0|, \|X_1\|_{S'}, \cdots, \|X_k\|_{S'}\}
\end{align*}

 \end{definition}
  As it is well known, using the $T-T^*$ argument and the Christ-Kiselev lemma, as well as estimates for the homogeneous
  equation such as \eqref{Xuwenest} if $\S u =f$, $u(t=0)=0$, then $\|u\|_{S} \lesssim \|f\|_{S'}$.

  Furthermore, if
  $\beta < 1$,
  \begin{align*}
  & \frac{1}{N}\| v_N(x_1-x_2) u\|_{S'} \le
  \frac{1}{N}\| v_N(x_1-x_2) u\|_{L^{2}(dt) L^{6/5}(d(x_1- x_2)) L^2(d(x_1+x_2) \cdots dx_n)}\\
  &\le \frac{1}{N}\| v_N\|_{L^{3/2}}  \|u\|_{L^{2}(dt) L^6(d(x_1- x_2)) L^2(d(x_1+x_2) \cdots dx_n)},\\
  &\le C N^{- power} \|u\|_{S}
  \end{align*}
  so we can treat the potential as a perturbation
  and conclude the following:
  \begin{lemma} \label{strichw}
  Let $f$ be a Fock vector with zero entries past the $k$th slot ($k=21$ in the application that follows). Assume
  \begin{align*}
 & \S_{D} u =f\\
  &u(t=0)=0
  \end{align*}
  then
  \begin{align*}
  \|u\|_S \lesssim \|f\|_{S'}
  \end{align*}
  and notice that $\sup_t\|u\|_{\F} \lesssim \|u\|_{S}$.
  \end{lemma}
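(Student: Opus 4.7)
\emph{Proof plan.} The plan is to treat the interaction $\frac{1}{N}\sum v_N(x_i-x_j)$ as a perturbation of the free Schr\"odinger operator slot by slot, using exactly the computation displayed just before the statement.

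First, I would write $\S_D=\S-V$, where $\S$ is the pure Schr\"odinger operator acting in all spatial variables on each slot and $V$ is multiplication by $\frac{1}{N}\sum_{1\le i<j\le n} v_N(x_i-x_j)$ on the $n$-th slot (zero on slots $0$ and $1$). The equation $\S_D u=f$ decouples across slots: on slot $n$ one has $\S u_n=f_n+Vu_n$ with $u_n(0,\cdot)=0$, and since $f$ has only finitely many nontrivial slots, so does $u$. By Duhamel's formula together with the standard Strichartz estimates for the free Schr\"odinger group on $\R^{3n}$ (obtained by the $T$--$T^{\ast}$ argument from the endpoint estimate used to prove \eqref{Xuwenest} and then upgraded to the inhomogeneous bound via the Christ--Kiselev lemma), one obtains
\begin{align*}
\|u_n\|_S\lesssim \|f_n\|_{S'}+\|V u_n\|_{S'}.
\end{align*}

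Second, I would bound the potential term exactly as in the inequality exhibited just above the lemma. For each pair $(i,j)$, select the version of the $S'$ norm adapted to that pair, namely $L^{2}(dt)L^{6/5}(d(x_i-x_j))L^{2}$ in the remaining variables, and apply H\"older in $d(x_i-x_j)$:
\begin{align*}
\frac{1}{N}\bigl\|v_N(x_i-x_j) u_n\bigr\|_{L^{2} L^{6/5}(d(x_i-x_j)) L^{2}}\le\frac{1}{N}\|v_N\|_{L^{3/2}}\,\|u_n\|_{L^{2} L^{6}(d(x_i-x_j)) L^{2}}.
\end{align*}
Scaling yields $\|v_N\|_{L^{3/2}}=N^{\beta}\|v\|_{L^{3/2}}$, so the prefactor is $N^{\beta-1}\|v\|_{L^{3/2}}$, which is a negative power of $N$ since $\beta<1$. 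The right factor is dominated by $\|u_n\|_S$, and summing over the $\binom{n}{2}\le\binom{k}{2}$ pairs gives $\|V u_n\|_{S'}\le C N^{\beta-1}\|u_n\|_S$, with $C$ depending only on $v$ and $k$.

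Third, I would absorb. Choose $N_0$ so large that $C N^{\beta-1}<1/2$ for $N\ge N_0$; combining with the previous display yields $\|u_n\|_S\le 2C\|f_n\|_{S'}$. Rigor is ensured by the usual continuity/bootstrap argument on subintervals of $[0,T_0]$, or equivalently by first setting up a contraction on the Duhamel map at small time and iterating (the constants are uniform and $T_0$ is finite). Taking the maximum over the finitely many nontrivial slots gives $\|u\|_S\lesssim \|f\|_{S'}$, and the final assertion $\sup_t\|u\|_{\F}\lesssim \|u\|_S$ is immediate from the $L^{\infty}(dt)L^{2}$ component of the $S$ norm. The only substantive obstacle is the perturbative absorption, which works precisely because $\beta<1$ leaves a negative power of $N$ in the $L^{3/2}$ pairing of $v_N$ against the Strichartz exponent $6$; this is much less demanding than the $X^{s}$ balance that forced $\beta<2/3$ in the nonlinear analysis of $\Lambda$.
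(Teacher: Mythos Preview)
Your proposal is correct and follows essentially the same route as the paper: the paper's argument is precisely the computation displayed just before the lemma, treating the potential as a perturbation via $\frac{1}{N}\|v_N\|_{L^{3/2}}\le CN^{\beta-1}$ and absorbing into the free Strichartz estimate obtained from $T$--$T^*$ and Christ--Kiselev. Your write-up is slightly more explicit about the slot-by-slot decoupling and the bootstrap/absorption step, but the idea is identical.
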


 The main result of this section, which completes the proof of Theorem \eqref{GM3thm}, is
 \begin{theorem} Let $\beta < 2/3$, and let $E$ satisfy \eqref{Edef}.
 Assume $\Lambda$, $\phi$ and $\Gamma$ satisfy the estimates of Theorem \eqref{mainNL} as well as Corollary \eqref{shest}.
 Then
 \begin{align*}
 \|E\|_{\F} \lesssim N^{-\frac{1}{6}}
 \end{align*}
  for $0 \le t \le T_0$.
 \end{theorem}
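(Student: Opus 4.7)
\medskip
\noindent\textbf{Proof proposal.}  The plan is to treat the equation \eqref{Edef} in Duhamel form, viewing the full reduced Fock evolution as a perturbation of the free diagonal one: write $\S_D E = \tilde X + \P E$ where $\S_D = \frac{1}{i}\partial_t - \H$ is the original Fock Hamiltonian and $\P = \H_{\rm red} - \H - X_0$ is given explicitly by \eqref{pform}. Since $E(0) = 0$, Lemma \eqref{strichw} yields the master inequality
\begin{align*}
\|E\|_S \;\lesssim\; \|\tilde X\|_{S'} + \|\P E\|_{S'}.
\end{align*}
The strategy is to show $\|\tilde X\|_{S'}\lesssim N^{-1/6}$ and $\|\P E\|_{S'}\lesssim (T_0^\alpha + N^{-\sigma})\|E\|_S$ for some $\alpha,\sigma>0$, so that taking $T_0$ small and $N$ large absorbs the second term into the left-hand side. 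Throughout the argument the bounds on $\sh$, $\ch$, $\Lambda$, $\Gamma$, $\phi$ supplied by Theorem \eqref{mainNL} and Corollary \eqref{shest} are used without comment.

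\medskip
\noindent\textbf{Source term $\tilde X$.}  The dominant piece of $\tilde X$ is $X_3 \sim \frac{1}{\sqrt N} v_N(y_1-y_2)\shh(y_3,y_1)\phi(y_2) + LOT$.  Measuring this in the slot $L^2(dt)\, L^{6/5}(d(y_1-y_2))\, L^2(d(y_1+y_2)\, dy_3)$ of the $S'$ norm and applying H\"older in $y_1-y_2$,
\begin{align*}
\|X_3\|_{S'} \;\lesssim\; \frac{1}{\sqrt N}\,\|v_N\|_{L^{6/5}}\,\sup_{y_1}\|\shh(y_3,y_1)\|_{L^2(dy_3)}\,\|\phi\|_{L^\infty_t L^2}.
\end{align*}
A direct scaling computation gives $\|v_N\|_{L^{6/5}}\sim N^{\beta/2}$, so the prefactor is $N^{(\beta-1)/2}\le N^{-1/6}$ for $\beta\le 2/3$. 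The remaining factors are $O(1)$ uniformly in $N$ by Corollary \eqref{shest} and the uniform estimate on $\phi$ from Theorem \eqref{mainNL}. The quartic term $X_4$ carries an extra $1/\sqrt N$ and is handled the same way, producing $N^{\beta/2-1}\le N^{-2/3}$, which is strictly smaller. The lower-order contributions with $p = \ch-\delta$ are absorbed by $\|p\|_{L^\infty L^2}\lesssim 1$.

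\medskip
\noindent\textbf{Perturbation $\P E$.}  Decompose $\P$ according to \eqref{pform} into three groups: the quadratic ``diagonal'' part $\H_{\tilde g} - \H_1 + \Nor\bigl(\I(\tilde w^T,0;0,-\tilde w)\bigr)$, the cubic part $-\frac{1}{\sqrt N}\Nor(e^{\B}[\A,\V]e^{-\B})$, and the quartic part $-\frac{1}{N}\Nor(e^{\B}\V e^{-\B}) + \frac{1}{N}\V$. The quadratic piece has coefficients built from $v_N\ast|\phi|^2$, $v_N\ast\mathrm{Tr}\,\Gamma$, $v_N\Gamma$ and $\tilde w$; all of these are bounded on $L^2$ uniformly in $N$ (no $1/N$ compensation is needed since there is no $v_N$ singularity on the diagonal after convolution or after the $\Gamma,\Lambda$ smoothing given by Theorems \eqref{mainNL}), so a straightforward $L^1_t$ estimate gives a factor $T_0^\alpha\|E\|_S$. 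For the cubic and quartic pieces, expand the $b_x^\sharp$ into $a(\chb(x,\cdot)), a^*(\sh(x,\cdot))$, etc., and estimate each resulting term in the $L^2_t L^{6/5}(d(x_1-x_2))L^2(\text{rest})$ slot of $S'$: the $v_N$ factor loses $N^{\beta/2}$, but this is compensated by the explicit $1/\sqrt N$ or $1/N$, while the $\sh,\ch$ kernels act as bounded convolution/composition operators on $L^2$ of the remaining variables by Corollary \eqref{shest}. The resulting bound is $(N^{-\sigma} + T_0^\alpha)\|E\|_S$. Finally the quartic piece produces, after placing the $-\frac{1}{N}\Nor(e^\B\V e^{-\B})$ and $+\frac{1}{N}\V$ in normal order, only terms with at least one $b^\sharp - a^\sharp$ mismatch, which again gives a $\sh$ or $p$ kernel and the same gain.

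\medskip
\noindent\textbf{Closing the estimate.}  Combining these bounds,
\begin{align*}
\|E\|_S \;\lesssim\; N^{-1/6} + \bigl(T_0^\alpha + N^{-\sigma}\bigr)\|E\|_S.
\end{align*}
Choosing $T_0$ so small and $N_0$ so large that $CT_0^\alpha + CN^{-\sigma} \le 1/2$ for $N\ge N_0$ lets us absorb the second term, yielding $\|E\|_S\lesssim N^{-1/6}$ and hence $\|E\|_{\F}\le \|E\|_S \lesssim N^{-1/6}$ for $0\le t\le T_0$. The main obstacle I anticipate is the bookkeeping for the cubic and quartic pieces of $\P E$: after conjugation each $b^\sharp$ unfolds into an integral against $\sh$ or $\ch$, and one has to verify that the resulting compositions respect the tensor product structure $L^{6/5}(d(x_i-x_j))\otimes L^2(\text{other variables})$ that drives the $T T^*$ Strichartz bound, so that the $\frac{1}{\sqrt N}\|v_N\|_{L^{6/5}} \lesssim N^{-1/6}$ gain survives intact. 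The uniform-in-$N$ bounds on $\|\sh(x,\cdot)\|_{L^2}$ and $\|p(x,\cdot)\|_{L^2}$ from Corollary \eqref{shest} are exactly what is required to keep those compositions under control.
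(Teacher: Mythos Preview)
Your treatment of the source term $\tilde X$ is correct and matches the paper. The gap is in the handling of $\P E$, and it is structural rather than computational.

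The Strichartz lemma \eqref{strichw}, as well as every operator bound on $\P$ in Section \eqref{21}, is stated only for Fock vectors with zero entries past the $k$th slot ($k=21$), and the constants depend on $k$. This is unavoidable: on the $n$th slot the term $\frac{1}{N}\V$ alone contains $\binom{n}{2}$ pairwise potentials, so neither the perturbative Strichartz argument for $\S_D$ nor the ``boundedness'' of $\P_2$ holds uniformly in $n$. But $E$ is the full Fock-space error and is not a priori supported on finitely many slots. Hence your master inequality $\|E\|_S \lesssim \|\tilde X\|_{S'} + \|\P E\|_{S'}$ is not available: $\|E\|_S$ is not even defined (the $S$ norm is a maximum over finitely many components), Lemma \eqref{strichw} cannot be applied with forcing $\P E$, and the bootstrap cannot close.

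The paper circumvents this by replacing the direct bootstrap with a finite iteration. Setting $\S_D E_1 = \tilde X$ and forming
\[
E_1 + \S_D^{-1}\P_1 E_1 + (\S_D^{-1}\P_1)^2 E_1 + (\S_D^{-1}\P_1)^3 E_1,
\]
one stays inside the first $21$ slots throughout, so your $S'\to S$ estimates (which are correct there) apply and give $\|(\S_D^{-1}\P_1)^j E_1\|_S \lesssim N^{-(j+1)/6}$. The equation $(\S_D - \P)$ applied to this fourth iterate produces a right-hand side whose $\F$-norm is $\lesssim N^{-1/6}$ pointwise in time: the $\P_2$ piece is harmless since the iterate is $O(N^{-1/6})$ in $\F$, and for the residual $\P(\S_D^{-1}\P_1)^3 E_1$ the three gains of $N^{-1/6}$ from $\P_1$ plus the initial $N^{-1/6}$ beat the single $N^{1/2}$ loss from the full $\P$. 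One then compares the iterate with the exact $E$ by an energy estimate for $\S_F = \frac{1}{i}\partial_t - (\H_{red}-X_0)$, which \emph{does} hold on all of Fock space because $\H_{red}-X_0$ is self-adjoint. The finite iteration is the missing idea.
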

 \begin{proof}
 Recall the splitting
\begin{align*}
\S_F&:=\frac{1}{i}\frac{\partial}{\partial t} - \H_{red} + X_0\\
&:=\S_D -\P
\end{align*}
where $\S_D$ is a diagonal Schr\"odinger operator.
 Also, $\P$ was computed at the end of Section \eqref{reducedham}.

 We observe $\|X_4\|_{L^2}= c N^{\frac{3 \beta}{2}-1} <<1$, but $\|X_3\|_{L^2}= c N^{\frac{3 \beta -1}{2}} >>1$
 so we cannot use energy estimates and are forced to use the Strichartz estimates of Lemma \eqref{strichw}.
 We proceed to solve
\begin{align}
& (S_D - \P) E = \X =(0, 0, 0, X_3, X_4, \cdots) \label{fockspaceest}\\
 & E(0)=0 \notag
\end{align}
by iteration. We will iterate 4 times, and by the end the vector on the right hand side will have at most
$5 + 4 \times 4$ non-zero entries. ($\P$ is a fourth order operator in $a$ and $a^*$).

  Since
 $\|\frac{1}{\sqrt N} v_N \|_{L^{6/5}}=c N^{\frac{\beta -1}{2}} \le c N^{-\frac{1}{6}}$,
 we have $\|X\|_{S'} \lesssim N^{-\frac{1}{6}}$ and therefore the first iterate defined by
 \begin{align*}
& S_D  E_1 = \X =(0, 0, 0, X_3, X_4, \cdots) \\
 & E_1(0)=0
\end{align*}
satisfies
$\|E_1\|_{S} \lesssim N^{-\frac{1}{6}}$. If $\P$ were bounded on the first few slots of Fock space we would be done. Unfortunately, this is not the case,  the norm of $\P$ on the first few slots of Fock space is $\le C N^{\frac{3 \beta -1}{2}}\le C N^{\frac{1}{2}}$.

In Section \eqref{21} below we prove that $\P=\P_1+\P_2$ where $\P_2$ is bounded on (the first 21 slots of) Fock space, while
$\|\P_1 E \|_{\F} \lesssim N^{\frac{1}{2}}\|E \|_{\F}$ but also
$\|\P_1 E \|_{S} \lesssim N^{-\frac{1}{6}}\|E \|_{S'}$. The reasons behind these bounds are
$\|\frac{1}{\sqrt N} v_N \|_{L^{6/5}}=c N^{\frac{\beta -1}{2}} \le c N^{-\frac{1}{6}}$,
$\|\frac{1}{\sqrt N} v_N \|_{L^{2}}=c N^{\frac{3\beta }{2}-1} \le c N^{\frac{1}{2}}$.
Consider the iteration
 \begin{align*}
&\left( S_D - \P \right)\left(  E_1 + \S_D^{-1} \P_1  E_1 + \left(\S_D^{-1}\P_1 \right)^2E_1+ \left(\S_D^{-1}\P_1 \right)^3 E_1\right)\\
& = \X
-\P_2\left(  E_1 + \S_D^{-1} \P_1  E_1 +  \left(\S_D^{-1}\P_1 \right)^2E_1\right) - \P \left(\S_D^{-1}\P_1 \right)^3 E_1
 \end{align*}
Assuming the estimates of Propositions \eqref{p1terms}, \eqref{p2terms}, \eqref{p22terms},
we have the following fixed time estimates for $0 \le t \le T_0$:
 \begin{align*}
 &\|\P_2\left(  E_1 + \S_D^{-1} \P_1  E_1 +  \left(\S_D^{-1}\P_1 \right)^2E_1\right)\|_{\F}\\
 &\lesssim \|\left(  E_1 + \S_D^{-1} \P_1  E_1 +  \left(\S_D^{-1}\P_1 \right)^2E_1\right)\|_{\F}\\
 &\lesssim N^{-\frac{1}{6}}
 \end{align*}
 and
 \begin{align*}
 \|\P \left(\S_D^{-1}\P_1 \right)^3 E_1\|_{\F}
 \lesssim N^{\frac{1}{2}}  N^{-\frac{4}{6}} \lesssim N^{-\frac{1}{6}}
 \end{align*}
 Now we can use energy estimates to compare the above fourth iterate with the exact solution to \eqref{Edef}
 as in our previous papers \cite{GMM1}-\cite{GM}, and conclude
 the proof of the theorem.
 \end{proof}

All we have to do now is estimate the norm $\P$, restricted to the first 21 slots of Fock space.
In order to do that, we need some precise information on the terms in $\P$, which was developed in Section
\eqref{reducedham}.

\section{Estimates for the norm of $\P$ restricted to the first 21 slots of Fock space} \label{21}

In this section we will use repeatedly the fact that, if $\beta \le 2/3$,   $\frac{1}{N}\| v_N\|_{L^{2}} \lesssim 1$,
 $\frac{1}{\sqrt N}\| v_N\|_{L^{2}} \lesssim N^{ \frac{1}{2} }$,
 $\frac{1}{\sqrt N}\| v_N\|_{L^{\frac{6}{5}}} \lesssim N^{- \frac{1}{6} }$.
Recall formula \eqref{pform} defining $\P$..

We start by defining the term $\P_1$ and proving its properties.
\begin{proposition} \label{p1terms}
Let $\P_1$ is a linear combination of the following terms coming from
\begin{align*}
\frac{1}{\sqrt N}\Nor \left(e^{\B}[\A, \V] e^{-\B}\right)
\end{align*}
\begin{align*}
&T_1=\frac{1}{\sqrt N}
\int dx_{1}dx_{2}
 v_N(x_{1}-x_{2}) \overline{\phi}(x_{2})a(\shb(x_1, \cdot)) a(\chb(x_1, \cdot))a(\chb(x_2, \cdot))\\
&T_2=\frac{1}{\sqrt N}
\int dx_{1}dx_{2}
 v_N(x_{1}-x_{2}) \phi(x_{2})a^*(\sh(x_1, \cdot)) a^*(\ch(x_1, \cdot))a^*(\ch(x_2, \cdot))\\
&T_3=\frac{1}{\sqrt N}
\int dx_{1}dx_{2}
 v_N(x_{1}-x_{2}) \overline{\phi}(x_{2}) a^{\ast}(\ch(x_1, \cdot)) a(\chb(x_1, \cdot))a(\chb(x_2, \cdot)) \\
 &T_4=\frac{1}{\sqrt N}
\int dx_{1}dx_{2}
 v_N(x_{1}-x_{2})  \phi(x_{2})a^*(\ch(x_1, \cdot))a^*(\ch(x_2, \cdot))a((\chb(x_1, \cdot))
\end{align*}

If $X$ is a Fock space vector which has non-zero entries only in the first $k$ ($k=21$) slots and $T$ is one of the above $T_i$,
then
\begin{align}
&\|T X \|_{\F} \lesssim N^{\frac{1}{2}}\|X\|_{\F} \label{ffockest1}\\
&\|T X \|_{S'} \lesssim N^{-\frac{1}{6}}\|X\|_{S} \label{ssest1}
\end{align}

 \end{proposition}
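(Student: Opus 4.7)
My plan is to treat the four operators $T_1,\ldots,T_4$ in a unified way by expanding $\ch=\delta+p$ inside each factor $a(\chb(x_i,\cdot))$ or $a^*(\ch(x_i,\cdot))$, which splits every $T_i$ into a sum of at most four sub-operators built from (i) point operators $a_{x_i}$ or $a_{x_i}^*$ (coming from the $\delta$ part), and (ii) smeared operators $a(\overline p(x_i,\cdot))$, $a^*(p(x_i,\cdot))$, $a(\shb(x_i,\cdot))$, $a^*(\sh(x_i,\cdot))$. After this expansion, each sub-operator has the schematic form
\begin{equation*}
T = \frac{1}{\sqrt N}\int v_N(x_1-x_2)\,\phi^{\#}(x_i)\,B_1(x_1,x_2)B_2(x_1,x_2)B_3(x_1,x_2)\,dx_1\,dx_2,
\end{equation*}
where each $B_j$ is one of the factors listed above. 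Since $T$ changes the particle number by exactly $\pm 3$ and $X$ lives in the first $21$ slots, the number operator is controlled by $\sqrt{\N+1}\lesssim 1$ on everything in sight.

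For the Fock space estimate \eqref{ffockest1}, I would use the textbook bounds $\|a(f)\psi\|\lesssim\|f\|_{L^2}\|\N^{1/2}\psi\|$, $\|a^*(f)\psi\|\lesssim\|f\|_{L^2}\|(\N+1)^{1/2}\psi\|$, and $\int\|a_x\psi\|^2\,dx=\|\N^{1/2}\psi\|^2$, to absorb each kernel factor against one factor of $\sqrt{\N+1}$. Pulling everything into $L^2$, the Fock norm of $TX$ is dominated by
\begin{equation*}
\tfrac{1}{\sqrt N}\,\|v_N(x_1-x_2)\phi^{\#}(x_i)\|_{L^2(dx_1\,dx_2)}\,\cdot\, \sup_{x}\|\shb(x,\cdot)\|_{L^2}\,\cdot\,\bigl(1+\sup_x\|p(x,\cdot)\|_{L^2}\bigr)^2\,\|X\|_{\F}.
\end{equation*}
The middle and last factors are $O(1)$ by Corollary \ref{shest}, while $\tfrac{1}{\sqrt N}\|v_N\|_{L^2}\|\phi\|_{L^2}\lesssim N^{3\beta/2-1/2}\lesssim N^{1/2}$ for $\beta\le 2/3$, giving \eqref{ffockest1}.

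For the dual Strichartz bound \eqref{ssest1}, I would exploit the freedom in choosing a pair of variables for the $L^2_tL^{6/5}L^2$ norm. The output Fock vector $TX$ has $(n\pm 3)$ variables; two of them, call them $\zeta_1,\zeta_2$, are precisely the ones onto which the point operators $a_{x_1},a_{x_2}$ (or $a_{x_1}^*,a_{x_2}^*$) are mapped, so that $v_N(x_1-x_2)$ becomes $v_N(\zeta_1-\zeta_2)$ in the output kernel. Measuring the output in $L^2(dt)L^{6/5}(d(\zeta_1-\zeta_2))L^2(\text{rest})$ and applying Minkowski (legal since $6/5\le 2$) followed by Hölder in $d(\zeta_1-\zeta_2)$, I can separate $v_N$ in $L^{6/5}$ from the remaining factors measured in $L^\infty$ in that one variable. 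The contribution $\tfrac{1}{\sqrt N}\|v_N\|_{L^{6/5}}\lesssim N^{\beta/2-1/2}\lesssim N^{-1/6}$ provides the stated decay, while the leftover factors are bounded using $\|\phi\|_{L^\infty_tL^\infty_x}$-type control (from $\N_T(\phi)$ via Sobolev), $\sup_x\|\sh(x,\cdot)\|_{L^2}$, $\sup_x\|p(x,\cdot)\|_{L^2}$, and finally the slots of $X$ in $\|X\|_S$.

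The main technical obstacle, and where bookkeeping must be done with care, is making sure that in the sub-term with \emph{no} point operators at all (i.e.\ where all $\ch$'s are replaced by $p$), the variables $x_1,x_2$ are integrated out before the output is re-assembled, so that the $v_N(x_1-x_2)$ factor no longer appears directly in the output variables. For that sub-term I would instead use the Cauchy–Schwarz estimate $\|v_N(x_1-x_2)\phi(x_2)\|_{L^2(dx_1dx_2)}\lesssim N^{3\beta/2}$ in a preliminary step and then pay the corresponding power of $N$ by selecting the $L^1_tL^2_x$ arm of the $S'$ norm, which still yields $N^{-1/6}$ after combining with the $\tfrac{1}{\sqrt N}$ prefactor and the short time interval $[0,T_0]$. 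All other combinations of point/smeared operators are strictly easier, so once the two extreme cases are handled, the rest is assembled by the same pattern.
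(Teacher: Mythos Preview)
Your overall strategy matches the paper's: expand $\ch=\delta+p$, treat the $\delta$--terms as the main contribution, and exploit $\tfrac{1}{\sqrt N}\|v_N\|_{L^2}\lesssim N^{1/2}$ for the Fock bound versus $\tfrac{1}{\sqrt N}\|v_N\|_{L^{6/5}}\lesssim N^{-1/6}$ for the $S'$ bound. Two remarks, one minor and one substantive.

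\emph{Minor.} Your description of the $S'$ mechanism (``$v_N(x_1-x_2)$ becomes $v_N(\zeta_1-\zeta_2)$ in the output kernel'') fits $T_2$ (three creations) and $T_4$, but not $T_1$ or $T_3$: for $T_1$ the point operators are annihilations, so $x_1,x_2$ are \emph{input} variables of $F$, and one uses the $L^2_tL^6L^2$ branch of $\|F\|_S$ against the output in $L^1_tL^2$. This is exactly the duality the paper invokes (``The estimate for $T_2$ is the dual of this argument''), so the gap is only in the wording.

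\emph{Substantive.} Your treatment of the sub-term with \emph{no} point operators is wrong. You propose the Cauchy--Schwarz bound $\|v_N(x_1-x_2)\phi(x_2)\|_{L^2}\lesssim N^{3\beta/2}$ and then claim that, together with the prefactor $N^{-1/2}$ and the short time interval, this yields $N^{-1/6}$. But $\tfrac{1}{\sqrt N}N^{3\beta/2}=N^{(3\beta-1)/2}$, which for $\beta\in(1/3,2/3)$ lies between $1$ and $N^{1/2}$; the fixed time interval $[0,T_0]$ contributes no negative power of $N$. So the argument as written fails precisely in the regime of interest.

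The correct observation (which the paper states tersely as ``composition with $p$ is bounded on $L^2$'') is that every sub-term carrying at least one factor of $p$ is in fact $O(N^{-1/2})$ in operator norm, hence trivially $\lesssim N^{-1/6}$. The key is to use $\|v_N\|_{L^1}=\|v\|_{L^1}=O(1)$ rather than $\|v_N\|_{L^2}$. For instance, in the all-$p$ piece of $T_1$ one first integrates out $x_2$ via Young's inequality to form
\[
Q(x_1,z_3)=\int v_N(x_1-x_2)\overline\phi(x_2)\overline p(x_2,z_3)\,dx_2,
\qquad
\|Q(\cdot,z_3)\|_{L^1(dx_1)}\le \|v_N\|_{L^1}\|\phi\|_{L^2}\|p(\cdot,z_3)\|_{L^2},
\]
so that $\|Q\|_{L^2(dz_3)L^1(dx_1)}\lesssim 1$. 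Combining with $\sup_{x_1}\|\shb(x_1,\cdot)\|_{L^2}$ and $\sup_{x_1}\|p(x_1,\cdot)\|_{L^2}$ from Corollary \ref{shest} gives $\|T_1^{pp}F\|_{L^2}\lesssim N^{-1/2}\|F\|_{L^2}$. With this fix in place, your proof goes through along the same lines as the paper's.
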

 \begin{remark} It is easy to see the meaning of these terms.
   $T_1^*=T_2$,   $T_2 \Omega =X_3$ while $T_3$ and $T_4$ with $\ch$ replaced by $\delta(x-y)$ correspond
 to the unconjugated  $\frac{1}{\sqrt N} [\A, \V]$.
 \end{remark}
\begin{proof}
In treating the above terms recall $\ch(t, x, y)=\delta(x-y) + p(t, x, y)$. The worst terms are always obtained from
the $\delta$ term (because composition with $p$ is bounded on $L^2$), so we will only discuss these. Also, recall $\phi$ is known to be bounded, and $L^2$. Replacing  $ \ch(t, x, y)$ by $\delta(x-y)$, $T_1$ gets replaced by
\begin{align*}
\frac{1}{\sqrt N}
\int dx_{1}dx_{2}
 v_N(x_{1}-x_{2}) \overline{\phi}(x_{2})a(\shb(x_1, \cdot)) a_{x_1}a_{x_2}
 \end{align*}
This acts on a Fock space vector of the form $(0, \cdots, F(x_1, \cdots x_n, 0, \cdots)$
 as
 \begin{align*}
 \int
 \frac{1}{\sqrt N}v_N(x_{1}-x_{2}) \overline{\phi}(x_{2})\shb(x_1, z))F(x_1, x_2, z, \cdots) dx_1 dx_2 dz
 \end{align*}
 Now we use
 \begin{align*}
 &\|\frac{1}{\sqrt N}v_N(x_{1}-x_{2}) \overline{\phi}(x_{2})\shb(x_1, z))\|_{L^2(dx_1 dx_2 dz)}\\
& \le \sup_{x_1}\|\shb(x_1, z)\|_{L^2(dz)}\| \|\frac{1}{\sqrt N}v_N\|_{L^2}\|\phi\|_{L^2} \lesssim N^{\frac {1}{2}}
 \end{align*}
which implies \eqref{ffockest1}, and also
  \begin{align*}
 &\|\frac{1}{\sqrt N}v_N(x_{1}-x_{2}) \overline{\phi}(x_{2})\shb(x_1, z))\|_{L^{6/5}(d(x_1-x_2))L^2((dx_1 +x_2) dz)}\\
& \le \sup_{x_1}\|\shb(x_1, z)\|_{L^2(dz)}\| \|\frac{1}{\sqrt N}v_N\|_{L^{6/5}}\|\phi\|_{L^2} \lesssim N^{-\frac {1}{6}}
 \end{align*}
 which implies the fixed time estimate
 \begin{align*}
 \|T_1(F)(t)\|_{L^2} \lesssim N^{-\frac {1}{6}} \|F\|_{L^6(d(x_1-x_2))L^2(d(x_1+x_2) dx_3 \cdots)}
 \end{align*}
 Now take $L^1$ in time, and dominate that by $L^2$ in time on the right hand side, since $t \in [0, 1]$.
 This proves
 \begin{align*}
 \|T_1(F)(t)\|_{S'} \lesssim N^{-\frac {1}{6}} \|F\|_{S}
 \end{align*}
 The estimate for $T_2$ is the dual of this argument.
The term $T_3$ is (after replacing $\ch$ by $\delta$ )
\begin{align*}
& \frac{1}{\sqrt N}
\int dx_{1}dx_{2}
 v_N(x_{1}-x_{2}) \overline{\phi}(x_{2}) a^{\ast}_{x_{1}} a_{x_{1}}a_{x_{2}}
 \end{align*}
 This acts on $F$ by
 \begin{align*}
 & \frac{1}{\sqrt N}
\int
 v_N(x_{1}-x_{2}) \overline{\phi}(x_{2}) F(x_1, x_2, \cdots) dx_2
 \end{align*}
 The variables $x_j$, $j \ge 3$ are passive, so, without loss of generality,  we  take $F=F(x_1, x_2)$
 The $L^2$ bound is immediate, and for the $S$, $S'$ bound write $F(x_1, x_2)=G(x_1-x_2, x_1+x_2)$ and
 \begin{align*}
 & \frac{1}{\sqrt N}\|
\int
 v_N(x_{1}-x_{2}) \overline{\phi}(x_{2}) G(x_1-x_2, x_1+ x_2) dx_2\|_{L^2(dx_1)}\\
 &= \frac{1}{\sqrt N}\|
\int
 v_N(x_{2}) \overline{\phi}(x_1-x_{2}) G(x_2, 2 x_1- x_2) dx_2\|_{L^2(dx_1)}\\
 &\le \frac{1}{\sqrt N} \|\phi\|_{L^{\infty}} \|v_N\|_{L^{6/5}}\|G\|_{L^6 L^2}
 = c N^{-\frac{1}{6}} \|\phi\|_{L^{\infty}} \|F\|_{L^6(d(x-y)) L^2(d(x+y))}
 \end{align*}
 The bounds for $T_4$ are easy, and left to the reader.
\end{proof}

\begin{proposition} \label{p2terms}
The terms of $\P$ other than $T_1, \cdots T_4$  coming from
\begin{align*}
\frac{1}{\sqrt N}\Nor \left(e^{\B}[\A, \V] e^{-\B}\right)
\end{align*}
have bounded operator norm on the first 21 slots of Fock space.
\end{proposition}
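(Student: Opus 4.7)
The plan is to enumerate every term in $\frac{1}{\sqrt N}\Nor(e^{\B}[\A,\V]e^{-\B})$ and show that each of the twelve pieces other than $T_1,\ldots,T_4$ defines a bounded operator on $\oplus_{n\le 21}\F_n$, with bound uniform in $N$. First I would compute $[\A,\V]$ directly: a short commutator calculation yields
\begin{equation*}
[\A,\V]=\int v_N(y-z)\,\overline\phi(z)\, a^*_y a_y a_z\,dy\,dz +\mathrm{h.c.}
\end{equation*}
Conjugating each $a_\cdot$ by $e^{\B}$ via $b_x=a(\chb(x,\cdot))+a^*(\sh(x,\cdot))$ and $b^*_x=a(\shb(x,\cdot))+a^*(\ch(x,\cdot))$, I expand the triple product $b^*_y b_y b_z$ into a sum of $2^3=8$ monomials and apply $\Nor$ to move each into normal-ordered form (creations to the left). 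This produces $8$ normal-ordered cubic expressions for the first piece and another $8$ for its Hermitian conjugate, $16$ in total; because we take $\Nor$, no contraction terms appear.

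The four expansions matching $T_1,\ldots,T_4$ are precisely those in which every factor is drawn from the "ch family" and no new $\sh$ kernel enters through the $a^*(\sh(\cdot,\cdot))$ branch of some $b_\cdot$ (and dually for $b^*_z b^*_y b_y$). Every one of the twelve remaining normal-ordered cubics therefore contains at least one factor $a^*(\sh(y_\star,\cdot))$ with $y_\star\in\{y,z\}$ coming from the ``sh alternative'' of some $b$ or $b^*$. When this factor is applied to a state $\psi_n$, it creates a particle at a free variable $y_j$ with weight $\sh(y_\star,y_j)$, which is the crucial smoothing.

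For each such term I would bound the operator norm by pairing the $\sh$ factor with its free variable using Corollary \eqref{shest}: $\sup_{y_\star}\|\sh(y_\star,\cdot)\|_{L^2}\lesssim 1$. After absorbing that free variable in $L^2$, the remaining $\shb(y,\cdot)$ or $p(y,\cdot)$ kernel in the product is absorbed against another free variable of $\psi_n$ by the analogous estimate $\sup_y\|\shb(y,\cdot)\|_{L^2}\lesssim 1$. What is left is a scalar integral of the shape
\begin{equation*}
\frac{C\sqrt n}{\sqrt N}\int v_N(y-z)\,|\phi(z)|\,G(y)\,dy\,dz,
\end{equation*}
where $\|G\|_{L^2}\lesssim\|\psi_n\|_{L^2}$. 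Performing the $y$-integration first gives $\|v_N\|_{L^1}=\|v\|_{L^1}$ (uniform in $N$), and the remaining $z$-integral is $\le\|\phi\|_{L^2}\|G\|_{L^2}$ by Cauchy--Schwarz, where $\|\phi\|_{L^\infty},\|\phi\|_{L^2}\lesssim 1$ by Theorem \eqref{mainNL} together with $\phi\in H^{5/2+\epsilon_0}\subset L^\infty(\mathbb R^3)$. Combined with the $\frac{1}{\sqrt N}$ prefactor, each such term has operator norm $\lesssim N^{-1/2}$ on the first $21$ slots, which is much stronger than the bounded claim required.

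The main obstacle is combinatorial bookkeeping: verifying that in each of the twelve remaining normal-ordered cubic terms a $\sh$ or $\shb$ kernel is indeed available to be paired with a free particle slot before the singular $v_N$ convolution is handled by Young/Cauchy--Schwarz. The decomposition $\ch=\delta+p$ splits each $a^{(*)}(\chb)$ or $a^{(*)}(\ch)$ into an unregularized delta piece and a smooth $p$ piece, but since the essential smoothing $\sh$ (or $\shb$) is present independently of that decomposition, no additional dangerous terms arise. No single estimate is deep, but the case enumeration is finite and must be exhaustive.
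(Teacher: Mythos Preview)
Your overall approach—enumerate the sixteen normal-ordered cubics, set aside $T_1,\dots,T_4$, and bound the rest using $\|v_N\|_{L^1}$ together with the $\sh/\shb$ bounds of Corollary~\eqref{shest}—is exactly what the paper does. However, two of your steps are not correct as written.

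First, your identification of $T_1,\dots,T_4$ as ``every factor from the $\ch$ family'' is wrong. In the expansion of $\overline\phi(x_2)\,b^*_{x_1}b_{x_1}b_{x_2}$, the four terms $T_1,T_3$ are precisely those in which both $b_{x_1}$ and $b_{x_2}$ take the $\ch$-branch, \emph{regardless} of what $b^*_{x_1}$ does. In particular $T_1=a(\shb(x_1,\cdot))a(\chb(x_1,\cdot))a(\chb(x_2,\cdot))$ already contains a $\shb$ factor from the $\sh$-branch of $b^*_{x_1}$—yet it is still one of the bad terms. So the mere presence of an $\sh/\shb$ kernel is not the right criterion; what matters is that at least one of the two $b$-factors (not $b^*$) picks the $a^*(\sh)$ branch, so that one of the integration variables $x_1,x_2$ in $v_N(x_1-x_2)$ does \emph{not} become a bare argument of $F$. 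Your parenthetical ``no new $\sh$ kernel enters through the $a^*(\sh(\cdot,\cdot))$ branch of some $b$'' is the correct criterion, but it contradicts the clause preceding it.

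Second, and more seriously, your uniform template (absorb $\sh$ in a free output variable, absorb a second $\shb/p$ against an argument of $\psi_n$, then reduce to the scalar integral $\frac{C}{\sqrt N}\int v_N(y-z)|\phi(z)|G(y)\,dy\,dz$ with $\|\phi\|_{L^2}$) does not cover all twelve terms. Take for instance the paper's term \eqref{t6}, which after $\ch\to\delta$ becomes $a^*_{x_1}a^*(\sh(x_2,\cdot))a_{x_1}$: there is only \emph{one} $\sh$-kernel, and the other two factors are bare $a^*_{x_1},a_{x_1}$. There is no ``remaining $\shb$ or $p$ kernel'' to absorb. The estimate for this term goes instead by noting that $x_2$ appears only in $\overline\phi(x_2)\sh(x_2,y_j)$, so one bounds $\big\|\int v_N(y_1-x_2)\overline\phi(x_2)\sh(x_2,\cdot)\,dx_2\big\|_{L^2}\le\|v_N\|_{L^1}\|\phi\|_{L^\infty}\sup_{x_2}\|\sh(x_2,\cdot)\|_{L^2}$ uniformly in $y_1$—this needs $\|\phi\|_{L^\infty}$, not $\|\phi\|_{L^2}$. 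Similarly, for the term $a^*_{x_1}a^*(\sh(x_1,\cdot))a_{x_2}$ one must recognise that $x_1$ has become an \emph{output} variable, so $v_N(x_1-x_2)$ acts by convolution and Young's inequality with $\|v_N\|_{L^1}\|\phi\|_{L^\infty}$ applies. The paper handles these cases individually; your single template does not.
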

\begin{proof}
To prove the proposition,
we have to estimate the terms in
 \begin{align*}
\frac{1}{\sqrt N} \Nor \left(
\int dx_{1}dx_{2}\left\{
 v_N(x_{1}-x_{2}) \left(\overline{\phi}(x_{2}) b^{\ast}_{x_{1}} b_{x_{1}}b_{x_{2}} + \phi(x_{2})b_{x_{1}}^*b_{x_{2}}^*b_{x_{1}}\right)
\right\}\right)
\end{align*}

The two terms are dual to each other, so will just estimate the first one.
In principle, there are $2^3$ terms to estimate, following the pattern
\begin{subequations}
\begin{align}
a_1a_1a_2 \label{t1}\\
a^*_1a_1a_2  \label{t2}\\
a_1 a_1^*a_2 \label{t3}\\
\cdots  \notag \\
a_1^*a_1a_2^* \label{t6}\\
\cdots
\end{align}
\end{subequations}
The two terms, \eqref{t1} and \eqref{t2} are $T_1$ and $T_3$ ,  which have been discussed in the previous lemma.
During the proof, we will comment on where some of the contraction terms go, but point out we do not need to estimate them.

The term \eqref{t3} stands for
\begin{align*}
\frac{1}{\sqrt N}
\int dx_{1}dx_{2}
 v_N(x_{1}-x_{2}) \overline{\phi}(x_{2})a(\shb(x_1, \cdot)) a^*(\sh(x_1, \cdot))a(\chb(x_2, \cdot))
 \end{align*}
Here the contraction $[a(\shb(x_1, \cdot)), a^*(\sh(x_1, \cdot))]= (\shb \circ \, \sh )(x_1, x_1)$
pairs up with $|\phi|^2$ to form $Tr \Gamma$ in the formula for $\tilde h$, and we do not have to estimate it.
The ordered term
\begin{align*}
\frac{1}{\sqrt N}
\int dx_{1}dx_{2}
 v_N(x_{1}-x_{2}) \overline{\phi}(x_{2}) a^*(\sh(x_1, \cdot) a(\shb(x_1, \cdot)))a(\chb(x_2, \cdot))
 \end{align*}
 acts on $F$ as
 \begin{align*}
\frac{1}{\sqrt N}
\int
 v_N(x_{1}-x_{2}) \overline{\phi}(x_{2}) \sh(x_1, x_3) \shb(x_1, z))F(x_2, z, \cdots) dx_1 dx_2 dz
 \end{align*}
 and has operator norm $\lesssim \frac{1}{\sqrt N}$.

 The term \eqref{t6} stands for
 \begin{align*}
\frac{1}{\sqrt N}
\int dx_{1}dx_{2}\overline{\phi}(x_{2}) a^*(\ch(x_1, \cdot))a(\chb(x_1, \cdot)) a^*(\sh(x_2, \cdot))
 \end{align*}
 Here the estimate is not true for the term involving the contraction $[a(\chb(x_1, \cdot)),  a^*(\sh(x_2, \cdot))] = \frac{1}{2} \sht(x_1, x_2)$. This term gets paired up with $\phi(x_1) \phi(x_2)$ to form $\Lambda$, and becomes part of $\tilde h ( \phi)$.
 The remaining ordered term has norm $\lesssim \frac{1}{\sqrt N}$.

All remaining terms have operator norms $\lesssim \frac{1}{\sqrt N}$, as can be easily checked.

\end{proof}

\begin{proposition} \label{p22terms}
The terms $\P$ coming from
\begin{align*}
\frac{1}{ 2N}\Nor \bigg(e^{\B} \V e^{-\B}\bigg) -\frac{1}{2 N} \V
\end{align*}
 have bounded operator
norm  on the first 21 slots of Fock space.
\end{proposition}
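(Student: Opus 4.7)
The plan is to expand $\Nor\bigl(e^{\B}\V e^{-\B}\bigr)-\V$ monomial by monomial by inserting the identities $b_{x}=a_{x}+a(\pb(x,\cdot))+a^{\ast}(\sh(x,\cdot))$ and $b^{\ast}_{x}=a^{\ast}_{x}+a^{\ast}(p(x,\cdot))+a(\shb(x,\cdot))$ into $\frac{1}{2}\int v_{N}(x-y)\,b^{\ast}_{x}b^{\ast}_{y}b_{x}b_{y}\,dxdy$, then normal ordering each resulting monomial via Wick's theorem. Out of the $3^{4}=81$ monomials produced, the unique one in which every $b,b^{\ast}$ is replaced by its bare $a,a^{\ast}$ piece reconstructs $\V$ itself (already in normal order) and thus cancels the subtracted $\tfrac{1}{2N}\V$. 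Each of the remaining $80$ monomials carries at least one ``correction'' kernel from $\{p,\pb,\sh,\shb\}$, and after Wick-reordering it splits into a normal-ordered quartic piece plus quadratic (single contraction) and constant (double contraction) contributions.

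Next I would bound each piece on the first $21$ slots of $\F$ by the standard creation/annihilation estimates $\|a(f)\|_{\F_{k}\to\F_{k-1}}\le\sqrt{k}\,\|f\|_{L^{2}}$ and $\|a^{\ast}(f)\|_{\F_{k}\to\F_{k+1}}\le\sqrt{k+1}\,\|f\|_{L^{2}}$. For a quartic remainder whose kernel has the schematic form $\tfrac{1}{2N}v_{N}(x-y)\prod_{i}K_{i}$ with each $K_{i}\in\{\delta,p,\pb,\sh,\shb\}$ and at least one $K_{i}$ nontrivial, Fubini combined with the $L^{\infty}_{x}L^{2}_{y}$ bounds for $p$ and $\sh$ from Corollary \ref{shest} yields the $L^{2}$ kernel estimate
\[
\tfrac{1}{N}\Bigl\|v_{N}(x-y)\prod_{i}K_{i}\Bigr\|_{L^{2}}\le C\,\tfrac{1}{N}\|v_{N}\|_{L^{2}}\le C\,N^{3\beta/2-1}\le C,
\]
where the last inequality uses precisely the threshold $\beta\le 2/3$. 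For a quadratic contraction term the kernel takes the form $\tfrac{1}{2N}v_{N}(x-y)K(x,y)$ with $K$ a composition of entries of $p,\sh$ (possibly with $\delta$'s from contractions); Schur's test gives
\[
\sup_{x}\int\tfrac{1}{2N}|v_{N}(x-y)K(x,y)|\,dy\le\tfrac{1}{2N}\|v_{N}(x-\cdot)\|_{L^{2}}\sup_{x}\|K(x,\cdot)\|_{L^{2}}\le C,
\]
and symmetrically in $y$, so the resulting quadratic operator has uniformly bounded operator norm. The constant (double contraction) contributions are immediate from the $L^{2}$ bounds on $p,\sh$ together with $\tfrac{1}{N}\|v_{N}\|_{L^{1}}=O(N^{-1})$.

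The main obstacle is purely combinatorial bookkeeping: the $80$ monomials and the additional contraction terms they generate under Wick-reordering must be enumerated and assigned to the appropriate class. However, each such term falls into one of the three classes above, and in each class the necessary estimate reduces to the same two quantitative inputs, namely $\tfrac{1}{N}\|v_{N}\|_{L^{2}}\lesssim 1$ (from $\beta\le 2/3$) and the uniform bounds $\sup_{x}\|p(t,x,\cdot)\|_{L^{2}}+\sup_{x}\|\sh(t,x,\cdot)\|_{L^{2}}\lesssim 1$ from Corollary \ref{shest}. The restriction to the first $21$ slots of $\F$ absorbs the polynomial-in-$k$ factors coming from the creation/annihilation bounds into a final constant, completing the argument in the same spirit as the analysis of $T_{1},\dots,T_{4}$ in Propositions \ref{p1terms} and \ref{p2terms}.
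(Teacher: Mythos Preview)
Your approach is essentially the same as the paper's: both expand the conjugated quartic, use the key input $\tfrac{1}{N}\|v_N\|_{L^2}\lesssim 1$ (precisely the $\beta\le 2/3$ threshold) together with the $L^{\infty}_x L^2_y$ bounds on $p$ and $\sh$ from Corollary~\ref{shest}, and bound each normal-ordered term by a direct kernel estimate. The paper uses the coarser two-way split $b_x=a(\chb(x,\cdot))+a^{\ast}(\sh(x,\cdot))$ (giving $2^4=16$ terms, with $\ch=\delta+p$ expanded only when needed), while you separate out the $\delta$ from the start (giving $3^4=81$ terms), but this is purely organizational.

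One conceptual point deserves correction. You write that after Wick-reordering each of the $80$ monomials ``splits into a normal-ordered quartic piece plus quadratic (single contraction) and constant (double contraction) contributions,'' and then propose to bound the contraction terms separately via Schur. But the operator under consideration is already $\Nor\bigl(e^{\B}\V e^{-\B}\bigr)$: by definition of $\Nor$, only the normal-ordered quartic rearrangement of each monomial survives, and the contractions are \emph{discarded}. (In the paper's bookkeeping, those discarded contractions are precisely the terms that were absorbed earlier into the redefinitions forming $\Lambda$ and $\Gamma$ in the quadratic part of $\H_{red}$; the paper comments on where they go but explicitly does not estimate them here.) So your Schur-test paragraph for quadratics and the remark on double contractions are simply unnecessary --- the task reduces to bounding the $80$ normal-ordered quartic remainders.

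A smaller stylistic point: your phrase ``$L^2$ kernel estimate $\tfrac{1}{N}\|v_N\prod K_i\|_{L^2}\le C$'' is not literally correct when several $K_i$ are $\delta$'s, since the resulting distributional kernel is not in $L^2$ of all its variables. What actually works (and what the paper does case by case) is to integrate out the variables in a suitable order, using Cauchy--Schwarz to extract $\sup_x\|\sh(x,\cdot)\|_{L^2}$ or $\sup_x\|p(x,\cdot)\|_{L^2}$ first and $\tfrac{1}{N}\|v_N\|_{L^2}$ afterwards; this is the content of your ``Fubini combined with $L^{\infty}_x L^2_y$ bounds,'' and it is correct once stated this way.
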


\begin{proof}

We have to estimate the $2^4-1$ terms in
\begin{align*}
\frac{1}{2N}\Nor \left(\int v_N(x_1-x_2) b_{x_1}^* b_{x_2}^* b_{x_1} b_{x_2} dx_1 dx_2-
\frac{1}{2N}\int v_N(x_1-x_2) a_{x_1}^* a_{x_2}^* a_{x_1} a_{x_2} dx_1 dx_2\right),
\end{align*}
During the proof, we will comment on where some of the contraction terms go, but point out we do not need to estimate them.
Recall the formula \eqref{conjf}.

The operators $b_{x}-a_{x}$ and  $b^*_{x}-a^*_{x}$ are  linear combinations of $a(f(x, \cdot)$,  $a^*(f(x, \cdot)$
where  $f$ is one of $\sh$, $p$, or their complex conjugates , and satisfies the estimates
of Lemma \eqref{shest}.

We look at  all possible terms in $ b_{x_1}^* b_{x_2}^* b_{x_1} b_{x_2}$. Schematically,
\begin{subequations}
\begin{align}
a_1a_2a_1a_2\label{start}\\
a^*_1a_2a_1a_2 \label{s2}\\
a_1 a_2^*a_1a_2 \label{s3}\\
a_1^*a_2^*a_1a_2 \label{s4}\\
\cdots \notag\\
a_1^*a_2^*a_1^*a_2 \label{s8}\\
a_1^*a_2^*a_1a_2^* \label{s12}\\
\cdots \notag\\
a_1^*a_2^*a_1^*a_2^* \label{end}
\end{align}
\end{subequations}
and estimate some typical ones.

 The term \eqref{start} means
 \begin{align*}
\frac{1}{2N}\int v_N(x_1-x_2) a(\shb(x_1, \cdot)  a(\shb(x_2, \cdot) a(\chb(x_1, \cdot) a(\chb(x_2, \cdot) dx_1 dx_2
\end{align*}
and this is dual to \eqref{end}
 \begin{align*}
\frac{1}{2N}\int v_N(x_1-x_2) a^*(\ch(x_1, \cdot)  a^*(\ch(x_2, \cdot) a^*(\sh(x_1, \cdot) a^*(\sh(x_2, \cdot) dx_1 dx_2
\end{align*}

The first one acts by integration against, and the second one acts as a (normalized, symmetrized) tensor product with
\begin{align*}
X_4(y_1, y_2, y_3, y_4)=
\frac{1}{ N}\int\chhb(y_{1},x_{1})\chh(x_{2},y_{2})v_{N}(x_{1}-x_{2})\shh(y_{3},x_{1})\shh(x_{2},y_{4})dx_1dx_2
\end{align*}
Treating $\ch(x_1, x_2)=\delta(x_1-x_2) + \p$ we will only include $\delta$ in our calculation since this is always the worst case. With this simplification, the norm of the above operators is dominated by
\begin{align*}
\|\frac{1}{2 N}v_{N}(y_{1}-y_{2})\sh(y_{3},y_{1})\sh(y_{2},y_{4})\|_{L^2(dy_1dy_2dy_3dy_4)}\\
\le
\|\frac{1}{2 N}v_{N}\|_{L^2}\sup_x\|\sh(x, y)\|_{L^2(dy)}\|\sh(x, y)\|_{L^2(dx dy)} \lesssim  1
\end{align*}

Next we consider \eqref{s2} and the dual \eqref{s8}. It suffices to treat just one, say \eqref{s8}.
This term stands for
 \begin{align*}
\frac{1}{2N}\int v_N(x_1-x_2) a^*(\ch(x_1, \cdot)  a^*(\ch(x_2, \cdot) a^*(\sh(x_1, \cdot) a(\chb(x_2, \cdot) dx_1 dx_2
\end{align*}

For simplicity, replacing $\ch(x_1, x_2)$ by $\delta(x_1-x_2) $,  the above term acts on $F(\cdots)$
(actually, the vector$(0,  \cdots, F, 0, \cdots)$
producing
\begin{align*}
G(x_1, x_2, x_3, \cdots)=c\frac{1}{N} v_N(x_1-x_2)\sh(x_1, x_3) F(x_2, \cdots)
\end{align*}
which is easily seen to have $L^2$ norm $\lesssim   \|F\|_{L^2}$.
(first do $L^2(dx_3)$, then $L^2(dx_1)$, leaving $L^2(dx_2 d(\cdots))$ last).
Next we consider \eqref{s3} and the dual \eqref{s12}, written explicitly as
\begin{align*}
 &\frac{1}{2N}\int v_N(x_1-x_2) a(\shb(x_1, \cdot))  a^*(\ch(x_2, \cdot)) a(\chb(x_1, \cdot)) a(\chb(x_2, \cdot)) dx_1 dx_2\\
 &\frac{1}{2N}\int v_N(x_1-x_2) a^*(\ch(x_1, \cdot)  a^*(\ch(x_2, \cdot)) a(\chb(x_1, \cdot)) a^*(\sh(x_2, \cdot)) dx_1 dx_2
\end{align*}
The estimate would not be true for these terms as they stand, but becomes true after putting them in normal order.
For the first one, the contraction
\begin{align*}
 \frac{1}{2N}\big[a(\shb(x_1, \cdot)) , a^*(\ch(x_2, \cdot))\big]=\frac{1}{4N}\shbt(x_1, x_2)
 \end{align*}
gets paired with $\overline\phi(x_1)\phi(x_2)$ from the quadratic term \eqref{badquad} to become $\overline \Lambda(x_1, x_2)$.
We are left with estimating
\begin{align*}
 &\frac{1}{2N}\int v_N(x_1-x_2) a^*(\ch(x_2, \cdot)) a(\shb(x_1, \cdot))  a(\chb(x_1, \cdot)) a(\chb(x_2, \cdot)) dx_1 dx_2
\end{align*}
With the usual simplification $\ch=\delta + \cdots$ this acts on $F$ as
\begin{align*}
 &\frac{1}{2N}\int v_N(x_1-x_2) \shb(x_1, z)) F(x_1, x_2, z \cdots) dx_1 dz
\end{align*}
which is easily seen to have norm $\lesssim   \|F\|_{L^2}$.

For \eqref{s4} we estimate the difference
\begin{align*}
&\frac{1}{2N}\int v_N(x_1-x_2) a^*(\ch(x_1, \cdot)) a^*(\ch(x_2, \cdot))a(\chb(x_1, \cdot)) a(\chb(x_2, \cdot)) dx_1 dx_2\\-
&\frac{1}{2N}\int v_N(x_1-x_2) a_{x_1}^* a_{x_2}^* a_{x_1} a_{x_2} dx_1 dx_2,
\end{align*}
For instance,
\begin{align*}
&\frac{1}{2N}\int v_N(x_1-x_2) a^*(p(x_1, \cdot)) a^*(\ch(x_2, \cdot))a(\chb(x_1, \cdot)) a(\chb(x_2, \cdot)) dx_1 dx_2
\end{align*}
replacing the $\ch$ with $\delta$, acts by
\begin{align*}
&\frac{1}{2N}\int v_N(x_1-x_2) p(x_1, z)F(x_1, x_2) dx_1
\end{align*}
which has $L^2(dx_2 dz)$  norm $\lesssim \|\frac{1}{2N} v_N\|_{L^2} \|p\|_{L^{\infty}L^2}\|F\|_{L^2}
\lesssim \|F\|_{L^2}
$.

All other terms are similar.

\end{proof}

 \begin{lemma} Terms of $\P$ coming from
 \begin{align*}
 \H_{\tilde g} -\H_{1}
+ \Nor \bigg(\I\left(\begin{matrix}
\tilde w^{T}&0
\\
0&-\tilde w
\end{matrix}
\right)\bigg)
\end{align*}
are bounded from the first 21 slots of Fock space to Fock space uniformly in $N$.
 \end{lemma}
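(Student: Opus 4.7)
The plan is to reduce the claim to a uniform $L^2\to L^2$ bound on a kernel. After Wick reordering, each summand in
\begin{align*}
Q := \H_{\tilde g} - \H_1 + \Nor\I\left(\begin{matrix}\tilde w^T & 0 \\ 0 & -\tilde w\end{matrix}\right)
\end{align*}
is already a normal-ordered quadratic operator of the form $\int K(t,x,y)\, a_x^* a_y\, dxdy$, so on the $n$-particle sector its operator norm is at most $n\|K(t,\cdot,\cdot)\|_{L^2\to L^2}$. Since we restrict to $n\le 21$, it suffices to show $\|K(t,\cdot,\cdot)\|_{L^2\to L^2}\lesssim 1$ uniformly in $N$ and $t\in[0,T_0]$.

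First I would assemble $K$ explicitly. The $\H_{\tilde g}-\H_1$ piece contributes $(v_N*\mathrm{tr}\,\Gamma)(t,x)\,\delta(x-y)+v_N(x-y)\Gamma(t,x,y)$, while $\Nor\I(\cdots)$ contributes $-\tilde w(x,y)$ (appropriately symmetrized in $x,y$). Because $\phi,k$ satisfy \eqref{sseq}--\eqref{wweq}, substituting $\tilde\S(\sht) = -(v_N\Lambda)\circ\cht-\chbt\circ(v_N\Lambda)$ and the analogous identity for $\tilde\W(\chbt)$ into the defining formula for $\tilde w$ produces an expression of the schematic form
\begin{align*}
\tilde w = P_1\circ(v_N\Lambda)\circ Q_1 + P_2\circ\overline{(v_N\Lambda)}\circ Q_2 + \cdots,
\end{align*}
where the $P_i,Q_i$ are built from $\sh,\shb$ and $\ch=\delta+p$. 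Thus every occurrence of the singular factor $v_N$ in $\tilde w$ is sandwiched between smoothing kernels.

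Next, for each contribution I would verify the fixed-time $L^2\to L^2$ bound. For the multiplier $(v_N*\mathrm{tr}\,\Gamma)(t,x)$ I use $\|v_N*\mathrm{tr}\,\Gamma\|_{L^\infty_x}\le\|v_N\|_{L^1}\|\mathrm{tr}\,\Gamma\|_{L^\infty_x}\lesssim 1$, where $\|v_N\|_{L^1}=\|v\|_{L^1}$ is uniform and $\mathrm{tr}\,\Gamma = |\phi|^2+\frac{1}{N}(\shb\circ\sh)(x,x)$ is bounded in $L^\infty$ uniformly in $N$ by Corollary \eqref{shest} together with the higher-derivative estimates \eqref{commute}, which propagate enough regularity that $\phi\in L^\infty_t H^{5/2+\epsilon}_x\hookrightarrow L^\infty_{t,x}$ in three dimensions. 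For $v_N(x-y)\Gamma(t,x,y)$, Schur's test yields $\sup_x\int v_N(x-y)|\Gamma(x,y)|dy\le\|v_N\|_{L^1}\|\Gamma\|_{L^\infty_{x,y}}\lesssim 1$, with $\|\Gamma\|_{L^\infty}$ controlled in the same way. For each piece of $\tilde w$, composing the Hilbert--Schmidt $L^2\to L^2$ bounds on $P_i,Q_i$ (via Corollary \eqref{shest}, using $\|\sh\|_{L^2(dxdy)}\lesssim 1$ and $\|p\|_{L^2(dxdy)}\lesssim 1$) with Schur's estimate on $v_N\Lambda$ produces the required bound.

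The main obstacle is that, term by term, the individual kernels appearing in $Q$ are \emph{not} operator-bounded uniformly in $N$ without $L^\infty$-type control on $\Gamma,\Lambda,\phi$; indeed $v_N(x-y)\Gamma(x,y)$ is not Hilbert--Schmidt uniformly in $N$. The argument therefore genuinely requires two ingredients together: the higher-derivative propagation \eqref{commute}, which is what upgrades the $H^{1/2+\epsilon}$ Sobolev data into $L^\infty$ control via embedding above $H^{3/2}$, and the sandwich structure of $\tilde w$ forced by the equations, which ensures that every $v_N$ can be paired with something in $L^\infty$ in the correct variable before Young's inequality is invoked.
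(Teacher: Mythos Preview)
Your treatment of $\H_{\tilde g}-\H_1$ is essentially the paper's: both rely on $\|v_N\|_{L^1}=\|v\|_{L^1}$ together with $\|\Gamma\|_{L^\infty}\lesssim 1$ (established in the paper via \eqref{commute}, \eqref{crucial}) to get an $L^2\to L^2$ bound by Schur.

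The gap is in your handling of $\tilde w$. You write that substituting the equations $\tilde\S(\sht)=\cdots$ and $\tilde\W(\chbt)=\cdots$ into the defining formula for $\tilde w$ yields a sum of sandwich terms $P_i\circ(v_N\Lambda)\circ Q_i$. But the definition of $\tilde w$ involves $\tilde\S(\sh)$ and $\tilde\W(\chb)$, not $\tilde\S(\sht)$ and $\tilde\W(\chbt)$; the equations you cite give no direct information about the former. Moreover, even granting some schematic sandwich structure with $P_i,Q_i$ built from $\sh,\shb,\ch=\delta+p$, you must rule out the case where the $\delta$ part of $\ch$ appears on \emph{both} sides simultaneously, since $\delta\circ(v_N\Lambda)\circ\delta=v_N\Lambda$ has Hilbert--Schmidt norm $\sim N^{3\beta/2}$ and is not uniformly bounded. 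Your proposal does not address either point.

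The paper closes this gap in two steps. First, it uses the algebraic identity forced by $\tilde f=0$ (i.e.\ $X_2=0$) to rewrite $\tilde w$ explicitly as
\[
\tilde w=-\tfrac12\big((\chb)^{-1}\!\circ(v_N\Lambda)\circ\shb-\sh\circ\overline{(v_N\Lambda)}\circ(\chb)^{-1}\big)-\tfrac12\big[\tilde\W(\chb),(\chb)^{-1}\big],
\]
so that the first bracket is a genuine sandwich with $\sh$ or $\shb$ on one side (hence Hilbert--Schmidt via your Schur argument on $v_N\Lambda$). Second, for the residual commutator term, the paper observes from \eqref{wweq} and \eqref{l2} that $\tilde\W(\chbt)\in L^2$ uniformly in $N$, and then invokes the spectral theorem (as in \cite{GMM2}) to pass from $\tilde\W(\chbt)$ to $\tilde\W(\chb)$. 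Both of these ingredients --- the $\tilde f=0$ identity and the spectral-theorem reduction from $\chbt$ to $\chb$ --- are missing from your argument and are what actually make the $\tilde w$ estimate work.
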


 \begin{proof}
Since $\Gamma $ is known to be bounded uniformly in $N$, this is clear for $\H_{\tilde g} -\H_{1}$ which equals\\
$
\int \left(v_N * (Tr \Gamma)(t, x) \delta(x-y)
 +v_N(x-y)\Gamma(t, x, y)\right)a_x^*a_y dx dy
$.
Also, recall
\begin{align*}
&\tilde w:=\big(\tilde \W(\chb)-\sh\circ\overline{(v_N\Lambda)}\big)\circ \chb -\big(\tilde \S(\sh)+\chb\circ ( v_N \Lambda)\big)\circ\shb
\end{align*}
The equation $\tilde f =0$ implies the identity (see section 5 of \cite{GM} for a similar calculation)
we get
\begin{align}
\tilde w(y,x)&=\tilde \W\big(\chb\big)\circ \big(\chb\big)^{-1}-\sh\circ \overline{(v_N \Lambda)}\circ\big(\chb\big)^{-1} \notag\\
&=-\frac{1}{2}\Big(\big(\chb\big)^{-1}\circ(v_N \Lambda)\circ \shb -\sh\circ\overline{(v_N \label{traceterm} \Lambda)}\circ\big(\chb\big)^{-1}\Big)\\
&-\frac{1}{2}\Big[\tilde \W(\chb),\big(\chb\big)^{-1}\Big]\ .\notag
\end{align}
The
equation \eqref{wweq}  together with \eqref{l2} show that
$\tilde \W\big(\chbt\big)$ is in $L^2$ uniformly in $N$. Finally,
the spectral theorem (as in (30) of \cite{GMM2}) is used to express  $\tilde \W \left(\chb \right)$ in terms of
$\tilde \W \left(\chbt\right)$  we see that $\tilde w$ is a Hilbert-Schmidt operator uniformly in $N$.
Its trace
\eqref{traceterm} is part of $X_0$, and
\begin{align*}
 \Nor \bigg(\I\left(\begin{matrix}
\tilde w^{T}&0
\\
0&-\tilde w
\end{matrix}
\right)\bigg)
\end{align*}
is bounded on the first five slots of Fock space uniformly in $N$.
\end{proof}

\end{document}